\documentclass[12pt,a4paper,leqno,fleqn]{amsart}
\usepackage[cp1250]{inputenc}

\usepackage{amsmath}
\usepackage{amsthm}
\usepackage{amssymb}

\usepackage[all]{xy}
\DeclareMathAlphabet{\mathpzc}{OT1}{pzc}{m}{it}

\setlength{\parskip}{0.5cm}

\newcommand{\id}{\operatorname{id}}

\newcommand{\calB}{\mathcal{B}}
\newcommand{\calV}{\mathcal{V}}
\newcommand{\calP}{\mathcal{P}}
\newcommand{\calR}{\mathcal{R}}
\newcommand{\calU}{{\mathcal{U}}}

\newcommand{\cl}{\operatorname{cl}}

\newcommand{\Lim}{\mathit{Lim}}

\newcommand{\calQ}{\mathcal{Q}}

\renewcommand{\epsilon}{\varepsilon}
\renewcommand{\int}{\operatorname{int}}
\renewcommand{\phi}{\varphi}




\newcounter{diagram}
\setcounter{diagram}{0}

\newtheorem{thm}{Theorem}
\newtheorem{pro}[thm]{Proposition}
\newtheorem{lem}[thm]{Lemma}
\newtheorem*{lem*}{Lemma}
\newtheorem*{cor*}{Corollary}
\newtheorem{cor}[thm]{Corollary}

\makeatletter
\@namedef{subjclassname@2020}{
  \textup{2020} Mathematics Subject Classification}
\makeatother
\title{Dimensional types and  $P$-spaces}

\subjclass[2020]{Primary: 54G10, ; Secondary: 54A10, 03E35}
\keywords{$P$-space, $G_\delta$-modification, dimensional type, topological rank, labeled tree}

\author{Wojciech Bielas}
\address{Institute of Mathematics, University of Silesia in Katowice, ul. Bankowa 14, 40-007 Katowice}
\email{wojciech.bielas@us.edu.pl}
\email{andrzej.kucharski@us.edu.pl}
\email{szymon.plewik@us.edu.pl}
\author{Andrzej Kucharski}
\author{Szymon Plewik}
\date{\today}
\sloppy

\begin{document}

\maketitle

\begin{abstract}
We investigate the category of discrete topological spaces, with emphasis on inverse systems of height $\omega_1$.
Their inverse limits belong to the class of $P$-spaces, which allows us to explore dimensional types of these spaces.
\end{abstract}

\section{Introduction}
The purpose of this paper is to discuss the connection between inverse limits of height $\omega_1$, which extend the category of discrete topological spaces.
Results are concentrated on  dimensional types of some $P$-spaces.
The name ``$P$-space'' was used  by L. Gillman and M. Henriksen \cite{gil}.
If a space $X$ is completely regular and every countable intersection of open sets of $X$ is open, then $X$ is called  $P$\textit{-space}.
  A. K. Misra \cite{mis} proposed investigation of $T_1$-spaces which satisfy this last condition and called them $P$\emph{-spaces}, too.
  If $X$ is a  $T_1$-space, then $X$  endowed with the topology generated by all $G_\delta$-sets  is called the $G_\delta$\emph{-modification} of $X$, and it is denoted  by $(X)_\delta$.
  Thus, any $T_1$-space is a $P$-space if and only if it is  its own $G_\delta$-modification.  
Following  M. Fr\'echet \cite{fre}, K. Kuratowski \cite{kur} or W. Sierpi\'nski \cite{sie}, etc., we are convinced that pairs of topological spaces which embed into each other are interesting in themselves.
This relation appeared under various names:  \textit{topological rank}, see \cite[p. 112]{kur}; \textit{dimensional type}, see \cite[p. 130]{sie}, etc.
If $X$ is homeomorphic to a subspace of $Y$, i.e. $Y$ contains a homeomorphic copy of $X$, in short $X\subset_hY$, then  the dimensional type of $X$ is less or equal to the dimensional type of $Y$.
If  $X\subset_hY$ and $Y\subset_hX$, then we  write $X=_hY$.
If $X\subset_hY$ and $Y$ does not contain a homeomorphic copy of $X$, then $X$ has smaller dimensional type than $Y$. 
In the paper \cite{com}, the relation ``$\subset_h$'' was called ``topological inclusion relation'' and was used to examine topological arrow relation of the form $X\to (Y)^1_2$.
One of the results of this paper, which improves some results from \cite{dow}, is $(2^{\omega_1})_\delta\to (\mathbf{\Sigma })^1_2$, see Section \ref{sec:7}.

We use the standard notation and terminology of \cite{kur}, \cite{jec}, or \cite{eng}, as well as of papers \cite{kun} and \cite{dow} with some minor changes.
A dense in itself space is called  a \textit{crowded} space.
A \textit{partition} is a cover consisting of pairwise disjoint open sets, hence elements of a partition are clopen sets, here a \textit{clopen} set means a closed and open set.
We refer the readers to the book \cite[pp. 98--104]{eng} for details about  limits of  inverse systems.
We use inverse systems $\{X_\alpha,\pi^\alpha_\beta,\omega_1\}$, where each  $X_\alpha$ is a  discrete space.

The paper is organised as follows.
In Section \ref{sec:2} we  briefly sketch some  facts about $P$-spaces.
For the description of inverse systems, see   \cite{eng}, but notions related to trees are taken from \cite{jec}.
In Sections \ref{sec:3} and \ref{sec:4}, we discuss the $G_\delta$-modifications of Cantor cubes and its subspaces, where we show that the Baire number varies among them.
Theorem \ref{thm:14} says that, under the Continuum Hypothesis, any two dense subspaces of $(2^{\omega_1})_\delta$ of cardinality $\frak{c}$ are homeomorphic.
In Sections \ref{sec:5}--\ref{sec:7}, we investigate  $P$-spaces of cardinality $\omega_1$, $P$-spaces of weight $\omega_1$ and $P$-spaces of cardinality and weight $\omega_1$.
Any $P$-space of weight $\omega_1$ can be embedded into $(2^{\omega_1})_\delta$, hence the space $(2^{\omega_1})_\delta$ has the greatest dimensional type in the class of all $P$-spaces of weight $\omega_1$, whenever the Continuum Hypothesis
is assumed.
In Section \ref{sec:8}, we introduce the notion of a $\lambda$-thin labeling and prove that any  two $P$-spaces which have a $\lambda$-thin labeling are homeomorphic.
Also we prove that if a $P$-space $X$ has an $\omega$-thin labeling (or an $\omega_1$-thin labeling), then $X$ has the smallest dimensional type in the class of all crowded $P$-spaces of weight $\omega_1$.
Finally, we give a few remarks about rigid Lindel\"of $P$-spaces.

\section{$P$-spaces, inverse limits and trees}\label{sec:2}

Before proceeding, let us note the following observation.

\begin{pro}\label{pro:3}
Any regular $P$-space is  $0$-dimensional.
\end{pro}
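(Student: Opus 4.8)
The plan is to show that every point of $X$ has a neighborhood base consisting of clopen sets, which is exactly the assertion $\operatorname{ind} X = 0$. Two ingredients drive the argument: regularity, which lets us insert a closure between a point and an open neighborhood, and the defining property of a $P$-space, which guarantees that a countable intersection of open sets is again open. The whole difficulty is to manufacture, from these, a single set that is simultaneously open and closed and squeezed between $x$ and a given neighborhood.

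First I would fix a point $x \in X$ and an open set $U$ with $x \in U$. Applying regularity repeatedly, I would construct a decreasing sequence of open sets $V_1 \supseteq V_2 \supseteq \cdots$, all containing $x$, with $x \in V_1 \subseteq \cl V_1 \subseteq U$ and, inductively, $x \in V_{n+1} \subseteq \cl V_{n+1} \subseteq V_n$ for every $n$. The key structural consequence of the inclusions $\cl V_{n+1} \subseteq V_n$ is that the open core and the closed core of the sequence coincide, that is
$$W := \bigcap_{n} V_n = \bigcap_{n} \cl V_n.$$

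Now I would simply read off that $W$ is clopen. On the one hand, $W$ is a countable intersection of open sets, hence a $G_\delta$-set; since $X$ is a $P$-space, $W$ is open. On the other hand, $W = \bigcap_n \cl V_n$ is an intersection of closed sets, hence closed. Therefore $W$ is clopen, and by construction $x \in W \subseteq \cl V_1 \subseteq U$.

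Since $x$ and $U$ were arbitrary, every point admits arbitrarily small clopen neighborhoods, so the clopen sets form a base for $X$, and $X$ is $0$-dimensional. I do not anticipate a serious obstacle: the only step needing genuine care is the identity $\bigcap_n V_n = \bigcap_n \cl V_n$, which is what makes the $G_\delta$-set simultaneously closed, and this identity is exactly where the nesting $\cl V_{n+1} \subseteq V_n$ supplied by regularity is used. Everything else is a routine combination of the regularity separation property and the $P$-space hypothesis.
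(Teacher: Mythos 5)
Your proof is correct and follows essentially the same route as the paper: build a regularity-nested sequence of open sets around $x$ inside $U$, observe that $\bigcap_n V_n = \bigcap_n \cl V_n$ is open by the $P$-space property and closed as an intersection of closures, hence clopen. The only difference is presentational — you spell out the induction and the identity of the two intersections, which the paper leaves implicit.
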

\begin{proof}
Fix a regular $P$-space $X$ and  $x\in V\subseteq X$, where $V$ is an open set.
By regularity of $X$, there exists a sequence $(U_n)$ of open sets such that $x\in \cl U_{n}\subseteq U_{n-1}.$
  The set
$$V^* = \bigcap_n \cl U_n = \bigcap_n U_n \subseteq V$$
is clopen and $x\in V^*\subseteq V$.
Therefore, $X$ has a base consisting of clopen sets.
\end{proof}

The family of all  clopen sets of a regular $P$-space is a $\sigma$-algebra.
For these reasons, from now on  we assume that a $P$-space is completely regular.

\begin{pro}
If $X$ is a  $P$-space and  a clopen subset $U\subseteq X$ has a limit point, then $U$ contains uncountably many pairwise disjoint clopen subsets.
\end{pro}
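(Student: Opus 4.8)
The plan is to fix a limit point $x$ of $U$ and build, by transfinite recursion of length $\omega_1$, a decreasing family of clopen neighbourhoods of $x$, peeling off one clopen piece at each step. Since $U$ is clopen it is in particular closed, so $x\in U$. By Proposition \ref{pro:3} the space $X$ is $0$-dimensional, hence it has a base of clopen sets; being Hausdorff, any two distinct points are separated by a clopen set. The invariant I would maintain is that $W_\alpha$ is a clopen neighbourhood of $x$ with $W_\alpha\subseteq U$, and that $\alpha<\beta$ implies $W_\beta\subseteq W_\alpha$.

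Start with $W_0=U$. At a successor step, because $x$ is a limit point of $U$ and $W_\alpha$ is an open neighbourhood of $x$, one may choose $y_\alpha\in W_\alpha\cap(U\setminus\{x\})$. Using $0$-dimensionality, pick a clopen set $D_\alpha$ with $y_\alpha\in D_\alpha\subseteq W_\alpha$ and $x\notin D_\alpha$; put $C_\alpha=D_\alpha$ and $W_{\alpha+1}=W_\alpha\setminus D_\alpha$. As $D_\alpha$ is clopen and $x\notin D_\alpha$, the set $W_{\alpha+1}$ is again a clopen neighbourhood of $x$ contained in $U$. At a limit step $\lambda<\omega_1$ set $W_\lambda=\bigcap_{\alpha<\lambda}W_\alpha$. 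This is the crucial point: $\lambda$ is countable, so $W_\lambda$ is a countable intersection of clopen sets, hence closed automatically and \emph{open precisely because $X$ is a $P$-space}. Thus $W_\lambda$ is a clopen neighbourhood of $x$, and since $x$ is still a limit point of $U$, the open set $W_\lambda$ meets $U\setminus\{x\}$, so the recursion never collapses to $\{x\}$ and can be continued.

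To finish, observe that $\{C_\alpha:\alpha<\omega_1\}$ consists of nonempty clopen subsets of $U$ (each $C_\alpha\subseteq W_\alpha\subseteq U$) that are pairwise disjoint: if $\alpha<\beta$ then $C_\beta\subseteq W_\beta\subseteq W_{\alpha+1}=W_\alpha\setminus C_\alpha$, whence $C_\alpha\cap C_\beta=\emptyset$. This produces $\omega_1$-many, and therefore uncountably many, pairwise disjoint clopen subsets of $U$.

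I expect the only genuine obstacle to be the limit stage of the recursion, where one must guarantee that the intersection of the previously constructed clopen neighbourhoods is still an \emph{open} neighbourhood of $x$ rather than degenerating to a set with empty interior (which is exactly what can happen in a space that is not a $P$-space). The $P$-space hypothesis, which turns countable intersections of open sets into open sets, is indispensable here, and it is what allows the construction to run all the way through to $\omega_1$.
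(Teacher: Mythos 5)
Your proof is correct and takes essentially the same route as the paper's: both build a strictly decreasing $\omega_1$-sequence of clopen neighbourhoods of the limit point inside $U$ (with the $P$-space property guaranteeing openness at countable limit stages) and harvest the pairwise disjoint clopen differences. Your version simply makes explicit the transfinite recursion that the paper compresses into the phrase ``choose a strictly decreasing sequence'' from an uncountable clopen base at the limit point.
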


\begin{proof}
  Let $U\subseteq X$ be a clopen set with a limit point $y\in U$.
By Proposition \ref{pro:3}, there exists an uncountable base $\calB$ at $y$ consisting of clopen subsets.
Choose a strictly decreasing sequence
$$\{V_\alpha\subseteq U\colon\alpha<\omega_1\}\subseteq\calB.$$
The family $\{V_\alpha\setminus V_{\alpha+1}\colon \alpha<\omega_1\}$ is as desired.
\end{proof}


\begin{pro}\label{cor:5}
In a  $P$-space, any countable family consisting of open covers has a common refinement.
\end{pro}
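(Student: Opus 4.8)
The plan is to build the common refinement pointwise, exploiting the single defining feature of a $P$-space that we need here: a countable intersection of open sets remains open. So the whole argument reduces to choosing, for each point, one member from each given cover and intersecting these choices.

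First I would fix the countable family of open covers, say $\{\calU_n : n<\omega\}$, each $\calU_n$ an open cover of $X$. For a fixed point $x\in X$ and each $n<\omega$, since $\calU_n$ covers $X$ I can select a member $U^x_n\in\calU_n$ with $x\in U^x_n$; for each $x$ this is merely a selection along the countable index set $\omega$, so no heavy choice principle is involved.

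Next I would set $W_x=\bigcap_{n<\omega}U^x_n$. This is a countable intersection of open sets, hence open because $X$ is a $P$-space, and plainly $x\in W_x$. Therefore the family $\calV=\{W_x:x\in X\}$ is an open cover of $X$, since every point $x$ lies in its own $W_x$. Finally, for each $n$ the inclusion $W_x\subseteq U^x_n\in\calU_n$ shows that every member of $\calV$ is contained in some member of $\calU_n$, simultaneously for all $n$; that is exactly the statement that $\calV$ is a common refinement of the whole family.

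I do not anticipate a genuine obstacle. The entire content is the interchange of a countable intersection with openness, which is precisely what the $P$-space hypothesis supplies via Proposition \ref{pro:3}'s ambient definition. The only thing requiring care is the bookkeeping: performing the selection $U^x_n$ for every $x$ and every $n$, and then checking both that $\calV$ covers $X$ and that it refines each $\calU_n$. Both verifications are immediate from the construction, so the proof should be short.
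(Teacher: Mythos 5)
Your proof is correct and follows essentially the same route as the paper: for each point, select one member from each cover and take the countable intersection, which is open by the $P$-space property. The only cosmetic difference is that the paper first passes to clopen refinements (using Proposition \ref{pro:3}), which is unnecessary for the statement itself and which your direct argument rightly skips.
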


\begin{proof}
It suffices to consider a family  $\{\calP_n\colon n<\omega\}$ of covers, each one consists of  clopen sets.
  For a point $x$, choose  $V_{n,x}\in\calP_n$ such that $x\in V_{n,x}$.
  The intersection
  $$V_x=\bigcap\{V_{n,x}\colon n<\omega\}$$
  is a clopen set, so the family of  all $V_x$ is a desired refinement.
\end{proof}

  Let $\{X_\alpha\colon \alpha<\omega_1\}$ be a family of discrete spaces.
The $G_\delta$-modification  $(\prod_{\alpha<\omega_1}X_\alpha)_\delta$ of a product   $\prod_{\alpha<\omega_1}X_\alpha$ with the Tychonoff topology has a base
  $$\{[f]\colon f\in \prod_{\beta<\alpha}X_\beta\mbox{ and } \alpha<\omega_1\},\mbox{ where }[f]=\{g\in\prod_{\alpha<\omega_1}X_\alpha\colon f\subseteq g\}.$$
Assume that there are given bonding maps  $\pi^\alpha_\beta\colon X_\alpha\to X_\beta$ such that
$$\gamma<\beta<\alpha<\omega_1\mbox{ implies }\pi^\beta_\gamma\circ \pi^\alpha_\beta=\pi^\alpha_\gamma.$$
We have an inverse system $\mathbb{P}=\{X_\alpha,\pi^\alpha_\beta,\omega_1\}$ and the inverse limit $\varprojlim\mathbb{P}$.
By \cite[Proposition 2.5.5.]{eng}, the inverse limit $\varprojlim\mathbb{P}$ is a $P$-space.
Given an inverse system $\mathbb{P}=\{X_\alpha,\pi^\alpha_\beta,\omega_1\}$, we would like to enrich it by injections $\iota^\alpha_\beta \colon X_\beta\to X_\alpha$, for each $\beta<\alpha$, such that
\begin{itemize}
\item[(a)] $\iota^\alpha_\beta\circ\iota^\beta_\gamma=\iota^\alpha_\gamma$, where $\gamma<\beta<\alpha<\omega_1$;
\item[(b)]  $\pi^\alpha_\beta\circ \iota^\alpha_\beta=\id_{X_\beta}$, where $\beta<\alpha<\omega_1$,
\end{itemize}
see Diagram \ref{fig:1}.

\begin{figure}[h]
\centering
$$\xymatrix{
	X_\gamma\ar@/^2.5pc/@{--{>}}[rrrr]^{\iota^\alpha_\gamma} \ar@<1ex>@{--{>}}[rr]^{\iota^\beta_\gamma} && X_\beta\ar[ll]^{\pi^\beta_\gamma}\ar@<1ex>@{--{>}}[rr]^{\iota^\alpha_\beta}&& X_\alpha\ar[ll]^{\pi^\alpha_\beta}\ar@/^2pc/[llll]^{\pi^\alpha_\gamma}.
      }$$
      \caption{}      \label{fig:1}
      \end{figure}

      Given commutative diagrams, as Diagram \ref{fig:1} with $\gamma\leq\beta\leq\alpha<\omega_1$,  we get an enriched inverse system $\mathbb{P}=\{X_\alpha,\pi^\alpha_\beta,\iota^\alpha_\beta,\omega_1\}$.
      We use the same symbol as for the system $\{X_\alpha,\pi^\alpha_\beta,\omega_1\}$, since both of them gives the same inverse limit.
      But the injections allow us to define the following  $P$-spaces.
Let $$\mathbf{\Sigma}_\mathbb{P}=\{(p_\alpha)\in\varprojlim\mathbb{P}\colon\exists_{\gamma<\omega_1}\forall_{\beta>\gamma}\;p_\beta=\iota^\beta_\gamma(p_\gamma)\}\subseteq\varprojlim\mathbb{P} \subseteq\prod_{\alpha<\omega_1}X_\alpha$$
and let $\mathbf{\Sigma}^{\mathbb{P}}$ be a subspace of $\mathbf{\Sigma}_{\mathbb{P}}$, which consists of threads $(p_\alpha)\in \mathbf{\Sigma}_{\mathbb{P}}$ such that if $\alpha$ is an infinite limit ordinal, then there exists $\beta<\alpha$ such that $p_\alpha=\iota^\alpha_\beta(p_\beta)$. 
We say that a thread $(p_\alpha)\in\varprojlim\mathbb{P}$ has a \textit{jump} at an ordinal $\beta$, whenever  $\iota^{\beta+1}_\beta(p_\beta)\neq p_{\beta+1}$.

\begin{lem}\label{lem:I5}
Any thread in $\mathbf{\Sigma}^{\mathbb{P}}$ has finitely many jumps.
\end{lem}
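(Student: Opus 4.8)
The plan is to argue by contradiction. Suppose a thread $(p_\alpha)\in\mathbf{\Sigma}^{\mathbb{P}}$ has infinitely many jumps, and let $J=\{\beta<\omega_1:\iota^{\beta+1}_\beta(p_\beta)\neq p_{\beta+1}\}$ be the (infinite) set of ordinals at which it jumps. Being a set of ordinals, $J$ is well-ordered, so it contains a strictly increasing sequence $\beta_0<\beta_1<\beta_2<\cdots$; put $\alpha=\sup_n\beta_n$. As the supremum of a strictly increasing $\omega$-sequence, $\alpha$ is a countable \emph{infinite limit} ordinal, with $\beta_n<\alpha$ for every $n$. The underlying idea is that the coherence forced at the limit level $\alpha$ by membership in $\mathbf{\Sigma}^{\mathbb{P}}$ cannot coexist with jumps accumulating at $\alpha$.

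Next I would invoke the defining property of $\mathbf{\Sigma}^{\mathbb{P}}$: since $\alpha$ is an infinite limit ordinal, there is some $\delta<\alpha$ with $p_\alpha=\iota^\alpha_\delta(p_\delta)$. The key computation is to propagate this equality downward to every coordinate between $\delta$ and $\alpha$. For $\delta\le\beta<\alpha$, applying $\pi^\alpha_\beta$ and using the factorization $\iota^\alpha_\delta=\iota^\alpha_\beta\cmp\iota^\beta_\delta$ from (a) together with $\pi^\alpha_\beta\cmp\iota^\alpha_\beta=\id_{X_\beta}$ from (b), the thread condition $p_\beta=\pi^\alpha_\beta(p_\alpha)$ yields
$$p_\beta=\pi^\alpha_\beta\bigl(\iota^\alpha_\delta(p_\delta)\bigr)=\iota^\beta_\delta(p_\delta),$$
where for $\beta=\delta$ the right-hand side is read as $p_\delta$ itself.

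From this I would read off that \emph{no} jump occurs in the interval $[\delta,\alpha)$. Indeed, fix $\beta$ with $\delta\le\beta<\alpha$; since $\alpha$ is a limit ordinal we also have $\beta+1<\alpha$, so both $p_\beta=\iota^\beta_\delta(p_\delta)$ and $p_{\beta+1}=\iota^{\beta+1}_\delta(p_\delta)$ hold by the previous step. Using $\iota^{\beta+1}_\delta=\iota^{\beta+1}_\beta\cmp\iota^\beta_\delta$ from (a) then gives $p_{\beta+1}=\iota^{\beta+1}_\beta(p_\beta)$, which says precisely that there is no jump at $\beta$.

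Finally, because $\delta<\alpha=\sup_n\beta_n$, some term satisfies $\delta<\beta_n<\alpha$, so $\beta_n\in[\delta,\alpha)$; yet $\beta_n\in J$ is a jump, contradicting the previous paragraph. Hence $J$ is finite. I expect the only delicate points to be pure ordinal bookkeeping—ensuring $\alpha$ is genuinely a limit (so that $\beta+1<\alpha$, which rules out a jump occurring exactly at the top level) and that the accumulation $\beta_n\to\alpha$ forces a jump strictly inside $[\delta,\alpha)$—rather than anything topological, since the behaviour of the maps is fully governed by the compatibility conditions (a) and (b).
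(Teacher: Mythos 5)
Your proof is correct and follows essentially the same route as the paper: both take a limit ordinal that is the supremum of infinitely many jumps and derive a contradiction with the defining property of $\mathbf{\Sigma}^{\mathbb{P}}$. Your write-up merely makes explicit the computation, via conditions (a) and (b), that the paper compresses into the single assertion $\iota^\gamma_\beta(p_\beta)\neq p_\gamma$ for all $\beta<\gamma$.
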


\begin{proof}
Suppose that a thread $(p_\alpha)$ belongs to $\mathbf{\Sigma}^{\mathbb{P}}$.
If there exists a limit ordinal $\gamma$ which is a supremum of infinitely many jumps for this thread, then    $\iota^\gamma_\beta(p_\beta)\neq p_\gamma$             for any $\beta<\gamma$; a contradiction with $(p_\alpha)\in\mathbf{\Sigma}^\mathbb{P}$.
\end{proof}

Let $\mathbb{Q}= \{Q_\alpha,\pi^\alpha_\beta,\iota^\alpha_\beta,\omega_1\}$ and $\mathbb{R}=\{R_\alpha,r^\alpha_\beta,e^\alpha_\beta, \omega_1\}$ be enriched inverse sequences.

\begin{figure}[h]
    $$\xymatrix{
	Q_\beta \ar@<1ex>@{-->}[rr]^{\iota^\alpha_\beta} \ar@{-->}[d]_{s_\beta} && Q_\alpha\ar[ll]^{\pi^\alpha_\beta} \ar@{-->}[d]^{s_\alpha} \\
	R_\beta \ar@<1ex>@{-->}[rr]^{e^\alpha_\beta}      && R_\alpha\ar[ll]^{r^\alpha_\beta}
      }$$\caption{}\label{fig:2}
      \end{figure}\bigskip
      
\begin{lem}\label{lem:I}
Let $s_\alpha\colon Q_\alpha \to R_\alpha$ be one-to-one functions   such that Diagram \ref{fig:2} is commutative, whenever $\beta<\alpha<\omega_1$, then
$$\Sigma_{\mathbb{Q}}\subset_h \Sigma_{\mathbb{R}}\mbox{ and }\Sigma^{\mathbb{Q}}\subset_h \Sigma^\mathbb{R},\mbox{  and }\varprojlim\mathbb{Q}\subset_h \varprojlim\mathbb{R}.$$
  Moreover,  if each $s_\alpha$ is a bijection, then 
  $\Sigma_{\mathbb{Q}}$ is homeomorphic to $\Sigma_{\mathbb{R}}$ and   $\Sigma^{\mathbb{Q}}$ is homeomorphic to $\Sigma^\mathbb{R}$, and  $\varprojlim\mathbb{Q}$ is homeomorphic to $\varprojlim\mathbb{R}$.
\end{lem}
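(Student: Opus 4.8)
The plan is to define a single coordinatewise map and verify that it restricts correctly to all three pairs of spaces. Put $S((p_\alpha)) = (s_\alpha(p_\alpha))$ for $(p_\alpha)\in\varprojlim\mathbb{Q}\subseteq\prod_{\alpha<\omega_1}Q_\alpha$. First I would check that $S$ lands in $\varprojlim\mathbb{R}$: if $(p_\alpha)$ is a thread, then for $\beta<\alpha$ the commutativity of Diagram \ref{fig:2} gives $r^\alpha_\beta\circ s_\alpha = s_\beta\circ\pi^\alpha_\beta$, so $r^\alpha_\beta(s_\alpha(p_\alpha)) = s_\beta(\pi^\alpha_\beta(p_\alpha)) = s_\beta(p_\beta)$ and $(s_\alpha(p_\alpha))$ is again a thread. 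Injectivity of $S$ is immediate from the injectivity of each $s_\alpha$: if $S((p_\alpha)) = S((q_\alpha))$, then $s_\alpha(p_\alpha)=s_\alpha(q_\alpha)$, whence $p_\alpha=q_\alpha$ for every $\alpha$.

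The main work is to show that $S$ is a homeomorphism onto its image, and here I would use the description of the basic clopen sets $[f]$ given before the lemma, where $f$ ranges over $\prod_{\beta<\alpha}R_\beta$ with $\alpha<\omega_1$. For continuity, fix such an $[h]$ and consider its preimage intersected with $\varprojlim\mathbb{Q}$; a thread $(p_\gamma)$ lies in it exactly when $s_\beta(p_\beta)=h_\beta$ for all $\beta<\alpha$. Since each $s_\beta$ is one-to-one, either some $h_\beta$ fails to be in the range of $s_\beta$, so the set is empty, or the values $p_\beta=s_\beta^{-1}(h_\beta)$ are uniquely determined; in the latter case the commutativity shows that $(s_\beta^{-1}(h_\beta))_{\beta<\alpha}$ is itself a thread over $Q$, so $S^{-1}([h])$ is a basic clopen set. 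For openness onto the image I would run the computation in the other direction: the image $S([f]\cap\varprojlim\mathbb{Q})$ is cut out by the conditions $q_\beta=s_\beta(f_\beta)$ for $\beta<\alpha$, that is, it equals $[h]\cap S(\varprojlim\mathbb{Q})$ with $h_\beta=s_\beta(f_\beta)$. Thus $S$ is a continuous open bijection onto $S(\varprojlim\mathbb{Q})$, i.e. an embedding, which already yields $\varprojlim\mathbb{Q}\subset_h\varprojlim\mathbb{R}$. I expect this topological step, in particular the verification that injectivity forces preimages and images of basic sets to be basic, to be the only point needing real care.

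It remains to see that $S$ carries $\Sigma_{\mathbb{Q}}$ into $\Sigma_{\mathbb{R}}$ and $\Sigma^{\mathbb{Q}}$ into $\Sigma^{\mathbb{R}}$; then the two further inclusions follow because a restriction of an embedding is an embedding. For $\Sigma_{\mathbb{Q}}$, suppose $(p_\alpha)$ has a witness $\gamma$, so $p_\beta=\iota^\beta_\gamma(p_\gamma)$ for all $\beta>\gamma$. The commutativity of Diagram \ref{fig:2} gives $s_\beta\circ\iota^\beta_\gamma = e^\beta_\gamma\circ s_\gamma$, hence $s_\beta(p_\beta)=e^\beta_\gamma(s_\gamma(p_\gamma))$, so the image thread has the same witness $\gamma$. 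The argument for $\Sigma^{\mathbb{Q}}$ is identical, applied at each infinite limit ordinal $\alpha$: from $p_\alpha=\iota^\alpha_\beta(p_\beta)$ one gets $s_\alpha(p_\alpha)=e^\alpha_\beta(s_\beta(p_\beta))$.

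Finally, for the ``moreover'' clause, assume every $s_\alpha$ is a bijection. Then $S$ is onto $\varprojlim\mathbb{R}$: given a thread $(q_\alpha)$, the coordinates $p_\alpha=s_\alpha^{-1}(q_\alpha)$ form a thread over $Q$ by the same commutativity argument used for well-definedness, and $S((p_\alpha))=(q_\alpha)$. Moreover each $s_\alpha^{-1}$ makes the reversed diagram commute, so the inverse map $S^{-1}$ carries $\Sigma_{\mathbb{R}}$ into $\Sigma_{\mathbb{Q}}$ and $\Sigma^{\mathbb{R}}$ into $\Sigma^{\mathbb{Q}}$ by the argument of the previous paragraph. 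Hence $S(\Sigma_{\mathbb{Q}})=\Sigma_{\mathbb{R}}$ and $S(\Sigma^{\mathbb{Q}})=\Sigma^{\mathbb{R}}$, and since $S$ is already an embedding these become the desired homeomorphisms.
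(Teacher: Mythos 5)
Your proposal is correct and takes essentially the same approach as the paper: the paper's (very brief) proof uses exactly the same coordinatewise map $(u_\alpha)\mapsto(s_\alpha(u_\alpha))$, asserts it is an embedding of $\varprojlim\mathbb{Q}$ into $\varprojlim\mathbb{R}$ whose restrictions give the embeddings of $\Sigma_{\mathbb{Q}}$ and $\Sigma^{\mathbb{Q}}$, and notes these become homeomorphisms when the $s_\alpha$ are bijections. You have simply written out the verifications (thread preservation, the action on basic clopen sets $[f]$, and compatibility with the injections $\iota^\alpha_\beta$, $e^\alpha_\beta$) that the paper leaves to the reader.
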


\begin{proof}
If $(u_\alpha)\in\varprojlim\mathbb{Q}$, then the formula $(u_\alpha)\mapsto (s_\alpha(u_\alpha))$ defines an embedding from $\varprojlim\mathbb{Q}$ to $\varprojlim\mathbb{R}$.
The  restrictions of this embedding give embeddings $\Sigma_{\mathbb{Q}}\to \Sigma_\mathbb{R}$ and $\Sigma^{\mathbb{Q}}\to \Sigma^\mathbb{R}$.
But if all $s_\alpha$ are bijections, then these embeddings are homeomorphisms.
\end{proof}

Any inverse system $\mathbb{P}=\{X_\alpha,\pi^\alpha_\beta,\omega_1\}$ can be interpreted as a tree of height $\omega_1$, we refer the readers for basic notions about trees to the book \cite{jec}.
Namely, assume that the sets $X_\alpha$ are pairwise disjoint.
Let
$$T=\bigcup\{X_\alpha\colon\alpha<\omega_1\}$$
and we put $x\leq y$, whenever $x\in X_\alpha$ and $y\in X_\beta$,  and  $x=\pi^\alpha_\beta(y)$.
Let $[T]$ be the family of all branches of length $\omega_1$.
If $A\in[T]$,  then $A=\{p_\alpha\colon p_\alpha\in X_\alpha\mbox{ and }\alpha<\omega_1\}$ and  $(p_\alpha)\in\varprojlim\mathbb{P}$.
So, the mapping $A\mapsto (p_\alpha)$ is a bijection between $[T]$ and $\varprojlim\mathbb{P}$, which is also a homeomorphism, whenever $[T]$ is endowed with the topology generated by the family
$$\{\{b\in [T]\colon x\in b\}\colon x\in T\}.$$
Some authors use a notion \textit{tree topology} for the topology just defined on $[T]$, compare \cite[p. 14]{rud}.

The interpretation of an inverse limit as  a tree which yields a topological space,  consisting of branches of length $\omega_1$, leads us to the notion of labeling.
Namely, a surjection $E\colon T\to Y\subseteq [T]$ is called \textit{labeling}, if for every $x,y\in T$ we have $x\in E(x)$ and the following implication holds:
$$x\leq y\mbox{ and }y\in E(x) \Rightarrow E(x)=E(y).$$

If $E\colon T\to Y\subseteq [T]$ is a labeling, then $(T,\leq,E)$ is a \textit{labelled tree},  which corresponds to the enriched inverse system $\{X_\alpha,\pi^\alpha_\beta,\iota^\alpha_\beta, \omega_1\}$, where 
$X_\alpha$ is the $\alpha$-th level of $T$, and $\pi^\alpha_\beta\colon X_\alpha\to X_\beta$ is such that $\pi^\alpha_\beta(x)\leq x$ for every $x\in X_\alpha$.
The injections $\iota^\alpha_\beta$ are defined as follows.
If $\alpha<\beta$ and $x\in X_\alpha$, then $\iota^\beta_\alpha(x)$ is the unique element of $E(x)\cap X_\beta$.
Finally $Y=\{E(x)\colon x\in T\}$. 

\section{The $G_\delta$-modifications of the Cantor cubes}\label{sec:3}

For an infinite cardinal $\kappa$, let $2^{\kappa}$ denote the Cantor cube with the product topology.
Thus  $(2^{\kappa})_\delta$ is the $G_\delta$-modification of the Cantor cube $2^{\kappa}$.
Recall that if $f\colon A\to \{0,1\}$ is a function and $A\subseteq \kappa$, then  $[f]$ denotes the family of all extensions of $f$ with  the domain  $\kappa$ and values in $\{0,1\}$.
The next lemma is well-known, for example for  readers  familiar with box products.
\begin{lem}\label{lem:9}
If $\kappa$ is an infinite cardinal, then the family
$$\{[f]\colon f\in 2^A\mbox{ and } A\in [\kappa]^{\omega}\}$$
is a base for  $(2^{\kappa})_\delta$.\qed
\end{lem}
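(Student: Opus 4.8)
The plan is to check the two defining properties of a base directly against the definition of the $G_\delta$-modification. First I would record the useful preliminary fact that the family of all $G_\delta$-sets of $2^\kappa$ is closed under finite intersections and covers $2^\kappa$ (as $2^\kappa$ is itself a $G_\delta$-set), so this family already constitutes a base for $(2^\kappa)_\delta$. This reduces the lemma to two claims: first, that every set $[f]$ with $f\in 2^A$ and $A\in[\kappa]^\omega$ is a $G_\delta$-set of $2^\kappa$, hence open in $(2^\kappa)_\delta$; and second, that for every $G_\delta$-set $G$ and every point $x\in G$ there is a set $[f]$ of the listed form with $x\in[f]\subseteq G$.

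For the first claim I would simply write
$$[f]=\bigcap_{\alpha\in A}\{g\in 2^\kappa\colon g(\alpha)=f(\alpha)\}.$$
Each set appearing in this intersection is clopen in the product topology on $2^\kappa$, and $A$ is countable, so $[f]$ is a countable intersection of open sets, that is, a $G_\delta$-set.

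For the second claim I would use a representation $G=\bigcap_{n<\omega}U_n$ with each $U_n$ open in $2^\kappa$. Since $x\in U_n$ and the product topology has a base of cylinders with finite support, for each $n$ there is a finite $F_n\subseteq\kappa$ with $x\in[x\upharpoonright F_n]\subseteq U_n$. Putting $A=\bigcup_{n<\omega}F_n$ yields a countable set; if it happens to be finite I would enlarge it to a set in $[\kappa]^\omega$, which is possible because $\kappa$ is infinite. Then $f=x\upharpoonright A$ satisfies $x\in[f]$, and since $F_n\subseteq A$ we have $[f]\subseteq[x\upharpoonright F_n]\subseteq U_n$ for every $n$, so $[f]\subseteq\bigcap_{n<\omega}U_n=G$, as required.

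The argument is routine, and no step is a genuine obstacle; the only point deserving care is the amalgamation of the countably many finite supports $F_n$ into a single countable support $A$, which works precisely because a countable union of finite sets is countable. I would also be careful not to skip the opening observation that the $G_\delta$-sets themselves form a base, since it is exactly this that lets the verification in the second claim be carried out against a single $G_\delta$-set rather than against an arbitrary union of them.
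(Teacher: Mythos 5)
Your proof is correct. The paper offers no argument for this lemma at all---it is stated as well-known (with an immediate \qed), so there is nothing to compare against; your verification, showing that each $[f]$ with countable support $A$ is a $G_\delta$-set of $2^\kappa$, and that any point of a $G_\delta$-set $G=\bigcap_{n<\omega}U_n$ lies in some such $[f]\subseteq G$ obtained by amalgamating the countably many finite supports $F_n$ (enlarging the union to an infinite set when necessary, so that $A\in[\kappa]^\omega$), is exactly the standard argument the authors leave to the reader.
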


Recall that the \textit{Baire number} of a crowded topological space $X$ is the smallest cardinal $\kappa$ such that $X$ cannot be covered by a family of cardinality less than  $\kappa$ and consisting of nowhere dense subsets.
The Baire number of $(2^{\kappa})_\delta$ is always at least $\omega_2$.

\begin{pro}\label{pro:9a}
If $\kappa$ is an uncountable cardinal, then any union of at most $\omega_1$  nowhere dense subsets of $(2^{\kappa})_\delta$ is a boundary set.
\end{pro}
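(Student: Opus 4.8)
The plan is to prove directly that the interior of the union is empty, which is precisely the assertion that the union is a boundary set. Write $N=\bigcup_{\xi<\omega_1}N_\xi$, where $\{N_\xi:\xi<\omega_1\}$ enumerates the given nowhere dense sets (padding with empty sets if there are fewer than $\omega_1$ of them). By Lemma \ref{lem:9} the sets $[f]$ with $f\in 2^{A}$ and $A\in[\kappa]^{\omega}$ form a base of $(2^{\kappa})_\delta$, so it suffices to show that no nonempty basic set $[f]$ is contained in $N$; equivalently, every such $[f]$ contains a point avoiding all the $N_\xi$.

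First I would fix a basic set $[f]$ with $f\in 2^{A_0}$, $A_0\in[\kappa]^{\omega}$, and construct by transfinite recursion on $\xi<\omega_1$ an increasing chain of partial functions $f_\xi\in 2^{A_\xi}$ with countable domain $A_\xi\subseteq\kappa$, starting from $f_0=f$, so that the basic sets $[f_\xi]$ are decreasing and $[f_{\xi+1}]\cap N_\xi=\varnothing$. For the successor step, since $N_\xi$ is nowhere dense we have $[f_\xi]\not\subseteq\cl N_\xi$, so $[f_\xi]\setminus\cl N_\xi$ is a nonempty open set and contains some basic $[g]$. Because $[g]\subseteq[f_\xi]$ forces $f_\xi\subseteq g$ (any two total extensions of $g$ agreeing with $g$ would otherwise disagree on $\dom f_\xi$), I may take $f_{\xi+1}=g$, and then $[f_{\xi+1}]$ misses $N_\xi$.

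At a limit stage $\lambda<\omega_1$ I would set $f_\lambda=\bigcup_{\xi<\lambda}f_\xi$. This is exactly where the hypotheses on cardinalities are used: since $\lambda$ is a countable ordinal, $A_\lambda=\bigcup_{\xi<\lambda}A_\xi$ is a countable union of countable subsets of $\kappa$, hence countable, so $f_\lambda$ is again a legitimate partial function of countable support and $[f_\lambda]=\bigcap_{\xi<\lambda}[f_\xi]$ is a nonempty basic open set. Thus the recursion never leaves the base, and this uniform control of the support at every proper initial segment of $\omega_1$ is the main point of the argument; it is guaranteed by the regularity of $\omega_1$ together with the fact that the $G_\delta$-base elements have countable support.

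Finally, let $f^{*}=\bigcup_{\xi<\omega_1}f_\xi$, a partial function on $A^{*}=\bigcup_{\xi<\omega_1}A_\xi$ with $|A^{*}|\le\omega_1\le\kappa$, and let $g\in 2^{\kappa}$ be any total extension of $f^{*}$. Then $g\in[f_{\xi+1}]$ for every $\xi<\omega_1$, hence $g\notin N_\xi$ for every $\xi$, while $g\in[f_0]=[f]$. Therefore $[f]\not\subseteq N$, and since $[f]$ was an arbitrary nonempty basic open set, $\int N=\varnothing$, so $N$ is a boundary set.
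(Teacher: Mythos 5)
Your proposal is correct and follows essentially the same argument as the paper: a transfinite recursion of length $\omega_1$ building a decreasing chain of basic sets $[f_\xi]$ with countable supports, where each successor step avoids one nowhere dense set, limits are handled by taking unions (which stay countable by regularity of $\omega_1$), and a total extension of $\bigcup_\xi f_\xi$ witnesses that the given basic set is not covered. The paper merely compresses the successor and limit cases into one step and phrases the conclusion as density of the complement rather than emptiness of the interior, which is the same thing.
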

\begin{proof}
  Let $\mathcal{F}=\{F_\beta\colon \beta<\omega_1\}$ be a family of nowhere dense subsets of $(2^{\kappa})_\delta$.
  Fix a non-empty open set $U\subseteq (2^\kappa)_\delta$ and then choose a function $f_0\colon A_0\to\{0,1\}$ such that $[f_0]\subseteq U\setminus F_0$ and $A_0\in [\kappa]^{\omega}$.
Suppose that for $\beta<\alpha$ there are defined functions $f_\beta\colon A_\beta\to\{0,1\}$   such that  $[f_\gamma]\supseteq [f_\beta]$ and $A_\beta\in[\kappa]^{\omega}$ whenever $\gamma<\beta<\alpha$.
Choose $A_\alpha\in[\kappa]^{\omega}$ and  $f_\alpha\colon A_\alpha\to\{0,1\}$ such that
$$[f_\alpha]\cap F_\alpha=\emptyset\mbox{ and }\bigcup\{f_\beta\colon \beta<\alpha\}\subseteq f_\alpha.$$
  If
  $\bigcup\{f_\beta\colon \beta<\omega_1\}\subseteq f\in 2^\kappa$, then  $f\in U\setminus\bigcup\mathcal{F}$, i.e. the complement of $\bigcup\mathcal{F}$ is a  dense subset of $(2^\kappa)_\delta$.
\end{proof}

\section{The $G_\delta$-modification of  $2^{\omega_1}$ }\label{sec:4}
For each $\alpha<\omega_1$, assume that the set $2^{\alpha}$ is equipped with the discrete topology  and let $\pi^\alpha_\beta(f)=f|_\beta$, whenever $\beta<\alpha$ and $f\in 2^\alpha$.
We have 
$$\prod_{\alpha<\omega_1}2^\alpha\supseteq\varprojlim\{2^\alpha, \pi^\alpha_\beta, \omega_1\}\stackrel{\phi}{\to} (2^{\omega_1})_\delta,$$
where $\phi((p_\alpha))=\bigcup\{p_\alpha\colon\alpha<\omega_1\}$ for  each $(p_\alpha)\in \varprojlim\{2^\alpha,\pi^\alpha_\beta,\omega_1\}$.
Since $\phi(\pi^{-1}_\alpha(p_\alpha))=[p_\alpha]$, then the bijection $\phi$ is a homeomorphism. 

If  $\alpha<\omega_1$ and $f\in 2^\alpha$, then  put
$$f^*(\gamma)=
\begin{cases}
f(\gamma),&\mbox{when }\gamma<\alpha;\\
0,&\mbox{when }\alpha \leq \gamma< \omega_1.
\end{cases}$$
Let $\mathbf{\Sigma}=\{f^*\colon f\in 2^\alpha\mbox{ and }\alpha<\omega_1\}$.
Thus, if any $X_\alpha=2^\alpha$ is endowed with the discrete topology and $\iota^\alpha_\beta(f)=f^*|_\alpha$, then $\mathbf{\Sigma}=\mathbf{\Sigma}_{\mathbb{P}}$, where  $\mathbb{P}=\{2^\alpha,\pi^\alpha_\beta,\iota^\alpha_\beta,\omega_1\}$.
Clearly, the mapping $[f]\mapsto f^*$ is a labeling.

\begin{pro}\label{pro:9b}
The Baire number of the subspace $\mathbf{\Sigma}\subseteq(2^{\omega_1})_\delta$ is $\omega_1$.
\end{pro}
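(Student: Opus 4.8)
The plan is to pin down the Baire number by two matching bounds: I would show that $\mathbf{\Sigma}$ can be written as a union of $\omega_1$ nowhere dense sets (so its Baire number is at most $\omega_1$), and that it cannot be covered by countably many nowhere dense sets, i.e.\ that $\mathbf{\Sigma}$ is a Baire space (so its Baire number is at least $\omega_1$). Throughout I would use the initial-segment base $\{[g]\colon g\in 2^\alpha,\ \alpha<\omega_1\}$ recorded in Section \ref{sec:2} (a base by Lemma \ref{lem:9}), intersected with $\mathbf{\Sigma}$. I would first note that $\mathbf{\Sigma}$ is crowded and dense in $(2^{\omega_1})_\delta$, since any nonempty $[g]\cap\mathbf{\Sigma}$ already contains two distinct points of $\mathbf{\Sigma}$ (extend $g$ by zeros, or by a single $1$ on a coordinate outside $\dom g$); this makes the Baire number well defined.

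For the upper bound, for each $\alpha<\omega_1$ I set $S_\alpha=\{x\in\mathbf{\Sigma}\colon x(\gamma)=0\text{ for all }\gamma\geq\alpha\}$, the functions whose support is contained in $\alpha$, so that $\mathbf{\Sigma}=\bigcup_{\alpha<\omega_1}S_\alpha$. Each $S_\alpha$ is the trace on $\mathbf{\Sigma}$ of a product-closed condition, hence closed, and it has empty interior: given a nonempty basic $[g]\cap\mathbf{\Sigma}$ with $g\in 2^\beta$, I pick a coordinate $\gamma\geq\max(\alpha,\beta)$, which is unconstrained by $g$, and exhibit a point of $[g]\cap\mathbf{\Sigma}$ taking value $1$ at $\gamma$; this point misses $S_\alpha$. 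Thus every $S_\alpha$ is nowhere dense, and $\mathbf{\Sigma}$ is a union of $\omega_1$ nowhere dense sets.

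For the lower bound I would prove that $\mathbf{\Sigma}$ is a Baire space by a fusion argument. Let $\{N_n\colon n<\omega\}$ be nowhere dense (replacing each by its closure) and let $V\neq\emptyset$ be open; I seek a point of $V$ outside every $N_n$. Starting from an initial-segment condition $g_0\in 2^{\alpha_0}$ with $[g_0]\cap\mathbf{\Sigma}\subseteq V$, I recursively choose $g_{n+1}\in 2^{\alpha_{n+1}}$ extending $g_n$, with $\alpha_{n+1}\geq\alpha_n$, so that $[g_{n+1}]\cap\mathbf{\Sigma}\subseteq\mathbf{\Sigma}\setminus N_n$; this is possible because $\mathbf{\Sigma}\setminus N_n$ is dense open, hence meets the nonempty set $[g_n]\cap\mathbf{\Sigma}$ and contains a basic neighbourhood of any witnessing point. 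Put $\alpha_\omega=\sup_n\alpha_n$ and $g_\omega=\bigcup_n g_n\in 2^{\alpha_\omega}$. Since $g_n\subseteq g_\omega\subseteq g_\omega^*$ for all $n$, the thread $g_\omega^*$ lies in $[g_0]\cap\mathbf{\Sigma}\subseteq V$ and in each $[g_{n+1}]\cap\mathbf{\Sigma}$, so it avoids all $N_n$.

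The main obstacle, and the reason the value comes out exactly $\omega_1$, is the fusion step: the limit point must stay in $\mathbf{\Sigma}$, that is, have bounded support, and this holds precisely because a countable supremum of countable ordinals remains below $\omega_1$, so $\alpha_\omega<\omega_1$ and $g_\omega^*\in\mathbf{\Sigma}$. This is the same mechanism as in Proposition \ref{pro:9a}, but run only $\omega$ steps rather than $\omega_1$ steps; pushing the construction to length $\omega_1$ would leave $\mathbf{\Sigma}$, which is exactly why $\omega_1$ nowhere dense sets already suffice to cover it. Combining the two bounds, the Baire number of $\mathbf{\Sigma}$ equals $\omega_1$.
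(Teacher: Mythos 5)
Your proof is correct and takes essentially the same route as the paper's: your sets $S_\alpha$ are exactly the paper's sets $A_\alpha=\{f^*\colon f\in 2^\alpha\}$ (bounded-support points), and your $\omega$-step fusion for the lower bound is precisely the paper's inductive choice of $[f_n]\subseteq[f_{n-1}]\setminus F_n$ followed by taking $f=\bigcup_n f_n$ and noting $f^*\in\mathbf{\Sigma}$. The only cosmetic difference is that you verify closedness and empty interior of $S_\alpha$ directly, where the paper observes that each $A_\alpha$ is (closed) discrete in the crowded space $\mathbf{\Sigma}$.
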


\begin{proof}
Each set $A_\alpha=\{f^*\colon f\in 2^\alpha\}\subseteq\mathbf{\Sigma}$ is a discrete subset  and $\mathbf{\Sigma}=\bigcup\{A_\alpha\colon\alpha<\omega_1\}$, hence the Baire number of $\mathbf{\Sigma}$ is at most $\omega_1$.

If $\{F_n\colon n<\omega\}$ is an increasing sequence of nowhere dense subsets of $\mathbf{\Sigma}$, then inductively define a function $f_n\colon \alpha_n\to\{0,1\}$  such that $(\alpha_n)$ is an increasing sequence of countable ordinals such that $[f_{n}]\subseteq [f_{n-1}]\setminus F_n$.
We get $f^*\in \mathbf{\Sigma}\setminus\bigcup\{F_n\colon n<\omega\}$, whenever  $f=\bigcup\{f_n\colon n<\omega\}$.
\end{proof}

A small modification of the proof above gives that any first category subset of a dense subspace of $(2^{\omega_1})_\delta$ is nowhere dense.

\begin{cor}
 $(2^{\omega_1})_\delta$ is not homeomorphic to a subspace of $\mathbf{\Sigma}$.
 \end{cor}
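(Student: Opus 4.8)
The plan is to argue by contradiction using the Baire number, which is precisely the invariant that separates the two spaces in Propositions \ref{pro:9a} and \ref{pro:9b}. Suppose some subspace $Y\subseteq\mathbf{\Sigma}$ were homeomorphic to $(2^{\omega_1})_\delta$. Since $(2^{\omega_1})_\delta$ is crowded (every basic open set $[g]$ with $g$ of countable domain contains more than one point) and, as a subspace of a completely regular space, is $T_1$, the same holds for $Y$. I would then transport the Baire number through the homeomorphism: by Proposition \ref{pro:9a} the space $(2^{\omega_1})_\delta$ cannot be covered by $\omega_1$ nowhere dense sets, so neither can $Y$. The goal is to contradict this by exhibiting a cover of $Y$ by $\omega_1$ nowhere dense subsets.

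The cover comes from the decomposition used in Proposition \ref{pro:9b}. Recall $\mathbf{\Sigma}=\bigcup_{\alpha<\omega_1}A_\alpha$ with $A_\alpha=\{f^*\colon f\in 2^\alpha\}$ relatively discrete in $\mathbf{\Sigma}$. Intersecting with $Y$ gives $Y=\bigcup_{\alpha<\omega_1}(Y\cap A_\alpha)$, and each $Y\cap A_\alpha$ is relatively discrete in $Y$. It remains to see that a relatively discrete subset $D$ of a crowded $T_1$ space is nowhere dense. I would argue that if $\cl D$ had nonempty interior $U$, then $U$ is a crowded open set in which $D\cap U$ is both dense and relatively discrete; choosing $x\in D\cap U$ and an open $W\subseteq U$ with $W\cap D=\{x\}$, the set $W\setminus\{x\}$ is a nonempty ($Y$ is crowded) open subset of $U$ disjoint from $D$, contradicting the density of $D\cap U$ in $U$. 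Hence each $Y\cap A_\alpha$ is nowhere dense in $Y$.

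Combining the two halves, $Y$ is a union of $\omega_1$ nowhere dense sets, contradicting the lower bound $\omega_2$ on its Baire number inherited from $(2^{\omega_1})_\delta$ via Proposition \ref{pro:9a}. Therefore no subspace of $\mathbf{\Sigma}$ is homeomorphic to $(2^{\omega_1})_\delta$; that is, $(2^{\omega_1})_\delta\not\subset_h\mathbf{\Sigma}$.

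I expect the only delicate point to be the step that a relatively discrete set is nowhere dense, because nowhere density is \emph{not} inherited by subspaces, so this cannot simply be quoted from $\mathbf{\Sigma}$ and must be verified inside $Y$ itself. The crowdedness of $Y$, coming from the homeomorphism with $(2^{\omega_1})_\delta$, is exactly what makes this work; everything else is bookkeeping with the decomposition $\mathbf{\Sigma}=\bigcup_{\alpha<\omega_1}A_\alpha$ and the invariance of the Baire number under homeomorphism.
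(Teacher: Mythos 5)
Your proof is correct and follows essentially the same route as the paper: both decompose $\mathbf{\Sigma}$ into $\omega_1$ many discrete subspaces, observe that these become nowhere dense inside a crowded subspace homeomorphic to $(2^{\omega_1})_\delta$, and contradict the Baire-number lower bound of Proposition \ref{pro:9a}. Your write-up merely makes explicit the steps the paper leaves implicit (the $T_1$ plus crowdedness argument that relatively discrete sets are nowhere dense), which is a sound elaboration rather than a different method.
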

 \vspace{-.7cm}
\begin{proof}
Any crowded subspace of $\mathbf{\Sigma}$, being a union of at most $\omega_1$ many discrete subspaces, has the Baire number not greater than $\omega_1$.
But $(2^{\omega_1})_\delta$ has the Baire number at least $\omega_2$.
\end{proof}
Recall that the family $\frak{B}=\{[f]\colon f\in 2^\alpha\mbox{ and }\alpha<\omega_1\}$ is a base for $(2^{\omega_1})_\delta$.
Below, we present a modified proof from \cite{kul}, cf. \cite[3.1. Theorem]{bps}.

\begin{pro}
If $\omega_2\leq\frak{c}$, then the space $(2^{\omega_1})_\delta$ is the union of an increasing sequence $\{D_\alpha\colon \alpha<\omega_2\}$ of nowhere dense subsets. 
\end{pro}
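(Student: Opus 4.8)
The plan is to realise $(2^{\omega_1})_\delta$ as $(2^\omega)^{\omega_1}$ by splitting the index set into consecutive blocks. Partition $\omega_1$ into the blocks $I_\nu=[\omega\cdot\nu,\omega\cdot(\nu+1))$ for $\nu<\omega_1$, each of order type $\omega$, so that every $g\in 2^{\omega_1}$ is coded by the $\omega_1$-sequence of ``coordinate reals'' $g|_{I_\nu}\in 2^{I_\nu}\cong 2^\omega$. Since $\omega_2\leq\frak c$, I would fix an injective sequence $\setof{y_\alpha}{\alpha<\omega_2}\subseteq 2^\omega$ and put $R_\alpha=\setof{y_\beta}{\alpha\leq\beta<\omega_2}$ and $Z_\alpha=2^\omega\setminus R_\alpha$. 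The sets $R_\alpha$ decrease with $\alpha$, each is non-empty, and $\bigcap_{\alpha<\omega_2}R_\alpha=\emptyset$ by the injectivity of the sequence. The candidate for the filtration is then
$$D_\alpha=\setof{g\in 2^{\omega_1}}{g|_{I_\nu}\in Z_\alpha\text{ for every }\nu<\omega_1},$$
i.e.\ the set of points all of whose coordinate reals avoid the tail $R_\alpha$. Since $Z_\alpha$ grows with $\alpha$, the sequence $\{D_\alpha\colon\alpha<\omega_2\}$ is automatically increasing.

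Next I would check that each $D_\alpha$ is nowhere dense, using the base $\frak B=\setof{[f]}{f\in 2^\gamma\text{ and }\gamma<\omega_1}$ and the criterion that $D_\alpha$ is nowhere dense as soon as every basic set $[f]$ contains a basic subset disjoint from $D_\alpha$. Given $f\in 2^\gamma$, the blocks are cofinal in $\omega_1$, so I may choose $\nu$ with $\omega\cdot\nu\geq\gamma$, pick any real $r\in R_\alpha$ (here the non-emptiness of $R_\alpha$, equivalently the properness $Z_\alpha\subsetneq 2^\omega$, is exactly what is needed), and extend $f$ to $f'\in 2^{\omega\cdot(\nu+1)}$ with $f'|_{I_\nu}=r$. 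Then every $g\in[f']$ has $g|_{I_\nu}=r\notin Z_\alpha$, so $[f']\cap D_\alpha=\emptyset$ while $[f']\subseteq[f]$. This is the ``coordinate-killing'' step: nowhere density is forced by the fact that the defining condition on $g$ ranges over \emph{all} blocks and can therefore be refuted arbitrarily high up.

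The main obstacle is the covering $\bigcup_{\alpha<\omega_2}D_\alpha=(2^{\omega_1})_\delta$, because the coordinate reals of an arbitrary $g$ need not lie among the fixed reals $y_\alpha$, and nowhere density forbids me from relaxing ``for every $\nu$'' to ``eventually''. I would resolve this using the regularity of $\omega_2$ together with $\bigcap_\alpha R_\alpha=\emptyset$. Fix $g$ and let $S_g=\setof{g|_{I_\nu}}{\nu<\omega_1}$, a set of at most $\omega_1$ reals. For each $s\in S_g$ the empty intersection of the tails yields a least $\alpha(s)<\omega_2$ with $s\notin R_{\alpha(s)}$, and monotonicity of $R_\alpha$ keeps $s$ out of every larger tail. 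As $|S_g|\leq\omega_1<\omega_2=\operatorname{cf}(\omega_2)$, the supremum $\alpha^*=\sup\setof{\alpha(s)}{s\in S_g}$ is still below $\omega_2$, and then $S_g\cap R_{\alpha^*}=\emptyset$, i.e.\ $g|_{I_\nu}\in Z_{\alpha^*}$ for all $\nu$, so $g\in D_{\alpha^*}$. Thus the increasing sequence of nowhere dense sets $\{D_\alpha\colon\alpha<\omega_2\}$ exhausts $(2^{\omega_1})_\delta$, which is what is required; note how the hypothesis $\omega_2\leq\frak c$ enters only to supply the $\omega_2$ distinct reals, while the regularity of $\omega_2$ and the countable length of the blocks do the rest.
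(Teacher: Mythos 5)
Your argument is correct: the blocks $I_\nu$ are cofinal in $\omega_1$, so planting a real $r\in R_\alpha$ on a block above $\dom(f)$ yields a basic subset of $[f]$ disjoint from $D_\alpha$; the sets $D_\alpha$ increase because the tails $R_\alpha$ shrink; and the exhaustion step is exactly the regularity argument (each coordinate real of $g$ leaves the tails at some stage since $\bigcap_\alpha R_\alpha=\emptyset$, and $|S_g|\le\omega_1<\operatorname{cf}(\omega_2)$ bounds those stages below $\omega_2$). The paper reaches the same kind of filtration by a different device: for every basic set $V=[f]\in\frak{B}$ it chooses a family $\{V_\nu\subseteq V\colon \nu<\omega_2\}$ of pairwise disjoint members of $\frak{B}$ (this is where $\omega_2\le\frak{c}$ enters there), and puts $D_\alpha=(2^{\omega_1})_\delta\setminus\bigcup\{V_\nu\colon \alpha<\nu<\omega_2\mbox{ and }V\in\frak{B}\}$; nowhere density is then immediate by construction, and the covering uses the observation that a point $x$ lies only in the $\omega_1$ many basic sets $[x|_\gamma]$, hence meets at most $\omega_1$ many of the sets $V_\nu$, so by regularity of $\omega_2$ the relevant indices are bounded by some $\alpha_x<\omega_2$. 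Thus the skeleton is the same in both proofs --- a tail-complement filtration, disjoint witnesses inside every basic set, and regularity of $\omega_2$ for exhaustion --- but your witnesses are produced uniformly, from a single injective $\omega_2$-sequence of reals together with the block decomposition of the coordinates, whereas the paper makes a separate choice of an $\omega_2$-sized disjoint refinement inside each basic set, indexed over the whole base $\frak{B}$. Your version buys explicitness: the role of $\omega_2\le\frak{c}$ is isolated in fixing the reals $y_\alpha$, no choice over $\frak{B}$ is needed, and the per-point counting becomes the transparent statement that $g$ has only $\omega_1$ coordinate reals. The paper's version buys brevity, since nowhere density holds by fiat once the disjoint families are chosen.
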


\begin{proof}
If $V=[f]$ and $f\in 2^\alpha$, then let $\{V_\nu\subseteq V\colon \nu<\omega_2\}$ be a family consisting of pairwise disjoint  elements of $\frak{B}$.
Put
$$D_\alpha=(2^{\omega_1})_\delta\setminus\bigcup\{V_\nu\colon \alpha<\nu<\omega_2\mbox{ and }V\in\frak{B}\}.$$
The sequence $\{D_\alpha\colon\alpha<\omega_2\}$ is increasing and its elements are nowhere dense sets.
If $x\in (2^{\omega_1})_\delta$, then $x$ belongs   to elements of the form $[x|_\alpha]\in\frak{B}$ only.
In other words, $x$ belongs to $\omega_1$ many elements  $V\in\frak{B}$.
Thus, there exists $\alpha_x<\omega_2$ such that $x\in D_{\alpha_x}$, which implies $\bigcup\{D_\alpha\colon\alpha<\omega_2\}=(2^{\omega_1})_\delta$.
\end{proof}

In fact, we have shown that the Baire number of $(2^{\omega_1})_\delta$ equals $\omega_2$ is  consistent with ZFC, for example, when  $|2^{\omega_1}|=\omega_2$.

The space $\mathbf{\Sigma}$ is a counterpart of a $P$-space  of cardinality and weight $\omega_1$, which appears in Lemma 2.2 and Corollary 2.3 in \cite{dow}.
If the Continuum Hypothesis fails, the space $\mathbf{\Sigma}$ being of cardinality $\frak{c}$, is not homeomorphic to a $P$-space  of cardinality  $\omega_1$.

\begin{thm}\label{thm:4}
Any dense subset of $(2^{\omega_1})_\delta$   contains a homeomorphic copy of  the space $\mathbf{\Sigma}$.
\end{thm}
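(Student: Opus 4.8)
The plan is to realise a homeomorphic copy of $\mathbf{\Sigma}$ inside the prescribed dense set $D$ by constructing, node by node, an order preserving copy of the binary tree $T=\bigcup_{\alpha<\omega_1}2^\alpha$ inside the clopen base $\frak{B}=\{[f]\colon f\in 2^\alpha,\ \alpha<\omega_1\}$ of $(2^{\omega_1})_\delta$. Recall that $\mathbf{\Sigma}$ is exactly the set of eventually zero functions $\omega_1\to 2$, that is, the branches of $T$ which are constantly $0$ from some countable level on; and that by the tree topology description of Section~\ref{sec:2} its topology is generated by the clopen sets $[w]\cap\mathbf{\Sigma}$, where $w\in T$ and $[w]=\{x\colon x|_{|w|}=w\}$. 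I shall attach to each node $w\in 2^\alpha$ a function $c_w\in 2^{\eta_w}$ with $\eta_w<\omega_1$ (so $[c_w]\in\frak{B}$) together with a target point $t_w\in D\cap[c_w]$, keeping the assignment $w\mapsto[c_w]$ an order embedding into $(\frak{B},\supseteq)$. Then $Z:=\{\bigcup_\gamma c_{u|_\gamma}\colon u\in\mathbf{\Sigma}\}$ will be the desired copy, and the targets are the device that forces $Z\subseteq D$.

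The recursion is carried out \emph{one node at a time}, never level by level. At a successor node-level, given $c_w$ and $t_w$, I put $t_{w^\frown 0}=t_w$ and let the left child commit more of the same target, $c_{w^\frown 0}=t_w|_{\eta_{w^\frown 0}}$ for a suitably large countable $\eta_{w^\frown 0}$; for the right child I use that $[c_w]$ is a nonempty crowded clopen set, so that $D\cap[c_w]$ is infinite, to choose $t'\in D\cap[c_w]$ with $t'\neq t_w$, I let $\zeta\ge\eta_w$ be the first coordinate where $t'$ and $t_w$ disagree, and I set $c_{w^\frown 1}=t'|_{\eta_{w^\frown 1}}$, $t_{w^\frown 1}=t'$, with $\eta_{w^\frown 0},\eta_{w^\frown 1}>\zeta$. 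This makes $[c_{w^\frown 0}]$ and $[c_{w^\frown 1}]$ disjoint clopen subsets of $[c_w]$. At a limit node-level $\lambda$ I put $c_w=\bigcup_{\gamma<\lambda}c_{w|_\gamma}$ and $\eta_w=\sup_{\gamma<\lambda}\eta_{w|_\gamma}<\omega_1$; for the target I distinguish two cases: if $w$ is constantly $0$ on some final segment of $\lambda$, so that the targets $t_{w|_\gamma}$ have stabilised at a common value $t$, I set $t_w=t$ (one checks $t\in D\cap[c_w]$), and otherwise I simply pick a fresh $t_w\in D\cap[c_w]$. Throughout I also demand $\eta_{w^\frown i}\ge|w|+1$, so that along every branch the commitment lengths are cofinal in $\omega_1$.

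With this bookkeeping the verification is routine. Because targets are inherited along left spines and are chosen from $D$ at every fresh node, the left spine issuing from any node $w$ converges to $t_w\in D$; hence for every eventually zero branch $u$, writing $\gamma_0$ for the least ordinal beyond which $u$ vanishes, the point $p_u:=\bigcup_\gamma c_{u|_\gamma}$ equals the target $t_{u|_{\gamma_0}}$, which was chosen in $D$, so $Z\subseteq D$. The assignment $\Psi\colon\mathbf{\Sigma}\to Z$, $u\mapsto p_u$, is injective and satisfies $\Psi([w]\cap\mathbf{\Sigma})=[c_w]\cap Z$; since the two children of each node carve the parent clopen set into two disjoint pieces that cover it within $Z$, and since the lengths $\eta_{u|_\gamma}$ are cofinal in $\omega_1$, the families $\{[w]\cap\mathbf{\Sigma}\colon w\in T\}$ and $\{[c_w]\cap Z\colon w\in T\}$ are matching bases. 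Therefore $\Psi$ is a homeomorphism of $\mathbf{\Sigma}$ onto $Z\subseteq D$.

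The step that needs care — and where a naive construction breaks down — is keeping the commitment lengths countable. If one insisted on a single length $\eta_\alpha$ for all nodes of level $\alpha$, then at a successor step one would have to dominate the disagreement ordinals $\zeta_w$ of \emph{all} $2^{|\alpha|}$ siblings simultaneously, and a supremum of uncountably many countable ordinals can equal $\omega_1$, which would be fatal. Treating each node with its own length removes this difficulty entirely, since a single successor step involves only the two children of one node. The only other delicate point is the limit node-levels, where one must propagate the target along left spines (to guarantee that eventually zero branches land on the already chosen points of $D$) while still being free to pick new points of $D$ at those limit nodes whose support is cofinal below $\lambda$; the case distinction above is designed precisely to reconcile these two demands.
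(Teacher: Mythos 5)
Your proof is correct, and it shares the paper's central idea: whenever a node starts a new ``left spine'' (all-zero continuation), pick a point of the dense set inside the corresponding clopen set and commit all later values along that spine to follow it, so that every eventually-zero branch converges to a chosen point of $D$. The mechanics, however, are genuinely different. The paper builds level-preserving bijections $S_\alpha\colon 2^\alpha\to 2^\alpha$ (conditions (A)--(C)), which glue to an autohomeomorphism $S$ of the whole space $(2^{\omega_1})_\delta$ with $S[\mathbf{\Sigma}]\subseteq Y$: there the image of a node is chosen \emph{first}, inside the same level, and the target point of $Y$ is chosen \emph{afterwards} inside that image, so disjointness of images of incomparable nodes is automatic and no commitment lengths ever appear. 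You invert this order: the target is chosen first, and the node's clopen set is a sufficiently long initial segment of that target, which is precisely why you need node-dependent lengths $\eta_w$. The supremum obstruction you describe in your last paragraph is real, but note that it is an artifact of this target-first scheme; the paper's level-preserving scheme never meets it, since $S_\alpha$ never pushes a node to a longer domain. What your Cantor-scheme route buys is locality: each successor step touches only the two children of one node, separation is arranged by hand at a single coordinate, and you never need a bijection of a whole level $2^\alpha$ together with its coherence conditions. What it gives up is the stronger output: the paper obtains an ambient automorphism of $(2^{\omega_1})_\delta$ carrying $\mathbf{\Sigma}$ into $Y$, and exactly this extra strength is recycled, together with Lemma \ref{lem:I}, in the proof of Theorem \ref{thm:14}, whereas your argument yields the embedding asserted in Theorem \ref{thm:4} and nothing more. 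Two routine points you should still write down: the anchoring of the recursion at the root ($c_\emptyset=\emptyset$ and an arbitrary $t_\emptyset\in D$), and the observation that along a left spine each $c_{w^\frown 0}$ is by construction an initial segment of the common target, which is what makes your limit-case verification $t\in D\cap[c_w]$ go through.
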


\begin{proof}
Let $Y\subseteq(2^{\omega_1})_\delta$ be a dense subset.
Inductively, define a sequence of functions  $S_\alpha\colon 2^\alpha\to 2^\alpha$, for $0<\alpha<\omega_1$, such that the following conditions are fulfilled.
\begin{itemize}
\item[(A).] If $f\in 2^\alpha$, then $S_{\alpha+1}$ restricted to the set $\{f^\frown 0,f^\frown1\}$ is a bijection onto the set $\{S_\alpha(f)^\frown 0,S_\alpha(f)^\frown1\}$, where $f^\frown i=f\cup\{(\alpha,i)\}$.
\item[(B).] If $\alpha$ is a limit ordinal and $f\in 2^\alpha$, then
$$S_\alpha(f)=\bigcup\{S_\beta(f|_\beta)\colon \beta<\alpha\},$$
in particular $S_\alpha(f)\in 2^\alpha$.
\item[(C).] If $g\in\mathbf{\Sigma}$, then
$$\bigcup\{S_\alpha(g|_\alpha)\colon \alpha<\omega_1\}\in Y.$$
\end{itemize}

If $f\in 2^1$, then choose $y(f)\in Y\cap [f]$, and put $S_\alpha(f^*|_\alpha)=y(f)|_\alpha$ for each $\alpha<\omega_1$.
Thus,  $y(f)=\bigcup\{S_\alpha(f^*|_\alpha)\colon \alpha<\omega_1\}$ and $S_1(f)=f$.

Fix $\alpha<\omega_1$ and assume that bijections $S_\beta\colon 2^\beta\to 2^\beta$ are defined,  whenever $\beta<\alpha$, such that conditions (A)--(C) are fulfilled, in particular, for $g\in 2^\beta$ and $\beta<\alpha$, the values $S_\gamma(g^*|_\gamma)$  are defined such that
$$\bigcup\{S_\gamma(g^*|_\gamma)\colon \gamma<\omega_1\}\in Y.$$

If $\alpha$ is a limit ordinal and $g\in 2^\alpha$, and $S_\alpha(g)$ has not  been defined, i.e. $\beta<\alpha$ implies  $g^*\neq (g|_\beta)^*$, then choose
$$y(g)\in Y\cap [\bigcup\{S_\beta(g|_\beta)\colon\beta<\alpha\}]$$
and put $S_\gamma(g^*|_\gamma)=y(g)|_\gamma$ for $\alpha\leq\gamma<\omega_1$.
We get 
$$S_\alpha(g^*|_\alpha)=S_\alpha(g)=\bigcup\{S_\beta(g|_\beta)\colon \beta<\alpha\}\in 2^\alpha$$
and $\bigcup\{S_\gamma(g^*|_\gamma)\colon\gamma<\omega_1\}=y(g)\in Y.$

If $\beta<\alpha$ and $g\in 2^\beta$, then all the values $S_\gamma(g^*|_\gamma)$ are defined by induction assumption.
Since $g^*=(g^\frown0)^*$, it remains to define  $S_\alpha(g^\frown 1)$, where $\alpha=\beta+1$, and $S_\gamma((g^\frown 1)^*|_\gamma)$ for $\alpha<\gamma<\omega_1$.
Namely, let $i\in\{0,1\}$ be such that $S_\beta(g)^\frown i\neq S_{\beta+1}(g^\frown 0)\in 2^{\beta+1}$.
Then  put $$S_{\beta+1}(g^\frown 1)=S_\beta(g)^\frown i,$$
and  choose $y(g)\in Y\cap [S_{\beta+1}(g^\frown 1)]$, and put
$$S_\gamma((g^\frown 1)^*|_\gamma)=y(g)|_\gamma,$$
whenever $\beta+1<\gamma<\omega_1$.
\begin{figure}[h]
$$\xymatrix{
	2^\beta \ar@{->}@{-->}[d]_{S_\beta} && 2^\alpha\ar[ll]_{\pi^\alpha_\beta} \ar@{-->}[d]^{S_\alpha} \\
	2^\beta       && 2^\alpha\ar[ll]_{\pi^\alpha_\beta}
      }$$\caption{}\label{fig:3a}
      \end{figure}

By the definition, Diagram \ref{fig:3a},
where $\pi^\alpha_\beta(f)=f|_\beta$, is commutative.

Equipping each $2^\alpha$ with the discrete topology, we obtain that $(2^{\omega_1})_\delta=\varprojlim\{2^\alpha,\pi^\alpha_\beta,\omega_1\}$ and we get an automorphism
$$S\colon (2^{\omega_1})_\delta\to (2^{\omega_1})_\delta,$$ where $S(f)=\bigcup\{S_\alpha(f|_\alpha)\colon \alpha<\omega_1\}$. 
By condition (C), the image  $S[\mathbf{\Sigma}]\subseteq Y$ is a homeomorphic copy of $\mathbf{\Sigma}$.
\end{proof}

For any partition $(2^{\omega_1})_\delta=A\cup B$, we have $\mathbf{\Sigma} \subset_h A$ or $\mathbf{\Sigma}\subset_h B$, in other words, the topological arrow relation $(2^{\omega_1})\to (\mathbf{\Sigma})^1_2$ is fulfilled.
Namely, there exists $f\in 2^\alpha$,  where $\alpha<\omega_1$,  such that $A\cap [f]$ or $B\cap [f]$ is dense in $[f]$, since both sets $A$ and $B$ cannot be nowhere dense.
But the subspace $[f]\subseteq (2^{\omega_1})_\delta$ is homeomorphic to $(2^{\omega_1})_\delta$.

\begin{cor}\label{cor:7}
The family
$$\{Y\subseteq (2^{\omega_1})_\delta \colon Y\mbox{ is a dense subset}\}$$
 contains the least element with respect to the relation $\subset_h$.\qed
 \end{cor}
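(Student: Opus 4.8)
The plan is to read off the statement as a direct consequence of Theorem~\ref{thm:4}. That theorem asserts that every dense subset $Y\subseteq(2^{\omega_1})_\delta$ contains a homeomorphic copy of $\mathbf{\Sigma}$, which is exactly the assertion that $\mathbf{\Sigma}\subset_h Y$ for every dense $Y$. So the candidate for the least element in the family of dense subsets, ordered by $\subset_h$, is the space $\mathbf{\Sigma}$ itself. First I would observe that $\mathbf{\Sigma}$ is indeed dense in $(2^{\omega_1})_\delta$: given any basic open set $[f]$ with $f\in 2^\alpha$ and $\alpha<\omega_1$, the point $f^*\in\mathbf{\Sigma}$ lies in $[f]$, so $\mathbf{\Sigma}$ meets every nonempty basic open set. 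Hence $\mathbf{\Sigma}$ is a member of the family in question, not merely a lower bound living outside it.

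With membership established, the lower-bound property is precisely Theorem~\ref{thm:4}: for an arbitrary dense $Y$, the theorem produces an embedding of $\mathbf{\Sigma}$ into $Y$, i.e.\ $\mathbf{\Sigma}\subset_h Y$. Since this holds uniformly for all dense $Y$, including $Y=\mathbf{\Sigma}$ (where the identity witnesses $\mathbf{\Sigma}\subset_h\mathbf{\Sigma}$ trivially), $\mathbf{\Sigma}$ is a $\subset_h$-minimum of the family. I would phrase the proof in two short sentences: one invoking density of $\mathbf{\Sigma}$ to place it in the family, and one invoking Theorem~\ref{thm:4} to show it embeds into every member.

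The only subtlety worth a moment's care is the distinction between ``least element'' and ``minimal element'' under the preorder $\subset_h$. Because $\subset_h$ is a preorder rather than a partial order (spaces can be $=_h$-equivalent without being equal), ``least'' should be understood up to the equivalence $=_h$, and the corollary is really asserting the existence of a $=_h$-class that embeds into every dense subspace. This is unproblematic: Theorem~\ref{thm:4} gives $\mathbf{\Sigma}\subset_h Y$ for all dense $Y$, which is the substantive content, and there is no competing obstacle here. I do not anticipate a hard step, since all the real work was done in Theorem~\ref{thm:4}; the corollary is a packaging statement, and the short \qed-style proof already indicated in the excerpt (the corollary is stated with a trailing \qed) reflects exactly this.
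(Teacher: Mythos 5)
Your proposal is correct and matches the paper's intent exactly: the corollary is stated with a \qed because it is an immediate consequence of Theorem~\ref{thm:4}, with $\mathbf{\Sigma}$ (dense since $f^*\in[f]\cap\mathbf{\Sigma}$ for every basic $[f]$) serving as the least element. Your added remark about ``least'' being understood up to $=_h$-equivalence in the preorder $\subset_h$ is a reasonable clarification but does not change the substance.
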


But under the Continuum Hypothesis, we conclude the following.

 \begin{thm}\label{thm:14}
If the Continuum Hypothesis is assumed, then any two dense subsets of $(2^{\frak{c}})_\delta$ of cardinality $\frak{c}$ are homeomorphic.
 \end{thm}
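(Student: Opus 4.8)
The plan is to use the Continuum Hypothesis only to fix $\mathfrak{c}=\omega_1$, so that both dense sets have cardinality $\omega_1$, and then to build the homeomorphism by a transfinite back-and-forth of length $\omega_1$. Write the space as $(2^{\omega_1})_\delta$ with its clopen base $\mathfrak{B}=\{[f]\colon f\in 2^\alpha,\ \alpha<\omega_1\}$, and let $Y,Z\subseteq(2^{\omega_1})_\delta$ be dense of cardinality $\omega_1$. For $x\neq x'$ put $\Delta(x,x')=\min\{\gamma\colon x(\gamma)\neq x'(\gamma)\}$, the level at which the two threads split. Two observations organise the argument. First, the trace topology on $Y$ has as a base the ``balls'' $[y|_\alpha]\cap Y=\{y'\in Y\colon\Delta(y,y')\geq\alpha\}$, so a bijection $h\colon Y\to Z$ with $\Delta(h(x),h(x'))=\Delta(x,x')$ for all $x,x'$ is automatically a homeomorphism: it carries each basic ball onto the basic ball $[h(y)|_\alpha]\cap Z$, and likewise for $h^{-1}$. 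Thus it suffices to construct such a $\Delta$-preserving bijection, and this reduction is valid in ZFC. Second, density makes the combinatorics uniform: every $[f]$ with $f\in 2^\alpha$ contains the $\mathfrak{c}$ pairwise disjoint basic sets $[g]$ with $g\in 2^{\alpha+\omega}$ extending $f$, each meeting $Y$ and $Z$, so under the Continuum Hypothesis $|[f]\cap Y|=|[f]\cap Z|=\mathfrak{c}=\omega_1$.

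Enumerate $Y=\{y_\xi\colon\xi<\omega_1\}$ and $Z=\{z_\xi\colon\xi<\omega_1\}$. I would construct an increasing, continuous chain of partial injections $h_\xi$ with countable domain $A_\xi\subseteq Y$ and range $B_\xi\subseteq Z$, each preserving $\Delta$, arranging at step $\xi$ that $y_\xi\in\dom h_{\xi+1}$ (forth) and $z_\xi\in B_{\xi+1}$ (back). For the forth step, given a new $y=y_\xi$ and the matched pairs $(a,h_\xi(a))$, the image $z$ must satisfy $\Delta(z,h_\xi(a))=\Delta(y,a)$ for every $a\in A_\xi$. Let $\sigma=\sup_{a\in A_\xi}\Delta(y,a)<\omega_1$. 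If the supremum is attained, say by $a_0$, then the single constraint $\Delta(z,h_\xi(a_0))=\sigma$ already forces $z$ into one basic clopen set, namely $[\,u^\frown(1-h_\xi(a_0)(\sigma))\,]$ with $u=h_\xi(a_0)|_\sigma$; if it is not attained, a cofinal sequence $a_n$ with $\Delta(y,a_n)\nearrow\sigma$ forces $z|_\sigma$ to equal the thread $w\in 2^\sigma$ obtained by gluing the $h_\xi(a_n)$ along their agreements. In either case the target is a single basic clopen set, which meets $Z$ in a set of size $\omega_1$ by the second observation; as $B_\xi$ is countable I may pick a fresh $z$ there and set $h_{\xi+1}(y)=z$. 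The back step is symmetric, using $|[f]\cap Y|=\omega_1$.

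The heart of the verification, and the step I expect to be the main obstacle, is checking that a $z$ chosen in this one target set really realises all the required splits $\Delta(z,h_\xi(a))=\Delta(y,a)$ simultaneously, and in particular that the non-attained (limit) case is consistent. This is exactly where the ultrametric inequality $\Delta(x,x'')\geq\min\{\Delta(x,x'),\Delta(x',x'')\}$, with equality when the two inner values differ, is used: it shows that once $\Delta(z,h_\xi(a_0))=\sigma$ (respectively $z|_\sigma=w$) is fixed, every other split $\Delta(z,h_\xi(a))$ is determined and equals $\min(\sigma,\Delta(y,a))=\Delta(y,a)$, and that the images $h_\xi(a_n)$ cohere into a genuine node $w\in 2^\sigma$. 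A small point to dispatch is that two matched points attaining the maximal split $\sigma$ with $y$ must agree at level $\sigma$ (each differing there from $y$, with only two values available), so the binary branching never obstructs the choice of $z(\sigma)$.

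Taking $h=\bigcup_{\xi<\omega_1}h_\xi$ then yields a $\Delta$-preserving bijection $Y\to Z$, which by the first observation is the desired homeomorphism. I would remark, finally, that since $Y$ is dense every node of $2^{<\omega_1}$ is realised, so the rule $y|_\alpha\mapsto h(y)|_\alpha$ is well defined and extends $h$ to a level-preserving automorphism of the tree $2^{<\omega_1}$; through the bijective case of Lemma \ref{lem:I} this furnishes an autohomeomorphism of $(2^{\omega_1})_\delta$ carrying $Y$ onto $Z$, placing the result squarely in the framework of Section \ref{sec:2}.
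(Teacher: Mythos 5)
Your proof is correct, and it takes a genuinely different route from the paper's. The paper never compares the two dense sets directly: it fixes a canonical representative, the space $\mathbf{\Sigma}$, endows an arbitrary dense set $X=\{x_\alpha\colon\alpha<\mathfrak{c}\}$ with the least-index labeling $E$ of its canonical $P$-matrix $\{\{[f]\cap X\colon f\in 2^\alpha\}\colon\alpha<\mathfrak{c}\}$, and then reruns the level-by-level construction of the bijections $S_\alpha\colon 2^\alpha\to 2^\alpha$ from Theorem \ref{thm:4}, so that Lemma \ref{lem:I} yields a homeomorphism of $\mathbf{\Sigma}$ onto $X$; two dense sets are then homeomorphic by transitivity, and surjectivity onto $X$ is achieved by the labeling (every $x_\alpha$ is the least-indexed point of some basic set missing the closed countable set $\{x_\gamma\colon\gamma<\alpha\}$). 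You instead run a Cantor-style back-and-forth directly between $Y$ and $Z$, point by point rather than level by level, with the invariant being preservation of the splitting ordinal $\Delta$; surjectivity comes from the ``back'' steps rather than from a labeling. Your key verifications are sound: the ultrametric equality does propagate the single constraint (attained case) or the glued node $w\in 2^\sigma$ (non-attained case) to all required splits, the binary-agreement observation disposes of conflicting constraints at the top level $\sigma$, and under CH each basic set meets each dense set in $\omega_1$ points while the stages stay countable, so fresh images always exist. What each approach buys: yours is elementary and self-contained --- it needs no $P$-matrices, labelings, enriched inverse systems, or Theorem \ref{thm:4} --- and it makes visible, via your closing remark, that the homeomorphism extends to an autohomeomorphism of the ambient space; the paper's approach is heavier but identifies the common homeomorphism type concretely as $\mathbf{\Sigma}$ and exercises exactly the labeling machinery that is reused in Sections \ref{sec:7}--\ref{sec:8} (e.g.\ Theorems \ref{lem:18a} and \ref{thm:23}), where direct back-and-forth arguments would be harder to organize.
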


 \begin{proof}
  Let  $X=\{x_\alpha\colon \alpha<\omega_1=\frak{c}\}\subseteq(2^{\frak{c}})_{\delta}$ be a dense subset and let $\frak{M}_X=\{\mathcal{P}_\alpha\colon \alpha<\frak{c}\}$, where $\calP_\alpha=\{[f]\cap X\colon f\in 2^\alpha\}$.
  If $V\in\mathcal{P}_\alpha$, since $X\subseteq (2^\frak{c})_\delta$ is dense, then there exists $\beta=\inf\{\nu\colon x_\nu\in V\}$.
  Put  $E(V)=x_\beta$, and then put $E(U)=x_\beta$, whenever $E(V)\in U\subseteq V$ and $U\in\bigcup\frak{M}_X$.
  The function $E\colon\bigcup\{\calP_\alpha\colon\alpha<\frak{c}\}\to X$ is a labeling.
Indeed, fix $\alpha<\omega_1$, then the set $\{x_\gamma\colon\gamma<\alpha\}$ is closed, being countable.
But  $\bigcup\frak{M}_X$ is a base for $X$, hence there exists $\beta<\omega_1$ and $V\in\mathcal{P}_\beta$ such that $x_\alpha\in V$ and $V\cap\{x_\gamma\colon\gamma<\alpha\}=\emptyset$.
  Then $E(V)=x_\alpha$, thus $E$ is a surjection.
We have a labeling $E\colon \bigcup\frak{M}_X\to X$ and bijections $S_\alpha$ defined analogously as in the proof of \ref{thm:4} together with Lemma \ref{lem:I} give needed homeomorphism from $\mathbf{\Sigma}$ to $X$, where $[f]\mapsto f^*$ is a labeling with the image $\mathbf{\Sigma}$.
\end{proof}

In order to avoid a modification of our proof of Theorem \ref{thm:4}, we end this section by stating without proof 
that any two dense subsets of $(2^{\omega_1})_\delta$ which have labelings are homeomorphic.

\section{$P$-spaces of cardinality  $\omega_1$}\label{sec:5}

The below lemma is  a counterpart of \cite[Theorem 3]{pw}.

\begin{lem}\label{thm:6}
If $\mathcal{B}$ is a base consisting of clopen sets of a  $P$-space $X$ of cardinality $\omega_1$, then any open cover of $X$ has a refinement, which is a partition and consists of elements of $\mathcal{B}$.
\end{lem}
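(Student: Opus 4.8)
The plan is to construct the desired partition by a transfinite recursion of length at most $\omega_1$, using a fixed enumeration of $X$ to decide which point to cover at each step, and exploiting the fact recorded after Proposition \ref{pro:3} that the clopen sets of a $P$-space form a $\sigma$-algebra. This last fact is what lets partial unions stay clopen, so that they can be subtracted off to keep the chosen pieces pairwise disjoint. First I would enumerate $X=\{x_\alpha\colon\alpha<\omega_1\}$ and fix an open cover $\calU$. At a stage $\alpha$ I assume pairwise disjoint sets $B_\xi\in\calB$ $(\xi<\alpha)$ have been chosen, each contained in some member of $\calU$; if $\bigcup_{\xi<\alpha}B_\xi=X$ the recursion stops, and otherwise I let $x$ be the not-yet-covered point of least index, pick $U\in\calU$ with $x\in U$, and choose $B_\alpha\in\calB$ with $x\in B_\alpha\subseteq U\setminus\bigcup_{\xi<\alpha}B_\xi$.

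The step that must be justified is that such a $B_\alpha$ exists. The set $C_\alpha=\bigcup_{\xi<\alpha}B_\xi$ is clopen whenever $\alpha$ is countable, since a countable union of clopen sets lies in the $\sigma$-algebra of clopen sets; hence $U\setminus C_\alpha$ is a nonempty open set containing $x$, and because $\calB$ is a base there is indeed a $B_\alpha\in\calB$ with $x\in B_\alpha\subseteq U\setminus C_\alpha$. To guarantee that every stage reached is a countable ordinal, I would note that the map sending a stage $\alpha$ to the index of its least uncovered point is strictly increasing, as a point once covered remains covered; being a strictly increasing map into $\omega_1$, it forces the recursion to terminate at some stage $\theta\le\omega_1$, with every stage before termination a countable ordinal. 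This is exactly the regime in which the $\sigma$-algebra argument applies.

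Finally I would verify that $\calP=\{B_\alpha\colon\alpha<\theta\}$ is the required refinement. The sets are pairwise disjoint and each is contained in a member of $\calU$ by construction, so $\calP$ refines $\calU$ and consists of elements of $\calB$; and $\calP$ covers $X$ because a strictly increasing index map satisfies $\mathrm{ind}(x_\alpha)\ge\alpha$, so at each stage all points of smaller index are already covered and no point escapes. The main obstacle is precisely the disjointness demand across uncountably many choices: it is overcome by the cardinality hypothesis $|X|=\omega_1$, which confines every stage of the recursion to a countable ordinal and thereby keeps each partial union $C_\alpha$ inside the $\sigma$-algebra of clopen sets, where it can be removed without leaving the base $\calB$.
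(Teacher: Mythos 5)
Your proof is correct and takes essentially the same approach as the paper's: a transfinite recursion of length at most $\omega_1$ along an enumeration $X=\{x_\alpha\colon\alpha<\omega_1\}$, where at each stage the countable partial union of previously chosen sets is clopen (since the clopen sets of a $P$-space form a $\sigma$-algebra), so it can be subtracted and a disjoint element of $\calB$ chosen inside the relevant cover member. The only cosmetic difference is that the paper handles an already-covered point $x_\alpha$ by repeating $V_0$ as a dummy, whereas you skip ahead to the least uncovered point and argue termination via a strictly increasing index map; the two devices are interchangeable.
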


\begin{proof}
Let $X = \{x_\alpha\colon \alpha<\omega_1\}$ be a   $P$-space.
Fix an open cover $\mathcal{P}$ of $X$.
Since clopen subsets of $X$ constitute a $\sigma$-algebra, define inductively a desired partition  $\{V_\alpha\colon \alpha<\omega_1\}$ as follows.
If  $x_\alpha\in U\in\calP$ and $x_\alpha\notin \bigcup\{V_\beta\colon \beta<\alpha\}$, then choose $V_\alpha\in\calB$, satisfying $x_\alpha\in V_\alpha\subseteq U$ and $V_\alpha\cap \bigcup\{V_\beta\colon\beta<\alpha\}=\emptyset$, otherwise put $V_\alpha=V_0$.
\end{proof}

Since Proposition \ref{cor:5} and Lemma \ref{thm:6}, we clearly have the following.

\begin{cor}\label{cor:13}
If $X$ is a   $P$-space of cardinality  $\omega_1$, then for any countable family $\mathcal{P}$ consisting of open covers there exists a partition of  $X$, which refines any cover from $\calP$.\qed
\end{cor}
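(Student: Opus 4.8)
The plan is simply to compose the two results cited in the hint. First I would invoke Proposition~\ref{cor:5}: given a countable family $\calP=\{\calP_n\colon n<\omega\}$ of open covers of $X$, it produces a single open cover $\calR$ that refines each $\calP_n$ simultaneously. Concretely, the refinement constructed there consists of the clopen sets $V_x=\bigcap\{V_{n,x}\colon n<\omega\}$, where $V_{n,x}\in\calP_n$ is chosen with $x\in V_{n,x}$; by construction each $V_x$ is contained in a member of every $\calP_n$, so $\calR=\{V_x\colon x\in X\}$ is a common refinement of the whole family.

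Next I would collapse $\calR$ to a partition. Since $X$ is a (completely) regular $P$-space, it is $0$-dimensional by Proposition~\ref{pro:3}, so I may fix a base $\calB$ consisting of clopen sets. Applying Lemma~\ref{thm:6} to the \emph{single} open cover $\calR$, together with this base $\calB$, yields a partition $\{V_\alpha\colon\alpha<\omega_1\}\subseteq\calB$ that refines $\calR$.

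Finally I would chain the two refinements: as refinement is transitive, the partition $\{V_\alpha\colon\alpha<\omega_1\}$ refines $\calR$, and $\calR$ refines every $\calP_n$, whence the partition refines every cover in $\calP$, which is exactly the assertion. There is no genuine obstacle here; the only point worth noting—and the reason both results are needed rather than Lemma~\ref{thm:6} alone—is that Lemma~\ref{thm:6} takes a single cover as input, so the countable family must first be reduced to one cover by Proposition~\ref{cor:5}. This is precisely why the corollary is stated as an immediate consequence of the two preceding results.
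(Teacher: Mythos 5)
Your proposal is correct and is exactly the argument the paper intends: the paper derives Corollary~\ref{cor:13} immediately from Proposition~\ref{cor:5} (common refinement of the countable family) followed by Lemma~\ref{thm:6} (collapsing that single cover to a partition from a clopen base), with transitivity of refinement finishing the job. Your observation that Proposition~\ref{pro:3} supplies the clopen base needed to invoke Lemma~\ref{thm:6} is a faithful reading of the paper's standing assumptions.
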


Recall that a cover is point-countable, whenever each point belongs to at most countable many elements of this cover.

\begin{pro}\label{pro:8}
Any base for a  $P$-space of cardinality $\omega_1$ contains a point-countable cover.
\end{pro}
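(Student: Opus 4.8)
The plan is to extract the point-countable cover directly by a transfinite selection, exploiting the fact that in a $P$-space every countable subset is closed. First I would fix an injective enumeration $X=\{x_\alpha\colon\alpha<\omega_1\}$ and keep the given base $\calB$. The crucial preliminary observation is that for each $\alpha<\omega_1$ the initial segment $X_\alpha=\{x_\gamma\colon\gamma<\alpha\}$ is countable, so its complement $X\setminus X_\alpha=\bigcap_{\gamma<\alpha}(X\setminus\{x_\gamma\})$ is a countable intersection of open sets (each $X\setminus\{x_\gamma\}$ is open by $T_1$-ness), hence open because $X$ is a $P$-space. Since the enumeration is injective, $x_\alpha\notin X_\alpha$, so $x_\alpha\in X\setminus X_\alpha$, and I can pick $B_\alpha\in\calB$ with $x_\alpha\in B_\alpha\subseteq X\setminus X_\alpha$.

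Next I would set $\mathcal{C}=\{B_\alpha\colon\alpha<\omega_1\}\subseteq\calB$ and verify the two required properties. That $\mathcal{C}$ covers $X$ is immediate, since $x_\alpha\in B_\alpha$ for every $\alpha$. For point-countability, fix a point $x_\gamma$ and determine the indices $\alpha$ with $x_\gamma\in B_\alpha$. Because $B_\alpha\subseteq X\setminus X_\alpha$, the membership $x_\gamma\in B_\alpha$ forces $x_\gamma\notin X_\alpha$, i.e. $\alpha\leq\gamma$; thus $\{\alpha\colon x_\gamma\in B_\alpha\}\subseteq\gamma+1$, which is countable. Hence each point of $X$ lies in at most countably many members of $\mathcal{C}$, and $\mathcal{C}$ is the desired point-countable cover.

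I do not expect a genuine obstacle here: the only nontrivial ingredient is the closedness of countable sets in a $P$-space, and this is exactly the defining $G_\delta$-property applied to complements of singletons. The one point that must be handled carefully is the bookkeeping of the selection, namely insisting on a one-to-one listing of the $\omega_1$ points so that $x_\alpha\notin X_\alpha$ and the open set $X\setminus X_\alpha$ from which $B_\alpha$ is chosen genuinely contains $x_\alpha$; were the enumeration to repeat points, this step could break down.
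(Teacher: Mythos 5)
Your proof is correct, and it takes a genuinely different route from the paper. The paper runs a transfinite recursion that builds two families at once: clopen sets $V_\alpha$ and base elements $U_\alpha \supseteq V_\alpha$ chosen so that $U_\alpha$ is disjoint from the union $\bigcup\{V_\beta\colon\beta<\alpha\}$ of all previously chosen clopen sets; point-countability then follows because any point lies in some $V_\alpha$ and is thereby excluded from every later $U_\gamma$. That argument leans on the standing assumption that the $P$-space is completely regular, hence (by Proposition \ref{pro:3}) zero-dimensional, and on the fact that clopen sets form a $\sigma$-algebra, so the growing union of the $V_\beta$ stays clopen. Your argument replaces all of this machinery by a single observation: in a $T_1$ $P$-space every countable set is closed, so each initial segment $X_\alpha$ of an injective enumeration has open complement, and you can pick $B_\alpha\in\calB$ with $x_\alpha\in B_\alpha\subseteq X\setminus X_\alpha$; then $x_\gamma\in B_\alpha$ forces $\alpha\leq\gamma$, giving point-countability directly. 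What your approach buys is economy and generality: it works for $P$-spaces in Misra's $T_1$ sense without any regularity hypothesis, and it needs no clopen sets at all. What the paper's approach buys is a byproduct: the auxiliary family $\{V_\alpha\colon\alpha<\omega_1\}$ is a partition of $X$, which ties this proposition to the partition-refinement results (Lemma \ref{thm:6} and Corollary \ref{cor:13}) that the paper uses elsewhere. Your closing caveat about injectivity of the enumeration is exactly the right point to flag; with a repeating enumeration the selection of $B_\alpha$ could indeed fail.
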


\begin{proof}
Let $X = \{x_\alpha\colon \alpha<\omega_1\}$ be a  $P$-space.
Fix a base $\mathcal{B}$ of open subsets.
Choose $U_0\in\mathcal{B}$ such that $x_0\in U_0$, then choose a clopen set  $V_0$ such that $x_0\in V_0\subseteq U_0$.
Suppose that sets $U_\beta\in\mathcal{B}$ and clopen sets $V_\beta$ are defined for $\beta<\alpha$ in such a way that
  \begin{itemize}
  \item $\{x_\beta\colon \beta<\alpha\}\subseteq\bigcup\{V_\beta\colon \beta<\alpha\}$;
  \item If $\gamma<\beta<\alpha$, then $V_\gamma\cap U_\beta=\emptyset$.
\end{itemize}
If  $\delta<\omega_1$ is the minimal ordinal such that $x_{\delta}\notin \bigcup\{V_\beta\colon \beta<\alpha\}$, then choose $U_\alpha\in\mathcal{B}$ such that
$$x_{\delta}\in U_\alpha\mbox{ and } U_\alpha\cap \bigcup\{V_\beta\colon \beta<\alpha\}=\emptyset$$
and fix a clopen set $V_\alpha$ such that $x_{\delta}\in V_\alpha\subseteq U_\alpha$.
By the definition, the family $\{V_\alpha\colon \alpha<\omega_1\}$ is a partition on $X$.
But the family $\{U_\alpha\colon \alpha<\omega_1\}\subseteq\mathcal{B}$ is a point-countable open cover.
Indeed, if $x\in V_\alpha$, then $x\notin U_\gamma$ for each $\gamma>\alpha$, hence the set $\{\beta\colon x\in U_\beta\}$ is countable.
\end{proof}

As far as we are aware, covering properties of $P$-spaces have not been deeply  investigated.
If a $P$-space is of cardinality or weight $\omega_1$, then it is paracompact.
We do not know when a paracompact $P$-space is totally paracompact, for the definition of totally paracompactness, see \cite{lel}.

\section{$P$-spaces of weight  $\omega_1$}\label{sec:6}
If $X$ is a $P$-space of weight $\omega_1$, then any open cover of $X$ has a refinement which is a partition.
Indeed, $X$ has a base of cardinality $\omega_1$ which consists of clopen sets.
If $\calU$ is an open cover of $X$, then let $\{V_\alpha\colon\alpha<\omega_1\}$ be a refinement of $\calU$ which consists of clopen sets.
For each $\alpha<\omega_1$, put
$$U_\alpha=V_\alpha\setminus\bigcup\{V_\beta\colon\beta<\alpha\}.$$
The sets $U_\alpha$ constitute the desired  partition.

Under the Continuum Hypothesis, by Lemma \ref{lem:9}, the space $(2^{\omega_1})_\delta$ is of weight $\omega_1=\frak{c}$.
But without the Continuum Hypothesis, any open cover of $(2^{\omega_1})_\delta$ has a refinement which is a partition.
Indeed, the family
$$\calB=\{[f]\colon f\in 2^\alpha\mbox{ and }\alpha<\omega_1\}$$
is a base for $(2^{\omega_1})_\delta$.
If $\calU$ is an open cover of $(2^{\omega_1})_\delta$, then let $\calV\subseteq\calB$ be a refinement of $\calU$.
The family of all maximal elements of $\calV$, with respect to the inclusion, is the desired partition.
In particular, we see that $(2^{\omega_1})_\delta$ is a paracompact space.

If $X$ is a $P$-space, then a family $\frak{M}_X=\{\mathcal{P}_\alpha\colon \alpha<\omega_1\}$ is called $P$\textit{-matrix}, whenever
\begin{enumerate}
\item Each $\mathcal{P}_\alpha$ is a partition of $X$.
\item  If $\beta<\alpha$, then $\mathcal{P}_\alpha$ refines $\mathcal{P}_\beta$.                                    
\item The union $\bigcup\frak{M}_X$ is a base for $X$.
\item If $\alpha$ is an infinite limit ordinal and $U\in\calP_\alpha$, then
  $$U=\bigcap\{V\in\calP_\beta\colon \beta<\alpha\mbox{ and }U\subseteq V\}.$$
\end{enumerate}

If a $P$-space $X$ has a $P$-matrix $\{\calP_\alpha\colon \alpha<\omega_1\}$, then any open cover of $X$ has a refinement  which is a partition, since a slightly modified argument used for $(2^{\omega_1})_\delta$ works.
Indeed, if $\calU$ is an open cover of $X$, then let $\calV\subseteq\bigcup\{\calP_\alpha\colon\alpha<\omega_1\}$ be a refinement of $\calU$.
The family of all maximal elements of $\calV$, with respect to the inclusion, is the desired partition.

\begin{lem}\label{lem:16}
Any $P$-space of weight $\omega_1$ has a $P$-matrix.
\end{lem}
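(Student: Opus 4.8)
The plan is to build the $P$-matrix $\frak{M}_X=\{\calP_\alpha\colon\alpha<\omega_1\}$ by transfinite recursion on $\alpha<\omega_1$, maintaining conditions (1) and (2) at every step while securing (3) at successor stages and (4) at limit stages. Since $X$ is a completely regular $P$-space, Proposition \ref{pro:3} shows that $X$ is $0$-dimensional, and as its weight is $\omega_1$ we may fix a base $\calB=\{B_\alpha\colon\alpha<\omega_1\}$ consisting of clopen sets. Recall also that the clopen subsets of $X$ form a $\sigma$-algebra, so finite and countable Boolean combinations of the $B_\alpha$ remain clopen.

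For the recursion I would start with the trivial partition $\calP_0=\{X\}$. At a successor step, having $\calP_\alpha$, I would let $\calP_{\alpha+1}$ be the common refinement of $\calP_\alpha$ and the two-element partition $\{B_\alpha,X\setminus B_\alpha\}$, that is, the family of nonempty sets of the form $V\cap B_\alpha$ and $V\setminus B_\alpha$ with $V\in\calP_\alpha$. This is again a partition into clopen sets, it refines $\calP_\alpha$, and---crucially for (3)---every element of $\calP_{\alpha+1}$ lies inside $B_\alpha$ or inside $X\setminus B_\alpha$, so $B_\alpha$ is a union of members of $\calP_{\alpha+1}$. Since $\calB$ is a base, this makes $\bigcup\frak{M}_X$ a base for $X$: given $x\in O$ with $O$ open, pick $B_\alpha$ with $x\in B_\alpha\subseteq O$, and then the member of $\calP_{\alpha+1}$ through $x$ is contained in $B_\alpha$, hence lies between $x$ and $O$.

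The heart of the argument is the limit stage. At an infinite limit ordinal $\lambda<\omega_1$, for each $x\in X$ let $V_\beta(x)$ denote the unique element of $\calP_\beta$ containing $x$; by the refinement condition (2) these sets decrease as $\beta$ increases. I would set $U_x=\bigcap\{V_\beta(x)\colon\beta<\lambda\}$ and define $\calP_\lambda=\{U_x\colon x\in X\}$; this is exactly the common refinement produced by Proposition \ref{cor:5}. Because $\lambda$ is countable and $X$ is a $P$-space, $U_x$ is a countable intersection of clopen sets and hence clopen. The family $\calP_\lambda$ is a partition: each $x$ lies in $U_x$, and if $U_x\cap U_y\neq\emptyset$ then $V_\beta(x)=V_\beta(y)$ for every $\beta<\lambda$ (two members of the partition $\calP_\beta$ that meet must coincide), whence $U_x=U_y$. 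It refines every earlier $\calP_\beta$ since $U_x\subseteq V_\beta(x)$. Finally, for condition (4) note that the only $V\in\calP_\beta$ with $U_x\subseteq V$ is $V_\beta(x)$, so $\bigcap\{V\in\calP_\beta\colon\beta<\lambda\text{ and }U_x\subseteq V\}=\bigcap\{V_\beta(x)\colon\beta<\lambda\}=U_x$, as required.

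The main obstacle is concentrated in the limit step, where $\calP_\lambda$ must simultaneously be a partition, consist of clopen sets, refine all previous partitions, and satisfy the exact intersection identity (4). The $P$-space hypothesis is precisely what makes this possible: it guarantees that the countable intersections $U_x$ are open, indeed clopen, so that $\calP_\lambda$ is a genuine clopen partition rather than merely a decomposition into $G_\delta$-sets. The weight hypothesis enters only in providing a clopen base enumerated in order type $\omega_1$, which drives the successor steps that make $\bigcup\frak{M}_X$ a base. Conditions (1) and (2) are maintained throughout by construction and the transitivity of refinement, so the verification away from the limit stage is routine bookkeeping.
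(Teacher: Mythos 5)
Your proof is correct and takes essentially the same approach as the paper: start from the trivial partition $\calP_0=\{X\}$, refine at successor stages by $\{B_\alpha, X\setminus B_\alpha\}$ for a clopen base enumerated in type $\omega_1$, and at limit stages take the countable intersections along all previous partitions, which the $P$-space property keeps clopen. The paper phrases the limit step as intersections of maximal chains in $\bigcup\{\calP_\beta\colon\beta<\alpha\}$, which coincide with your sets $U_x$; your point-indexed formulation is merely a cosmetic variant that automatically discards empty chain intersections.
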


\begin{proof}
Let $X$ be a $P$-space with a base  $\{U_{\alpha+1}\colon \alpha<\omega_1\}$ consisting of clopen sets.
Let $\mathcal{P}_0=\{X\}$.
Assume that partitions $\{\mathcal{P}_\beta\colon \beta<\alpha\}$ are already defined.
Let
$$\mathcal{P}_\alpha^*=\{\bigcap L\colon L\mbox{ is a maximal chain in }\bigcup\{\mathcal{P}_\beta\colon\beta<\alpha\}\}.$$
If $\alpha$ is an infinite limit ordinal, then put $\calP_\alpha=\calP_\alpha^*$.
If $\alpha$ is not a limit ordinal, then let $\calP_\alpha$ be a partition which refines  $\{U_\alpha,X\setminus U_\alpha\}$ and the partition  $\calP_\alpha^*$.
The family $\{\mathcal{P}_\alpha\colon\alpha<\omega_1\}$ is the desired $P$-matrix.
\end{proof}

Let $(\gamma_\alpha)$ be the  increasing enumeration of all countable infinite limit ordinals.
Put $Q_\alpha=2^{\gamma_\alpha}$.
Since $(2^{\omega_1})_\delta=\varprojlim\{2^\alpha,\pi^\alpha_\beta,\omega_1\}$ and the family of countable limit ordinals is cofinal in $\omega_1$, we get $$(2^{\omega_1})_\delta=\varprojlim\{Q_\alpha,\pi^\alpha_\beta,\omega_1\},$$
where ${\pi}^\alpha_\beta\colon Q_\alpha\to Q_\beta$ and $\pi^\alpha_\beta(f)=f|_{\gamma_\beta}$: the symbol $\pi^\alpha_\beta$ has been used in two different meanings, but this does not lead  to confusion.
Note that, each $Q_\alpha$ is of cardinality $\frak{c}$. 

\begin{thm}\label{thm:16}
Any $P$-space  of weight $\omega_1$ can be embedded into $(2^{\omega_1})_\delta$.
\end{thm}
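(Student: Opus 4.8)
The plan is to use the $P$-matrix machinery developed in Lemma \ref{lem:16} together with the embedding criterion of Lemma \ref{lem:I}. Let $X$ be a $P$-space of weight $\omega_1$. By Lemma \ref{lem:16}, $X$ has a $P$-matrix $\frak{M}_X=\{\calP_\alpha\colon\alpha<\omega_1\}$. The idea is to read this $P$-matrix as an enriched inverse system whose inverse limit is $X$, and then to embed that system into the canonical system defining $(2^{\omega_1})_\delta$. Concretely, I would set $R_\alpha=\calP_{\gamma_\alpha}$ (indexing by the countable infinite limit ordinals, as in the paragraph preceding the theorem, so that condition (4) of the $P$-matrix guarantees the limit-level coherence required for an inverse system over $\omega_1$), with bonding maps $r^\alpha_\beta\colon R_\alpha\to R_\beta$ sending a block $U\in\calP_{\gamma_\alpha}$ to the unique block of $\calP_{\gamma_\beta}$ containing it (well-defined by refinement, property (2)). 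Because $\bigcup\frak{M}_X$ is a base (property (3)) and each $\calP_\alpha$ is a partition, one checks that $X$ is homeomorphic to $\varprojlim\{R_\alpha,r^\alpha_\beta,\omega_1\}$: a thread is exactly a decreasing sequence of blocks whose intersection is a single point, and the base condition makes the assignment of threads to points a homeomorphism onto $X$.

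Next I would produce one-to-one maps $s_\alpha\colon Q_\alpha\to R_\alpha$ commuting with the bonding maps, where $Q_\alpha=2^{\gamma_\alpha}$ as in the system $(2^{\omega_1})_\delta=\varprojlim\{Q_\alpha,\pi^\alpha_\beta,\omega_1\}$ displayed before the theorem. Since each $R_\alpha$ is a partition of $X$, it has cardinality at most the weight $\omega_1\leq\frak{c}=|Q_\alpha|$, so there is room to inject $R_\alpha$ into $Q_\alpha$; but the maps must be chosen coherently with the tree structure. The natural construction is by recursion on $\alpha$: having fixed $s_\beta$ for $\beta<\alpha$, I extend to level $\alpha$ so that for each block $W\in R_\beta$ the fibre $(r^\alpha_\beta)^{-1}(W)$ is mapped injectively into the fibre $(\pi^\alpha_\beta)^{-1}(s_\beta(W))$; this is possible because the target fibre, consisting of the extensions in $2^{\gamma_\alpha}$ of a fixed element of $2^{\gamma_\beta}$, again has cardinality $\frak{c}$, while the source fibre has cardinality at most $\omega_1$. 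At limit stages the maps are forced by the coherence and by property (4). The commutativity of the resulting square (Diagram \ref{fig:2}, in the direction of the projections) is exactly the fibrewise refinement just arranged.

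Having arranged injective, projection-commuting maps $s_\alpha\colon Q_\alpha\to R_\alpha$ in the wrong direction relative to Lemma \ref{lem:I}, I would instead view the construction as embedding the $X$-system into the $(2^{\omega_1})_\delta$-system, i.e. I want one-to-one maps from the $R_\alpha$'s into the $Q_\alpha$'s. Relabelling, let $t_\alpha\colon R_\alpha\to Q_\alpha$ be the injections built above (injecting each level of the $X$-tree into the corresponding level of the full binary tree of countable limit height), chosen so that $\pi^\alpha_\beta\circ t_\alpha=t_\beta\circ r^\alpha_\beta$. Then Lemma \ref{lem:I}, applied with $\mathbb{Q}$ the system for $X$ and $\mathbb{R}$ the system $\{Q_\alpha,\pi^\alpha_\beta,\omega_1\}$, yields $\varprojlim(\text{system for }X)\subset_h\varprojlim\{Q_\alpha,\pi^\alpha_\beta,\omega_1\}$, that is, $X\subset_h(2^{\omega_1})_\delta$, which is the assertion.

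The main obstacle I anticipate is the coherent recursive construction of the injections $t_\alpha$ across limit stages while keeping the projection squares commutative. One must verify that at a limit level $\alpha$ the maps already defined on the cofinally many predecessor levels determine a consistent injection on $R_\alpha$, using precisely property (4) of the $P$-matrix (each block at a limit level is the intersection of its predecessors) to match it with the corresponding intersection of clopen sets in $2^{\gamma_\alpha}$; the successor step then only needs to split a single fibre, which is harmless since the binary branching of $2^{\gamma_{\alpha+1}}$ over $2^{\gamma_\alpha}$ provides arbitrarily many disjoint extensions. A secondary point to check is that the resulting map on limits is genuinely injective and continuous, i.e. that distinct threads of $X$, separated by the base $\bigcup\frak{M}_X$, are sent to distinct threads of $(2^{\omega_1})_\delta$; this follows because $t_\alpha$ is injective on each level, so two threads differing at level $\alpha$ have images differing at level $\alpha$.
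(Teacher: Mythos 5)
Your proposal follows essentially the same route as the paper's proof: take a $P$-matrix from Lemma \ref{lem:16}, regard it as an inverse system of discrete partition-levels with inclusion-induced bonding maps, recursively build level-wise injections into the system $\{Q_\alpha,\pi^\alpha_\beta,\omega_1\}$, $Q_\alpha=2^{\gamma_\alpha}$, commuting with the projections (successor steps by the cardinality estimate on fibres, since a partition has at most $\omega_1\leq\frak{c}$ blocks while each fibre of $\pi^{\alpha+1}_\alpha$ has cardinality $\frak{c}$; limit steps forced by coherence and property (4)), and then pass to the inverse limits.

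One correction is needed. Your claim that $X$ is \emph{homeomorphic} to $\varprojlim\{R_\alpha,r^\alpha_\beta,\omega_1\}$, justified by ``a thread is exactly a decreasing sequence of blocks whose intersection is a single point,'' is false in general: a thread may have empty intersection in $X$. For instance, the natural $P$-matrix of the dense subspace $\mathbf{\Sigma}\subseteq(2^{\omega_1})_\delta$, namely $\calP_\alpha=\{[f]\cap\mathbf{\Sigma}\colon f\in 2^\alpha\}$, has inverse limit $(2^{\omega_1})_\delta$ itself, which strictly contains $\mathbf{\Sigma}$. What is true, and what the paper asserts (citing Proposition 2.5.5 of \cite{eng}), is that $X$ \emph{embeds} into that inverse limit; this weaker statement is all your argument actually uses, since you then compose with the embedding of the limit into $(2^{\omega_1})_\delta$. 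After this repair (and the direction slip in the definition of $s_\alpha$, which you catch and fix yourself), your proof is correct and coincides with the paper's.
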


\begin{proof}

Let $X$ be a $P$-space of weight $\omega_1$ and  let $\{\calP_\alpha\colon \alpha<\omega_1\}$ be a $P$-matrix for $X$.
Thus, we have an inverse system $\{\calP_\alpha,r^\alpha_\beta ,\omega_1\}$, where each $r^\alpha_\beta$ is the restriction of the inclusion and each $\calP_\alpha$ is equipped with the discrete topology.
Each $x\in X$ determines the thread in $\varprojlim\{\calP_\alpha,r^\alpha_\beta,\omega_1\}$, since  $\bigcup\{\calP_\alpha\colon\alpha<\omega_1\}$ is a base for $X$.
Hence  $X$   can be embedded into $\varprojlim\{\calP_\alpha,r^\alpha_\beta,\omega_1\}$, by Proposition 2.5.5 \cite{eng}.

To show that  $\varprojlim\{\calP_\alpha,r^\alpha_\beta,\omega_1\}$ can be embedded into $(2^{\omega_1})_\delta=\varprojlim\{Q_\alpha,\pi^\alpha_\beta,\omega_1\}$, where $Q_\alpha=2^{\gamma_\alpha}$, we shall define a sequence of injections  $S_\alpha\colon \calP_\alpha\to Q_\alpha$ such that 
Diagram \ref{fig:3}
      is commutative.
\begin{figure}[h]
$$\xymatrix{
	\calP_\beta \ar@{->}@{-->}[d]_{S_\beta} && \calP_\alpha\ar[ll]_{r^\alpha_\beta} \ar@{->}@{-->}[d]^{S_\alpha} \\
	Q_\beta       && Q_\alpha\ar[ll]_{\pi^\alpha_\beta}
      }$$\caption{}\label{fig:3}
      \end{figure}
      
      Let $S_0\colon \calP_0\to Q_0$ be an arbitrary injection.
Assume that for each $\beta<\alpha$ an injection $S_\beta$ is defined such that appropriate diagrams are commutative, i.e. $S_\gamma\circ r^\beta_\gamma=\pi^\beta_\gamma\circ S_\beta$ for $\gamma<\beta<\alpha$.
If $\alpha$ is a limit ordinal and $U\in \calP_\alpha$, then
$$S_\alpha(U)=\bigcup\{S_\beta(V)\colon V\in\calP_\beta\mbox{ and }V\supseteq U\mbox{ and }\beta<\alpha \}.$$

Let $\alpha=\beta+1$.
For each $V\in \calP_\beta$, let $\calP_V=\{U\in\calP_\alpha\colon U\subseteq V\}$ and let $Q_V=\{f\in Q_\alpha\colon S_\beta(V)\subseteq f\}$.
Choose arbitrary injections $S_V\colon \calP_V\to Q_V$ for each $V\in\calP_\beta$.
Then put  $S_\alpha=\bigcup\{S_V\colon V\in\calP_\beta\}$.

Since each $S_\alpha$ is an injection, hence by Lemma 2.5.9, \cite{eng}, we obtain the desired  embedding.
\end{proof}

Under the Continuum Hypothesis, the space $(2^{\omega_1})_\delta$ has the greatest dimensional type in the class of all $P$-spaces of weight $\omega_1$.
But if the Continuum Hypothesis fails, then our argumentation does not work, since $(2^{\omega_1})_\delta$ is of weight $\frak{c}$.

\section{$P$-spaces of cardinality and weight $\omega_1$}\label{sec:7}

Let $X$ be a  $P$-space with a $P$-matrix $\frak{M}_X=\{\mathcal{P}_\alpha\colon \alpha<\omega_1\}$. 
If $X$ has weight $\omega_1$, then such a $P$-matrix exists by Lemma \ref{lem:16}.
But, if   $Z\subseteq X$ is a  dense subset of cardinality and weight $\omega_1$, then there exists a labeling   $E\colon \bigcup\frak{M}_X\to Z$. 

\begin{lem}\label{lem:18b}
If $X$ is a $P$-space of cardinality and weight $\omega_1$, then  there exists a labeling $E\colon\bigcup\frak{M}_X\to X$ for any $P$-matrix $\frak{M}_X$.
\end{lem}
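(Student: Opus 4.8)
The plan is to reproduce, in the abstract setting, the labeling produced in the proof of Theorem \ref{thm:14}, replacing the concrete tree of finite-support conditions by the tree $T=\bigcup\frak{M}_X$ whose nodes are the clopen sets occurring in the given $P$-matrix (ordered so that $V\leq U$ exactly when $U\subseteq V$, with $V$ on a coarser level, matching the inverse system $\{\calP_\alpha,r^\alpha_\beta,\omega_1\}$). First I would fix an enumeration $X=\{x_\alpha\colon\alpha<\omega_1\}$, which is available since $|X|=\omega_1$. For a nonempty node $V\in\bigcup\frak{M}_X$ the set $\{\nu\colon x_\nu\in V\}$ is a nonempty set of ordinals, so it has a least element, and I would simply set
$$E(V)=x_{\beta(V)},\qquad\text{where }\beta(V)=\min\{\nu\colon x_\nu\in V\}.$$
By construction $x_{\beta(V)}\in V$, which is precisely the requirement $V\in E(V)$ once $X$ is identified with the set of branches of $T$ via $x\mapsto\{W\in\bigcup\frak{M}_X\colon x\in W\}$ (each $\calP_\alpha$ being a partition, exactly one node of every level contains $x$, so this is a branch of length $\omega_1$).

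Next I would check the labeling implication. The hypothesis $U\in E(V)$ translates into $E(V)\in U$, so suppose $V\leq U$, i.e. $U\subseteq V$, and $x_{\beta(V)}\in U$. Because $U\subseteq V$ we have $\beta(U)\geq\beta(V)$, while $x_{\beta(V)}\in U$ forces $\beta(U)\leq\beta(V)$; hence $\beta(U)=\beta(V)$ and $E(U)=E(V)$. Thus the implication holds automatically for the least-index choice, and in contrast with a naive recursive construction no inductive bookkeeping is needed to maintain it.

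The crux is surjectivity, and this is where the $P$-space hypothesis is used. For each $\alpha<\omega_1$ the set $\{x_\nu\colon\nu<\alpha\}$ is countable, and its complement $\bigcap_{\nu<\alpha}(X\setminus\{x_\nu\})$ is a countable intersection of open sets, hence open because $X$ is a $P$-space; therefore $\{x_\nu\colon\nu<\alpha\}$ is closed. Since $\bigcup\frak{M}_X$ is a base by property (3) of a $P$-matrix, I can choose $V\in\bigcup\frak{M}_X$ with $x_\alpha\in V$ and $V\cap\{x_\nu\colon\nu<\alpha\}=\emptyset$. Then $\beta(V)=\alpha$, so $E(V)=x_\alpha$, and every point of $X$ is attained; hence $E$ is a surjection onto $X$ and a labeling.

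I expect this last step to be the only genuine obstacle: it is exactly here that both the $P$-space property and the assumption $|X|=\omega_1$ enter, since one needs every countable initial segment of the enumeration to be closed so that the next point can be separated from it by a basic clopen set. The weight-$\omega_1$ hypothesis, by contrast, is needed only to guarantee via Lemma \ref{lem:16} that a $P$-matrix exists at all; for the construction itself, any prescribed $P$-matrix together with $|X|=\omega_1$ suffices.
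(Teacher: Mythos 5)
Your proposal is correct and is essentially the paper's own proof: the paper also defines $E(V)=x_\beta$ with $\beta=\inf\{\nu\colon x_\nu\in V\}$ and defers the verification to the proof of Theorem \ref{thm:14}, which uses exactly your two observations (the least-index choice makes the implication $E(V)\in U\subseteq V\Rightarrow E(U)=E(V)$ automatic, and surjectivity follows because each initial segment $\{x_\nu\colon\nu<\alpha\}$ is countable, hence closed in a $P$-space, so some basic $V\in\bigcup\frak{M}_X$ separates $x_\alpha$ from it). Your write-up is merely more explicit than the paper's, so nothing further is needed.
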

\begin{proof}
Let  $X=\{x_\alpha\colon \alpha<\omega_1\}$ and let $\frak{M}_X=\{\mathcal{P}_\alpha\colon \alpha<\omega_1\}$ be a $P$-matrix.
  If $V\in\mathcal{P}_\alpha$, then let $\beta=\inf\{\nu\colon x_\nu\in V\}$ and then put  $E(V)=x_\beta$.
The function $E\colon\bigcup\frak{M}_X\to X$ is a labeling, which can be checked as in the proof of Theorem \ref{thm:14}.
\end{proof}
  
 The below theorem, under the Continuum Hypothesis, follows from \cite[Lemma 2.2]{dow}.
 
  \begin{thm}\label{lem:18a}
Let  $Y$ be a $P$-space with a $P$-matrix
$$\frak{M}_Y=\{\mathcal{P}_\alpha\colon\alpha<\omega_1\}$$
such that each $\calP_\alpha$ is of cardinality at most $\frak{c}$.
If there exists a labeling  $E\colon \frak{M}_Y\to Y,$ then $Y$ can be embedded into $\mathbf{\Sigma}$.
\end{thm}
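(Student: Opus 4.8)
The plan is to construct an embedding of $Y$ into $\mathbf{\Sigma}$ by building, level by level, a commutative ladder of injections between the inverse system $\{\calP_\alpha, r^\alpha_\beta, \omega_1\}$ associated to the $P$-matrix $\frak{M}_Y$ and the system $\{2^\alpha, \pi^\alpha_\beta, \iota^\alpha_\beta, \omega_1\}$ whose $\mathbf{\Sigma}_{\mathbb{P}}$ is exactly $\mathbf{\Sigma}$. The labeling $E$ is what lets us match $\mathbf{\Sigma}$ (rather than all of $\varprojlim$) as the target: recall from Section~\ref{sec:4} that $\mathbf{\Sigma}=\mathbf{\Sigma}_{\mathbb{P}}$ consists precisely of the threads that are eventually constant in the sense $p_\beta=\iota^\beta_\gamma(p_\gamma)$, i.e.\ branches that stabilise. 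The role of $E$ is to pick out, for each partition element $V\in\calP_\alpha$, a distinguished point $E(V)\in Y$ that it ``labels''; the labeling condition ($x\le y$ and $y\in E(x)$ imply $E(x)=E(y)$) guarantees that once a branch of the $\calP$-tree has reached the level at which its label is decided, it may be collapsed into a stabilising thread on the $\mathbf{\Sigma}$ side. This is the mechanism by which $Y$ embeds into the \emph{$\Sigma$}-part and not merely into the full inverse limit.

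Concretely, the first step is to define injections $S_\alpha\colon\calP_\alpha\to X_\alpha$ (where $X_\alpha=2^\alpha$ carries the discrete topology and $\iota^\alpha_\beta$ is as in Section~\ref{sec:4}) so that Diagram~\ref{fig:2} commutes, exactly as in the proof of Theorem~\ref{thm:16}: at successor stages $\alpha=\beta+1$, for each $V\in\calP_\beta$ inject the fibre $\calP_V=\{U\in\calP_\alpha\colon U\subseteq V\}$ into the fibre above $S_\beta(V)$, which is possible because $|\calP_\alpha|\le\frak{c}=|2^\alpha|$ at the relevant levels (here the cardinality hypothesis on each $\calP_\alpha$ is essential, and this is where it enters); at limit stages take unions. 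The second and decisive step is to force compatibility with the \emph{injections} $\iota^\alpha_\beta$, not just the projections $\pi^\alpha_\beta$: whenever the labeling $E$ certifies that $V\in\calP_\alpha$ has already reached its ``final'' label (so that every finer $U\subseteq V$ carries the same $E$-value), the injections $S$ must be chosen so that the corresponding thread becomes $\iota$-stable from level $\alpha$ on, i.e.\ $S_\gamma(U)=\iota^\gamma_\alpha(S_\alpha(V))$ for all finer $U$ and all $\gamma>\alpha$. One then invokes Lemma~\ref{lem:I} (with the $S_\alpha$ as the vertical maps and all of them injective) to conclude that $\varprojlim\{\calP_\alpha\}\subset_h\varprojlim\mathbb{P}$, and the compatibility with $E$ upgrades this to $Y\subset_h\mathbf{\Sigma}_{\mathbb{P}}=\mathbf{\Sigma}$.

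I would order the work as: (i) recall that $Y$ embeds into $\varprojlim\{\calP_\alpha,r^\alpha_\beta,\omega_1\}$ via Proposition~2.5.5 of \cite{eng}, since $\bigcup\frak{M}_Y$ is a base; (ii) build the $S_\alpha$ by transfinite recursion, discharging the successor and limit cases as above; (iii) verify the $\iota$-stability clause using the defining property of the labeling $E$; (iv) apply Lemma~\ref{lem:I} to get the embedding and check its image lands in $\mathbf{\Sigma}$.

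\textbf{The main obstacle} I anticipate is step (iii): ensuring that the recursively chosen injections are simultaneously compatible with the \emph{projections} (forcing Diagram~\ref{fig:2} to commute, i.e.\ $S_\beta\circ r^\alpha_\beta=\pi^\alpha_\beta\circ S_\alpha$) and with the \emph{injections} $\iota^\alpha_\beta$ along those branches that $E$ has already labelled. The tension is that at a successor stage one is free to choose $S_V\colon\calP_V\to Q_V$ arbitrarily for the \emph{active} branches, but on branches flagged by $E$ as finished the value is \emph{forced} to be $\iota^\alpha_\beta(S_\beta(V))$; one must check that this forced choice still lands in the correct fibre $Q_V$ (it does, since $\iota$ and $\pi$ satisfy $\pi^\alpha_\beta\circ\iota^\alpha_\beta=\id$) and that injectivity is preserved when the forced and free assignments are amalgamated. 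Verifying that the resulting thread of a labelled branch is genuinely $\iota$-eventually-constant—so that it lies in $\mathbf{\Sigma}$ and not merely in $\varprojlim\mathbb{P}$—is the crux, and it rests entirely on the implication $x\le y,\ y\in E(x)\Rightarrow E(x)=E(y)$ built into the definition of a labeling.
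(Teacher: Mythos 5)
Your overall plan coincides with the paper's: realise $Y$ as the $\Sigma$-part of an enriched inverse system built from the $P$-matrix, construct a ladder of injections $S_\alpha$ commuting both with the projections and with the labeling-induced injections, and conclude via Lemma \ref{lem:I} (resp.\ Lemma 2.5.9 of Engelking). However, two of your steps fail as written. First, the target system cannot be $\{2^\alpha,\pi^\alpha_\beta,\iota^\alpha_\beta,\omega_1\}$: at a successor stage the fibre of $\pi^{\beta+1}_\beta$ over $S_\beta(V)$ is the two-element set $\{S_\beta(V)^\frown 0,\,S_\beta(V)^\frown 1\}$, whereas the fibre $\calP_V=\{U\in\calP_{\beta+1}\colon U\subseteq V\}$ may have cardinality $\frak{c}$, so the required fibre-wise injection simply does not exist; your justification ``$|\calP_\alpha|\le\frak{c}=|2^\alpha|$'' concerns whole levels rather than fibres (and fails anyway for finite $\alpha$). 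This is precisely why the paper re-indexes along the countable infinite limit ordinals and works with $Q_\alpha=2^{\gamma_\alpha}$, defined just before Theorem \ref{thm:16}, so that every successor fibre has cardinality $\frak{c}$; the proof of Theorem \ref{thm:16}, which you invoke as a template, already uses this re-indexed system, not the levels $2^\alpha$.

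Second, your ``decisive step'' misreads what the labeling provides. You trigger the stabilisation clause when ``every finer $U\subseteq V$ carries the same $E$-value''; this essentially never happens, because distinct finer elements of a common level are pairwise disjoint and each satisfies $E(U)\in U$, so they carry distinct labels. The labeling condition stabilises $E$ only along the single branch through $E(V)$, i.e.\ on those finer $U$ which contain the point $E(V)$. Moreover, imposing $S_\gamma(U)=\iota^\gamma_\alpha(S_\alpha(V))$ for \emph{all} finer $U$ destroys the injectivity of $S_\gamma$. The correct constraint, which is the paper's, concerns exactly one element of each fibre: writing $\eta^\alpha_\gamma(V)$ for the unique element of $\calP_\alpha$ containing $E(V)$, one requires $S_\alpha(\eta^\alpha_\gamma(V))=\iota^\alpha_\gamma(S_\gamma(V))$, all remaining members of the fibre being mapped freely but injectively into the remaining part of the target fibre. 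Finally, to see that the image of $Y$ lands in $\mathbf{\Sigma}$ you need the surjectivity of the labeling: every $x\in Y$ equals $E(V)$ for some $V$, hence its thread is $\eta$-stable above the level of $V$, which is what makes $Y$ homeomorphic to $\mathbf{\Sigma}_{\mathbb{P}}$; in your outline this point is only gestured at, yet it is the step where ``$Y$ embeds into the inverse limit'' is upgraded to ``$Y$ embeds into $\mathbf{\Sigma}$''.
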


\begin{proof}
Fix a $P$-space $Y$ with a $P$-matrix
$$\frak{M}_Y=\{\mathcal{P}_\alpha\colon\alpha<\omega_1\},$$
such that each $\calP_\alpha$ is of cardinality at most $\frak{c}$.
Let  $E\colon \bigcup\frak{M}_Y\to Y$ be a labeling.
Analogously as in the proof of Theorem \ref{thm:16} we shall  define Diagram \ref{fig:5}, which is a version of Diagram \ref{fig:2}, where $\iota^\alpha_\beta\colon Q_\beta\to Q_\alpha$ and $\pi^\alpha_\beta\colon Q_\alpha\to Q_\beta$ are defined just before Theorem \ref{thm:16}.
\begin{figure}[h]
$$\xymatrix{
	\mathcal{P}_\beta \ar@<1ex>@{--{>}}[rr]^{\eta^\alpha_\beta}\ar@{-->}[d]_{S_\beta} && \mathcal{P}_\alpha\ar[ll]^{r^\alpha_\beta} \ar@{->}@{--{>}}[d]^{S_\alpha} \\
	Q_\beta\ar@<1ex>@{--{>}}[rr]^{\iota^\alpha_\beta}       && Q_\alpha\ar[ll]^{\pi^\alpha_\beta}
      }$$\caption{}\label{fig:5}
      \end{figure}

      But injections $\eta^\alpha_\beta\colon \calP_\beta\to \calP_\alpha$ are determined by the labeling $E\colon\bigcup\frak{M}_Y\to Y$, i.e. $\eta^\alpha_\beta(U)\in\calP_\alpha$ and $r^\alpha_\beta(V)\in\calP_\beta$ are  unique elements such that $E(U)\in \eta^\alpha_\beta(U)$ and $r^\alpha_\beta(V)\supseteq V$, for $U\in\calP_\beta$ and $V\in\calP_\alpha$.
Thus, we have defined two enriched inverse systems $\mathbb{Q}=\{Q_\alpha,\pi^\alpha_\beta,\iota^\alpha_\beta,\omega_1\}$ and $\mathbb{P}=\{\calP_\alpha,r^\alpha_\beta,\eta^\alpha_\beta,\omega_1\}$, so it remains to define injections $S_\alpha$, which will be done by a modification of the proof of Theorem \ref{thm:4}.
      Namely, let $S_0\colon\calP_0\to Q_0$ be an injection.
      Let $\alpha<\omega_1$.
      Assume that we have defined a sequence of injections  $S_\gamma\colon \calP_\gamma\to Q_\gamma$ for $\gamma<\alpha$, such that
       the diagrams obtained from Diagram \ref{fig:5} by replacing $\alpha$ with $\gamma$   are commutative, where $\beta<\gamma<\alpha$.

      If $\alpha=\gamma+1$, then fix $V\in\calP_\gamma$.
      Choose an injection
      $$S^V_\alpha\colon \{U\in\calP_{\alpha}\colon U\subseteq V\}\to \{W\in Q_{\alpha}\colon W\subseteq S_\gamma(V)\}$$
      such that $S^V_\alpha(\eta^\alpha_\gamma(V))=\iota^\alpha_\gamma(S_\gamma(V))$.
Put $S_\alpha(U)=S^V_\alpha(U)$, where $V$ is a unique element of $\calP_\gamma$ containing $U$.

      If $\alpha$ is a limit ordinal and $V\in \calP_\alpha$, then $V=\bigcap\{U\in\calP_\beta\colon \beta<\alpha\mbox{ and }V\subseteq U\},$
      so put
      $$S_\alpha(V)=\bigcup\{S_\beta(U)\colon \beta<\alpha\mbox{ and }V\subseteq U\in\calP_\beta\}.$$
      By Lemma 2.5.9 \cite{eng}, the function $S\colon\varprojlim\mathbb{P}\to\varprojlim\mathbb{Q}$, given by the formula $S((x_\alpha))=(S_\alpha(x_\alpha))$, is an embedding such that $$S|_{\mathbf{\Sigma}_{\mathbb{P}}}\colon\mathbf{\Sigma}_{\mathbb{P}}\to \mathbf{\Sigma}_{\mathbb{Q}}=\mathbf{\Sigma}.$$
      The proof is completed, since  $Y$ has to be homeomorphic to $\mathbf{\Sigma}_{\mathbb{P}}$.
      Indeed, if  $f_\alpha\colon Y\to \calP_\alpha$ are functions such that $x\in f_\alpha(x)$, then the function $f\colon Y\to \mathbf{\Sigma}_\mathbb{P}$, given by the formula $x\mapsto f(x)=(f_\alpha(x))$, is a homeomorphism.
      \end{proof}

\begin{cor}\label{thm:17}
Any $P$-space of cardinality and weight $\omega_1$ can be embedded into the space $\mathbf{\Sigma}$.
\end{cor}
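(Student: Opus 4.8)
The plan is to observe that this corollary is an immediate consequence of the three preceding results, Lemma \ref{lem:16}, Lemma \ref{lem:18b}, and Theorem \ref{lem:18a}, once one checks that the cardinality hypothesis of Theorem \ref{lem:18a} is automatically met. First I would fix a $P$-space $X$ of cardinality and weight $\omega_1$. Since $X$ has weight $\omega_1$, Lemma \ref{lem:16} furnishes a $P$-matrix $\frak{M}_X=\{\calP_\alpha\colon\alpha<\omega_1\}$.

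Next I would verify the cardinality condition required by Theorem \ref{lem:18a}. Each $\calP_\alpha$ is a partition of $X$, hence a family of pairwise disjoint nonempty clopen sets; distinct members of $\calP_\alpha$ therefore contain distinct points, so $|\calP_\alpha|\leq|X|=\omega_1\leq\frak{c}$. Thus every partition in the $P$-matrix has cardinality at most $\frak{c}$, exactly as the theorem demands. Then, since $X$ has both cardinality and weight $\omega_1$, Lemma \ref{lem:18b} supplies a labeling $E\colon\bigcup\frak{M}_X\to X$.

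Finally I would apply Theorem \ref{lem:18a} with $Y=X$, the $P$-matrix $\frak{M}_X$, and the labeling $E$ just obtained; this produces an embedding of $X$ into $\mathbf{\Sigma}$, which is precisely the assertion. In this sense there is no genuine obstacle left: all the substantive work is already contained in Lemma \ref{lem:16} (existence of a $P$-matrix from the weight assumption), in Lemma \ref{lem:18b} (extraction of a labeling from the cardinality assumption), and above all in the back-and-forth construction of the injections $S_\alpha$ in the proof of Theorem \ref{lem:18a}. The only point specific to this corollary is the elementary bound $|\calP_\alpha|\leq\frak{c}$, which is forced by $|X|=\omega_1$, so the proof is a direct assembly of the established machinery rather than a new argument.
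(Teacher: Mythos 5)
Your proof is correct and follows exactly the paper's own route: the paper likewise deduces the corollary by combining Lemma \ref{lem:16} (existence of a $P$-matrix), Lemma \ref{lem:18b} (existence of a labeling), and Theorem \ref{lem:18a}. The only difference is that you make explicit the bound $|\calP_\alpha|\leq|X|=\omega_1\leq\frak{c}$, which the paper leaves implicit in the phrase ``as it is required in Theorem \ref{lem:18a}''.
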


\begin{proof}
By Lemmas \ref{lem:16} and \ref{lem:18b}, any $P$-space of cardinality and weight $\omega_1$ has a $P$-matrix and a labeling as it is required in Theorem \ref{lem:18a}.
\end{proof}

If $X$ and $Y$ are topological spaces, then
  $X\to (Y)^1_2$ means that $Y$ can be embedded into one of  $A$ or $B$  for any subspaces $A$ and $B$ such that   $X=A\cup B$.
If $Z\subseteq (2^{\omega_1})_\delta$ is a dense subset, then any $P$-space of cardinality and weight $\omega_1$ can be embedded into $Z$, see A. Dow  \cite{dow}. 
Thus, Theorem \ref{thm:4} and Corollary \ref{thm:17} provide another proof of Dow's result.
Also, Theorem \ref{thm:4} implies $(2^{\omega_1})_\delta\to (\mathbf{\Sigma})^1_2$, which gives an example concerning Question 6.2 stated in \cite{com}.

\section{On Lindel\"of and nowhere Lindel\"of $P$-spaces}\label{sec:8}
Recall  that a  space is \textit{Lindel\"of} if  its  every open cover has a countable subcover.
We say that a topological space is  \textit{nowhere Lindel\"of}, whenever it does not contain a non-empty open subset with the Lindel\"of property.
Assume  that $\lambda$ is an infinite cardinal number and a $P$-matrix $\{\calP_\alpha\colon\alpha<\omega_1\}$ satisfies conditions (1)--(4).
We shall add another condition.

 (5-$\lambda$). Each $\calP_\alpha$ is of cardinality $\lambda$ and if $\beta<\alpha$, then
 any $V\in\mathcal{P}_\beta$ contains $\lambda$ many elements of $\mathcal{P}_\alpha$.
 
\noindent We are particularly interested in $\lambda\in\{\omega,\omega_1,\frak{c}\}$.
If $X$ is a $P$-space with a $P$-matrix $\{\calP_\alpha\colon\alpha<\omega_1\}$ which satisfies condition (5-$\lambda$), then we have an inverse system $\{\calP_\alpha,r^\alpha_\beta,\omega_1\}$ defined analogously as $\{Q_\alpha,\pi^\alpha_\beta,\omega_1\}$ just before Theorem \ref{thm:16}. 
If each $\calP_\alpha$ is equipped with the discrete topology and
$r^\alpha_\beta\colon\calP_\alpha\to\calP_\beta$, where $r^\alpha_\beta(U)\in\calP_\beta$ is a unique element containing $U$, then we get the inverse limit $$\varprojlim\{\calP_\alpha,r^\alpha_\beta,\omega_1\},$$
which contains a homeomorphic copy of $X$ as a dense subset.
Thus, $\varprojlim\{\calP_\alpha,r^\alpha_\beta,\omega_1\}$ is  a crowded $P$-space of weight and density equal to $$\max\{\lambda, \omega_1\}=|\bigcup\{\calP_\alpha\colon\alpha<\omega_1\}|.$$

\begin{pro}
If a   Lindel\"of (nowhere Lindel\"of) $P$-space $X$ is of cardinality and weight $\omega_1$, then $X$ has a $P$-matrix $\{\mathcal{P}_\beta\colon \beta<\omega_1\}$ such that if $\beta<\alpha$, then  any $V\in\mathcal{P}_\beta$ contains countably (respectively $\omega_1$)  many elements of $\mathcal{P}_\alpha$.
\end{pro}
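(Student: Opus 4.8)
The statement splits into two parallel cases, and my plan is to unify them through a single observation: for a clopen $V\subseteq X$ and $\beta<\alpha$, the family $\{U\in\calP_\alpha\colon U\subseteq V\}$ is a partition of $V$ into nonempty clopen sets (it refines $\calP_\beta$ and covers $V$), and its cardinality records exactly whether $V$ is Lindel\"of. In the \emph{Lindel\"of case} the desired $P$-matrix is then obtained essentially for free: by Lemma \ref{lem:16} the space $X$ carries a $P$-matrix $\{\calP_\alpha\colon\alpha<\omega_1\}$, and for $\beta<\alpha$ and $V\in\calP_\beta$ the set $V$ is clopen, hence a closed subspace of the Lindel\"of space $X$, hence itself Lindel\"of. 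Since a partition is its own unique subcover, Lindel\"ofness forces $\{U\in\calP_\alpha\colon U\subseteq V\}$ to be countable. Thus every member of $\calP_\beta$ contains only countably many members of $\calP_\alpha$, and no further construction is needed.

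For the \emph{nowhere Lindel\"of case} I would first isolate a splitting lemma: every nonempty clopen $V\subseteq X$ admits a partition into exactly $\omega_1$ clopen pieces. Indeed $V$ is a nonempty open subset of a nowhere Lindel\"of space, so it is not Lindel\"of and therefore not second countable; being a subspace of $X$ its weight is at most $\omega_1$, hence exactly $\omega_1$, and $V$ is a $P$-space. Fix an open cover $\calU$ of $V$ with no countable subcover. By the argument of Section \ref{sec:6}, $\calU$ has a refinement $\calV$ which is a partition of $V$. If $\calV$ were countable, selecting for each $W\in\calV$ a member of $\calU$ containing $W$ would produce a countable subcover of $\calU$, a contradiction; so $\calV$ is uncountable. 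On the other hand any family of pairwise disjoint nonempty open sets injects into a fixed base by choosing one basic subset of each, whence $|\calV|\leq\omega_1$. Therefore $|\calV|=\omega_1$.

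With the splitting lemma in hand I would rerun the recursion of Lemma \ref{lem:16}, inserting one extra refinement at each successor stage. Put $\calP_0=\{X\}$; at a limit $\alpha$ let $\calP_\alpha$ be the family of nonempty intersections of maximal chains in $\bigcup_{\beta<\alpha}\calP_\beta$, exactly as before; at a successor $\alpha=\gamma+1$ first refine as in Lemma \ref{lem:16}, so as to capture the $\alpha$-th basic clopen set together with the intersections coming from earlier levels, and then replace each resulting piece $W$ by a partition of $W$ into $\omega_1$ clopen pieces supplied by the splitting lemma (legitimate, since every nonempty clopen subset of a nowhere Lindel\"of space is again non-Lindel\"of). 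Conditions (1)--(4) survive because we only refine further, and condition (3) persists since the prescribed base elements stay unions of members of the appropriate $\calP_\alpha$. By construction each $V\in\calP_\gamma$ is split into $\omega_1$ pieces already in $\calP_{\gamma+1}$; for any $\alpha>\gamma$ the partition $\calP_\alpha$ refines $\calP_{\gamma+1}$ and all its pieces are nonempty, so $V$ contains at least $\omega_1$ members of $\calP_\alpha$, while the cellularity bound supplies at most $\omega_1$. Hence $V$ contains exactly $\omega_1$ members of $\calP_\alpha$.

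The main obstacle is the nowhere Lindel\"of construction, and inside it the interplay between the $\omega_1$-splitting at successors and condition (4) at limits. Since the levels are indexed below $\omega_1$, the maximal chains are countable, so their intersections are again clopen in a $P$-space; the delicate point is the simultaneous bookkeeping that secures a base (condition (3)), the exact-intersection property at limits (condition (4)), and uniform $\omega_1$-splitting, together with the verification that the partition property (condition (1)) is maintained so that the $\omega_1$-splitting of each $\calP_\gamma$-piece genuinely propagates through all later limit levels. The splitting lemma itself, resting only on non-Lindel\"ofness plus the weight bound, I expect to be routine.
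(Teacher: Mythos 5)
Your proposal is correct and takes essentially the same route as the paper: in the Lindel\"of case the $P$-matrix of Lemma \ref{lem:16} is used as is (Lindel\"ofness of clopen pieces forcing countability of the induced partitions), and in the nowhere Lindel\"of case the construction of Lemma \ref{lem:16} is rerun with an extra $\omega_1$-splitting of each piece at successor stages, limits unchanged. The paper leaves the splitting lemma and the propagation to all later levels implicit; you spell these out, but the underlying argument is the same.
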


\begin{proof}
If $X$ is a Lindel\"of space, then a $P$-matrix constructed as in the proof of Lemma \ref{lem:16} is suitable.
But, if  $X$ is a nowhere Lindel\"of space, it suffices to modify the construction of a $P$-matrix $\{\calP_\alpha\colon\alpha<\omega_1\}$ from the proof of Lemma \ref{lem:16}, defining a new $P$-matrix $\{\calP^*_\alpha\colon\alpha<\omega_1\}$.
Namely, each partition $\calP^*_{\alpha+1}$ is  such that any $V\in\calP^*_\alpha$ contains  $\omega_1$ many elements of $\calP^*_{\alpha+1}$.
But, if $\alpha$ is a limit ordinal, then $\calP^*_\alpha$ is defined analogously as $\calP_\alpha$.
\end{proof}

Let $X$ be a $P$-space with a $P$-matrix  $\frak{M}_X=\{\mathcal{P}_\alpha\colon\alpha<\omega_1\}$ which satisfies condition (5-$\lambda$).
Suppose that there exists a labeling $E\colon \bigcup\frak{M}_X\to X$ which satisfy the following condition.

$(*)$. \textit{ If $\alpha<\omega_1$ and $\mathcal{L}$ is a chain contained in $\bigcup\{\mathcal{P}_\beta\colon \beta<\alpha\}$ and $\bigcap\mathcal{L}\neq\emptyset$, then there exists $V\in\mathcal{L}$ such that $E(V)\in\bigcap\mathcal{L}$.}

\noindent In this case we say that $X$ has a $\lambda$\textit{-thin labeling}.
By the definition, any $P$-space with a $\lambda$-thin labeling is  a crowded space since every base set contains infinitely  many pairwise disjoint subsets.
Also, if $X$ has a $\lambda$-thin labeling $E\colon\frak{M}_X\to X$, then
$$E[\calP_\alpha]=E[\bigcup\{\calP_\beta\colon \beta<\alpha\}],$$
 for each limit ordinal $\alpha$.
Applying  Lemma \ref{lem:I5}, one can check that  an arbitrary $P$-space with a $\lambda$-thin labeling has to be of first category.

\begin{lem}\label{lem:19}
  If a crowded $P$-space $X$ is of weight $\omega_1$, then there exists $Z\subseteq X$ such that $Z$ has an $\omega$-thin labeling.
\end{lem}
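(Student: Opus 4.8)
The plan is to realise $Z$ as the set of \emph{labels} of a carefully pruned subtree of clopen subsets of $X$, built so that each label propagates along a single distinguished branch and every branch ``jumps'' only discretely. First I would fix, using Proposition \ref{pro:3}, a base $\{B_\xi\colon\xi<\omega_1\}$ of $X$ consisting of clopen sets, and construct by recursion on $\alpha<\omega_1$ a tree whose level $\alpha$ is a countable family $\calR_\alpha$ of nonempty clopen subsets of $X$, together with a point $e(V)\in V$ attached to each $V\in\calR_\alpha$. At a successor step $\alpha=\gamma+1$, each $V\in\calR_\gamma$ with label $z=e(V)$ is split into countably many pairwise disjoint nonempty clopen pieces refining $\{B_\gamma,X\setminus B_\gamma\}$; this is possible because $X$ is crowded, so every nonempty clopen set has a limit point and therefore splits into infinitely many clopen pieces. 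Exactly one piece, the \emph{continuing child}, is chosen to contain $z$ and inherits the label $z$; each of the remaining pieces receives a \emph{fresh} label, namely a point of $X$ lying in it. Thus a label is carried downwards as long as the branch stays inside the piece bearing it, and a branch \emph{jumps} precisely when it enters a non-continuing child.

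At a limit ordinal $\delta$ I would let $\calR_\delta$ consist of the intersections $\bigcap\{V_\beta\colon\beta<\delta\}$ taken over those branches $(V_\beta)$ of $\bigcup_{\beta<\delta}\calR_\beta$ along which the labels are \emph{eventually constant} below $\delta$, the common eventual value $w$ serving as the label of the intersection. The decisive point, which I expect to be the main obstacle, is to check that this keeps $\calR_\delta$ countable while still covering every label. A branch whose labels stabilise to $w$ below $\delta$ is, past the stabilisation level, nothing but the continuing branch of $w$, hence is \emph{determined} by $w$; moreover $w$ is introduced as a label at some level strictly below $\delta$, and only countably many points are labelled before level $\delta$, so $\calR_\delta$ is countable. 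Conversely, a branch that jumps cofinally below $\delta$ carries labels $z_0,z_1,\dots$ none of which survives into $\bigcap_\beta V_\beta$, since each is abandoned at the next jump; its intersection therefore meets the set of labels trivially and is rightly discarded. This is exactly the mechanism underlying Lemma \ref{lem:I5}.

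Once the whole tree is built I would put $Z=\{e(V)\colon V\in\bigcup_{\alpha<\omega_1}\calR_\alpha\}$, and define $\calP_\alpha=\{V\cap Z\colon V\in\calR_\alpha\}$ together with $E(V\cap Z)=e(V)$. It then remains to verify the required properties. That $\frak{M}_Z=\{\calP_\alpha\colon\alpha<\omega_1\}$ is a $P$-matrix satisfying (5-$\omega$) is built into the construction: refinement and the limit formula (4) hold by design, countability of each $\calP_\alpha$ is the content of the previous paragraph, and each node splits into $\omega$ children, so (5-$\omega$) holds. The bookkeeping against $\{B_\xi\}$ forces $\bigcup\frak{M}_Z$ to refine the trace base $\{B_\xi\cap Z\}$, whence it is a base for $Z$, giving (3); since every nonempty node splits into infinitely many siblings carrying fresh labels, $Z$ has no isolated points. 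Finally, $E$ is a labeling because $e(V)\in V$ and a label coincides with $e(V)$ throughout the continuing branch on which it is carried, which yields the required implication $x\le y$ and $y\in E(x)\Rightarrow E(x)=E(y)$.

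It remains to verify the thinness condition $(*)$. Given a chain $\calL\subseteq\bigcup_{\beta<\alpha}\calP_\beta$ with $\bigcap\calL\neq\emptyset$, if $\calL$ has a least element the conclusion is immediate, since the label of that element lies in it. Otherwise $\calL$ is cofinal below some limit level $\delta$, and a point $z\in\bigcap\calL\cap Z$ forces every member of $\calL$ to lie on the home branch of $z$, which is one of the surviving branches and hence has labels eventually constant below $\delta$, say with stabilised value $w$. As $w$ belongs to every member of $\calL$ past the stabilisation level, we have $w\in\bigcap\calL$, and choosing such a member $V\in\calL$ gives $E(V)=w\in\bigcap\calL$, as required. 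Therefore $Z$ carries an $\omega$-thin labeling.
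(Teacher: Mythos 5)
Your proof is correct and is essentially the construction the paper itself uses: a transfinite recursion building countable clopen levels in which each label point survives inside a distinguished piece of the next level while all other pieces receive fresh labels, with $Z$ taken to be the set of labels, the traces of the levels serving as the $P$-matrix, and condition $(*)$ verified by the same home-branch/stabilisation argument. The only difference is scaffolding: the paper prunes an ambient $P$-matrix of a dense subset of cardinality $\omega_1$ supplied by Lemmas \ref{lem:16} and \ref{lem:18b}, whereas you build the tree directly from a clopen base of $X$ and make the limit-level survival and countability arguments explicit --- a self-contained rendering of the same idea.
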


\begin{proof}
Applying Lemma \ref{lem:16},  choose a dense subset $Z\subseteq X$ of  cardinality $\omega_1$ with a $P$-matrix $\frak{M}_Z=\{\mathcal{Q}_\alpha\colon \alpha<\omega_1\}$.
Let $E\colon\bigcup\frak{M}_Z\to Z$ be a labeling, which exists by Lemma \ref{lem:18b}.
 Without loss of generality, because $Z$ is crowded, assume that if $\beta<\alpha$ and $V\in\mathcal{Q}_\beta$,  then $V$ contains infinitely many elements of $\mathcal{Q}_\alpha$.
  Choose a family  $\mathcal{P}_0\subseteq \mathcal{Q}_0$ such that  $|\mathcal{P}_0|= \omega$.
  For each $V\in\mathcal{P}_0$,  choose a point $E(V)\in V$.
  Suppose families $\{\mathcal{P}_\beta\colon \beta<\alpha\}$ and points
  $$\{E(V)\colon V\in\bigcup\{\mathcal{P}_\beta\colon\beta<\alpha\}\}$$
  are defined.
  If $V\in\bigcup\{\mathcal{P}_\beta\colon\beta<\alpha\}$, then put $$\mathcal{L}_V=\{W\in\bigcup\{\mathcal{P}_\beta\colon \beta<\alpha\}\colon E(V)\in W\}.$$
  Then choose a family $\mathcal{R}_V\subseteq\mathcal{Q}_{\alpha+1}$ consisting of $\omega$ many pairwise disjoint clopen subsets of $\bigcap\mathcal{L}_V\subseteq V$ such that $E(V)\in\bigcup\mathcal{R}_V$.
  For each $W\in\mathcal{R}_V$ such that $E(V)\notin W$,  choose a point $E(W)\in W$.
  Let
  $$\mathcal{P}_{\alpha+1}=\bigcup\{\mathcal{R}_V\colon \mathcal{R}_V\subseteq\mathcal{Q}_{\alpha+1}\mbox{ and }V\in\bigcup \{\mathcal{P}_\beta\colon\beta<\alpha\}\}.$$
  Let $Z$ be the set of all points $E(V)$, which are defined above.
  Any base set of $Z$ contains infinitely many pairwise disjoint subsets, hence  $Z$ is crowded.
  Putting $\mathcal{P}^*_\alpha=\{V\cap Z\colon V\in\mathcal{P}_\alpha\}$, define the function $E^*\colon \bigcup\{\mathcal{P}^*_\alpha\colon\alpha<\omega_1\}\to Z$ by the formula $V\cap Z\mapsto E(V)$.
  The map $E^*$  is a labeling.
  \end{proof}

  \begin{thm}\label{thm:25}
  If a $P$-space $Y$  has an $\omega$-thin labeling, then $Y$ is a Lindel\"of space.
\end{thm}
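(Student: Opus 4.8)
The plan is to reduce the Lindel\"of property to a statement about partitions and then to exploit the thin labeling through a reflection argument. First I would recall from Section~\ref{sec:6} that, since $Y$ carries a $P$-matrix $\frak{M}_Y=\{\calP_\alpha\colon\alpha<\omega_1\}$, every open cover $\calU$ of $Y$ has a refinement $\calV\subseteq\bigcup\frak{M}_Y$ which is a partition into clopen sets, namely the maximal elements of $\{V\in\bigcup\frak{M}_Y\colon V\subseteq U\text{ for some }U\in\calU\}$. Because a partition admits no proper subcover, producing a countable subcover of $\calU$ is equivalent to showing that such a $\calV$ is countable: once $\calV$ is countable, choosing for each $V\in\calV$ some $U_V\in\calU$ with $V\subseteq U_V$ yields the desired countable subcover. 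So the whole task becomes showing that \emph{every partition $\calV\subseteq\bigcup\frak{M}_Y$ is countable}.

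Next I would fix notation and two structural facts. For $q\in Y$ and $\gamma<\omega_1$ let $V^\gamma_q$ be the unique member of $\calP_\gamma$ containing $q$, let $V_q$ be the unique member of $\calV$ containing $q$, and let $\ell(V)$ denote the level of $V$. Directly from the labeling property, if $p=E(W)$ with $W\in\calP_\beta$ and $\beta<\alpha$, then the member of $\calP_\alpha$ through $p$ is again labeled $p$; hence the sets $E[\calP_\gamma]$ increase with $\gamma$, and by condition (5-$\omega$) each is countable. The second fact is condition (4): for a limit ordinal $\delta$ one has $V^\delta_q=\bigcap\{V^\gamma_q\colon\gamma<\delta\}$. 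I would then choose a countable limit ordinal $\delta$ closed under the cell assignment, by the closing-off recursion $\delta_{n+1}=\sup\{\ell(V_p)\colon p\in E[\calP_{\delta_n}]\}+1$ and $\delta=\sup_n\delta_n$; each step is a countable supremum, so $\delta<\omega_1$, and the point of the choice (using monotonicity of $E[\calP_\gamma]$) is that every label $p$ occurring below $\delta$ has $\ell(V_p)<\delta$. Equivalently one may take a countable $M\prec H(\theta)$ containing $Y,\calU,\calV,\frak{M}_Y,E$ and set $\delta=M\cap\omega_1$.

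The heart of the proof is to show that no member of $\calV$ sits at level $\geq\delta$. Suppose $V\in\calV$ with $\ell(V)\geq\delta$, and put $q=E(V)$, so that $V_q=V$. Applying condition $(*)$ to the chain $\calL=\{V^\gamma_q\colon\gamma<\delta\}$, whose intersection is $V^\delta_q\ni q$ by condition (4), furnishes $\gamma_0<\delta$ with $p^*:=E(V^{\gamma_0}_q)\in V^\delta_q$. Then $p^*\in E[\calP_{\gamma_0}]$ occurs below $\delta$, so $\ell(V_{p^*})<\delta$; since $p^*\in V_{p^*}$ this forces $V^\delta_{p^*}\subseteq V_{p^*}$, and $V^\delta_{p^*}=V^\delta_q$ because $p^*\in V^\delta_q\in\calP_\delta$. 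Hence $q\in V^\delta_q\subseteq V_{p^*}$, so $V_{p^*}=V_q=V$; but $\ell(V_{p^*})<\delta\leq\ell(V)$, a contradiction. Therefore every $V\in\calV$ has $\ell(V)<\delta$, whence $\calV\subseteq\bigcup\{\calP_\gamma\colon\gamma<\delta\}$ is countable and $Y$ is Lindel\"of. The main obstacle is exactly this reflection step: making condition $(*)$, together with condition (4), produce inside the level-$\delta$ cell through $q$ a label $p^*$ that has already occurred below $\delta$, so that its $\calV$-cell is forced to be large enough to contain $q$ and hence, by disjointness of $\calV$, to coincide with $V$.
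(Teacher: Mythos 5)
Your proof is correct and takes essentially the same route as the paper's: both run a closing-off recursion of length $\omega$ (kept countable by condition (5-$\omega$)) and then, at the resulting countable limit level, apply condition $(*)$ to the chain of matrix cells through a point, using condition (4) and the nesting of cells to pull the covering data down below the limit — the paper packages the conclusion as ``$\calP_\alpha$ refines $\mathcal{U}$'' where you package it as ``$\calV$ lies below level $\delta$, hence is countable.'' One microscopic repair: your recursion $\delta_{n+1}=\sup\{\ell(V_p)\colon p\in E[\calP_{\delta_n}]\}+1$ need not yield an increasing sequence, while condition (4) requires $\delta$ to be a limit ordinal, so set $\delta_{n+1}=\max\bigl(\delta_n+1,\ \sup\{\ell(V_p)\colon p\in E[\calP_{\delta_n}]\}+1\bigr)$ (the paper does this by demanding $\alpha_{n+1}>\alpha_n$, and your elementary-submodel variant avoids the issue automatically).
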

\begin{proof}
  Let $\{\calP_\alpha\colon\alpha<\omega_1\}$ be a $P$-matrix for $Y$ and
  $$E\colon \bigcup\{\calP_\alpha\colon\alpha<\omega_1\}\to Y$$
  be an $\omega$-thin labeling.
Fix an open cover $\mathcal{U}$ of $Y$.
We can assume that $\mathcal{U}$ is a partition of $Y$,  since $Y$ is of cardinality and weight $\omega_1$, see Lemma \ref{thm:6}.
Let $\alpha_0<\omega_1$ be an ordinal number such that if $V\in\mathcal{P}_{0}$, then there exist $\beta\leq\alpha_0$, $W_V\in\mathcal{P}_{\beta}$ and $U_V$ such that
$$E(V)\in W_V\subseteq U_V\in\mathcal{U}.$$
  Assume that an ordinal $\alpha_{n}$ is defined such that if $V\in\bigcup\{\calP_{\beta}\colon \beta\leq\alpha_{n-1}\}$, then there exist $\gamma\leq \alpha_{n}$, $W_V\in\mathcal{P}_{\gamma}$ and $U_V$ such that
$$E(W_V)=E(V)\in W_V\subseteq U_V\in\mathcal{U}.$$
Let $\alpha_{n+1}>\alpha_{n}$ be a countable ordinal such that if $V\in\bigcup\{\mathcal{P}_\beta\colon \beta\leq\alpha_{n}\}$, then there $W_V\in \bigcup\{\mathcal{P}_{\beta}\colon \beta\leq \alpha_{n+1}\}$ and $U_V$, fulfilling
$$E(V)\in W_V\subseteq U_V\in\mathcal{U}.$$
Since $E$ is an $\omega$-thin labeling, using conditions (5-$\omega$) and ($*$), we check that if $\alpha=\sup\{\alpha_n\colon n>0\}$, then the partition $\calP_\alpha$  refines $\mathcal{U}$.
\end{proof}

Translating the above proof to the language of category theory, we get that $P$-matrix for $Y$ is a Fra\"{\i}ss\'e   $\omega_1$-sequence in the category of all open covers of $Y$, with refining pairs of covers as morphisms.
We recommend the paper \cite{kub} for details about Fra\"{\i}ss\'e sequences.

If $\mathbb{P}=\{\calP_\alpha,\pi^\beta_\alpha,\omega_1\}$ is an inverse system, where $\calP_\alpha$ are countable discrete spaces, then $\varprojlim\mathbb{P}$ is not necessary a Lindel\"of space.
This has been observed in \cite{jw}, compare \cite[Lemma 2]{kun}.
Let us present a sketch of proof.
Let $\mathbb{T}_A$ be an Aronszajn tree.
Let $\{\ell_\alpha\colon\alpha<\omega_1\}$ be a sequence of branches of $\mathbb{T}_A$ with different height.
Then, each $\ell_\alpha$ is extended by a copy of a tree determined by a $P$-space which has an $\omega$-thin  labeling.
The family of all just extended branches gives a tree  of height $\omega_1$ with all levels countable, i.e. we get the desired  inverse limit which is not Lindel\"of.

\begin{pro}\label{pro:19}
  If a crowded $P$-space $X$ is of weight $\omega_1$, then there exists $Y\subseteq X$ such that $Y$ is nowhere Lindel\"of.
\end{pro}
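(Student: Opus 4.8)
The plan is to produce the required subspace $Y$ as the exact counterpart, with $\omega_1$ in place of $\omega$, of the space built in Lemma~\ref{lem:19}: instead of an $\omega$-thin labeling, $Y$ will carry a $P$-matrix $\{\calP_\alpha\colon\alpha<\omega_1\}$ in which every $V\in\calP_\beta$ is partitioned by $\calP_{\beta+1}$ into $\omega_1$ many non-empty clopen pieces. Once such a $Y$ is available, nowhere Lindel\"ofness is forced by the branching alone, with no reference to any labeling; this is the point at which the situation diverges from Theorem~\ref{thm:25}, where $\omega$-fold branching gave Lindel\"ofness.

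To build $Y$ I would run the constructions in the proofs of Lemmas~\ref{lem:16} and~\ref{lem:19} simultaneously, with the single change that at each successor stage $\beta+1$ every clopen set $V$ produced at level $\beta$ is split into $\omega_1$ (rather than $\omega$) many pairwise disjoint clopen subsets. This is legitimate because $X$ is crowded, so each such $V$ is a clopen set with a limit point and hence, by the Proposition of Section~\ref{sec:2} on clopen sets with a limit point, contains uncountably many pairwise disjoint clopen subsets; interleaving this splitting with a fixed clopen base of $X$, as in Lemma~\ref{lem:16}, keeps $\bigcup_\alpha\calP_\alpha$ a base. As in Lemma~\ref{lem:19} I attach to every node a distinguished point, let it persist into the child that contains it, and place a fresh point in each remaining child; $Y$ is the set of all these points and $\calP_\alpha=\{W\cap Y\colon W\in\calP^{X}_\alpha\}$, where $\calP^{X}_\alpha$ is the family of clopen sets produced at level $\alpha$. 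The persisting points are exactly what keeps each branch non-empty through the countable limit levels, where $\bigcap_{\beta<\alpha}V_\beta$ is a countable intersection of clopen sets, hence clopen, and contains the persisting point; so conditions (1)--(4) for a $P$-matrix hold verbatim as in Lemma~\ref{lem:19}, and each $V\in\calP_\beta$ contains $\omega_1$ many elements of $\calP_{\beta+1}$.

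Granting such a $Y$, let $G\subseteq Y$ be any non-empty open set. Since $\bigcup_\alpha\calP_\alpha$ is a base there are $\beta<\omega_1$ and $V\in\calP_\beta$ with $V\subseteq G$, and because every point of $V$ lies on a branch continuing into some child of $V$, the family $\{W\in\calP_{\beta+1}\colon W\subseteq V\}$ is a partition of the clopen set $V$ into $\omega_1$ many non-empty clopen pieces. This partition is an open cover of $V$ with no countable subcover, so $V$ is not Lindel\"of; being clopen in $Y$ it is a closed subspace of $G$, and a closed subspace of a Lindel\"of space is Lindel\"of, so $G$ cannot be Lindel\"of either. As $G$ was arbitrary, $Y$ is nowhere Lindel\"of. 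The whole difficulty is thus concentrated in the construction of $Y$, namely in checking that $\omega_1$-fold branching remains compatible with the limit clause~(4) and with the base requirement~(3) of a $P$-matrix; both are secured, exactly as in Lemma~\ref{lem:19}, by the distinguished persisting points, so that replacing ``countably many'' by ``uncountably many children'' at successor steps is the only genuinely new ingredient.
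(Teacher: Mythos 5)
Your proposal is correct and follows essentially the same route as the paper: starting from a $P$-matrix of the crowded space $X$, recursively split each cell into $\omega_1$ pairwise disjoint clopen pieces (available by the Section~\ref{sec:2} proposition on clopen sets with a limit point), select one point per piece, and let $Y$ be the set of selected points, so that every basic clopen subset of $Y$ admits a partition into $\omega_1$ nonempty clopen sets and is therefore not Lindel\"of. The only differences are presentational: you handle limit stages via persisting labeled points (as in Lemma~\ref{lem:19} and Corollary~\ref{cor:19}) and spell out the final nowhere-Lindel\"of verification, whereas the paper works with chains having nonempty intersection and leaves that verification implicit.
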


\begin{proof}
  Let $X$ be a crowded $P$-space with a $P$-matrix  $\{\mathcal{Q}_\alpha\colon \alpha<\omega_1\}$.
  If $V\in\mathcal{Q}_0$, then choose  a family  $\mathcal{P}_V$  consisting of $\omega_1$ pairwise disjoint open sets such  that $\bigcup\mathcal{P}_V\subseteq V$.
  Let $\mathcal{P}_0=\bigcup\{\mathcal{P}_V\colon V\in\mathcal{Q}_0\}$ and let $Y_0\subseteq X$ be such that $Y_0\cap V$ is a singleton for each $V\in\mathcal{P}_0$.
  Assume that families $\{\mathcal{P}_\beta\colon\beta<\alpha\}$ are defined.
  If $V\in\mathcal{Q}_\alpha$ and there exists $W_\beta\in\mathcal{P}_\beta$, for each $ \beta<\alpha$, such that
  $$V\cap\bigcap\{W_\beta\colon\beta<\alpha\}\neq\emptyset,$$
  then choose  a family  $\mathcal{P}_V$ consisting of $\omega_1$ pairwise disjoint open sets such  that $\bigcup\mathcal{P}_V\subseteq V\cap\bigcap\{W_\beta\colon\beta<\alpha\}$, otherwise $\mathcal{P}_V=\emptyset$.

  Let
  $$\mathcal{P}_\alpha=\bigcup\{\mathcal{P}_V\colon V\in\mathcal{Q}_\alpha\}$$ and let $Y_\alpha\subseteq X$  be such that $\bigcup\{Y_\beta\colon\beta<\alpha\}\subseteq Y_\alpha$ and  $Y_\alpha\cap V$ is a singleton for each $V\in\mathcal{P}_\alpha$.
  Thus $\{\mathcal{P}_\alpha\colon\alpha<\omega_1\}$ is a $P$-matrix for the subset $Y=\bigcup\{Y_\alpha\colon\alpha<\omega_1\}\subseteq X$.
  Put $E(V)=Y_\alpha\cap V$, whenever $V\in\mathcal{P}_\alpha$.
  Thus,  $E\colon \bigcup\{\mathcal{P}_\alpha\colon \alpha<\omega_1\}\to Y$ is a desired labeling.
\end{proof}

\begin{cor}\label{cor:19}
  If a crowded $P$-space $X$ is of weight $\omega_1$, then there exists $Z\subseteq X$ such that $Z$ has an $\omega_1$-thin labeling.
  \end{cor}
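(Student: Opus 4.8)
The plan is to run, almost verbatim, the construction carried out in the proof of Lemma~\ref{lem:19}, replacing every choice of \emph{countably} many pairwise disjoint clopen sets by a choice of $\omega_1$ many such sets. Concretely, I would first fix, by Lemmas~\ref{lem:16} and~\ref{lem:18b}, a dense subset of $X$ of cardinality $\omega_1$ together with a $P$-matrix $\{\mathcal{Q}_\alpha\colon\alpha<\omega_1\}$ and an auxiliary labeling $E$ on $\bigcup\{\mathcal{Q}_\alpha\colon\alpha<\omega_1\}$. Then I would recursively carve out a refined $P$-matrix $\{\mathcal{P}_\alpha\colon\alpha<\omega_1\}$: starting from a family $\mathcal{P}_0$ of $\omega_1$ pairwise disjoint clopen sets, at a successor step $\alpha+1$ and for each already constructed $V$ I would set $\mathcal{L}_V=\{W\in\bigcup\{\mathcal{P}_\beta\colon\beta<\alpha\}\colon E(V)\in W\}$ and choose a family $\mathcal{R}_V\subseteq\mathcal{Q}_{\alpha+1}$ consisting of $\omega_1$ many pairwise disjoint clopen subsets of $\bigcap\mathcal{L}_V$ with $E(V)\in\bigcup\mathcal{R}_V$, assigning to the unique member of $\mathcal{R}_V$ containing $E(V)$ the label $E(V)$ and choosing a fresh label in each of the remaining members. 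At limit stages the matrix is pinned down by condition~(4) of a $P$-matrix, exactly as in Lemma~\ref{lem:19}.

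The one point that genuinely uses the hypotheses, and which I regard as the heart of the argument, is the availability of $\omega_1$ many pairwise disjoint clopen subsets of $\bigcap\mathcal{L}_V$. Since $\mathcal{L}_V$ is a chain indexed by countably many levels, $\bigcap\mathcal{L}_V$ is a countable intersection of clopen sets, hence clopen in the $P$-space $X$, and it is nonempty because it contains $E(V)$. As $X$ is crowded, every point of this set is one of its limit points, so by the Proposition asserting that a clopen set with a limit point contains uncountably many pairwise disjoint clopen subsets, a family $\mathcal{R}_V$ of the required size can be extracted. This is precisely where the argument departs from Lemma~\ref{lem:19}, in which only countably many disjoint sets were needed, and I expect it to be the only nonroutine step.

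Finally I would check the two defining features of an $\omega_1$-thin labeling. Condition~(5-$\omega_1$) holds by construction: each $\mathcal{P}_\alpha$ has cardinality $\omega_1$, being a union of $\omega_1$ many families $\mathcal{R}_V$ of size $\omega_1$, and every $V\in\mathcal{P}_\beta$ splits into $\omega_1$ many members of $\mathcal{P}_\alpha$. Condition $(*)$ is inherited exactly as in Lemma~\ref{lem:19}: at each successor step the label $E(V)$ is passed to the unique child of $\bigcap\mathcal{L}_V$ containing it, so labels propagate downward along the branch that carries them, and consequently for any chain $\mathcal{L}\subseteq\bigcup\{\mathcal{P}_\beta\colon\beta<\alpha\}$ with $\bigcap\mathcal{L}\neq\emptyset$ some $V\in\mathcal{L}$ satisfies $E(V)\in\bigcap\mathcal{L}$. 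Passing to $Z=\{E(V)\colon V\in\bigcup\{\mathcal{P}_\alpha\colon\alpha<\omega_1\}\}$ and to the traces $\mathcal{P}^*_\alpha=\{V\cap Z\colon V\in\mathcal{P}_\alpha\}$, the induced map $E^*\colon V\cap Z\mapsto E(V)$ is the desired $\omega_1$-thin labeling; moreover $Z$ is crowded, since every basic set of $Z$ contains $\omega_1$ many pairwise disjoint subsets.
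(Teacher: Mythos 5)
There is a genuine gap, and it sits exactly at the step you single out as ``the heart of the argument.'' You require $\mathcal{R}_V\subseteq\mathcal{Q}_{\alpha+1}$ to consist of $\omega_1$ many pairwise disjoint members of the partition $\mathcal{Q}_{\alpha+1}$ lying inside $\bigcap\mathcal{L}_V$, but you justify this by the Proposition stating that a clopen set with a limit point contains uncountably many pairwise disjoint clopen subsets. That Proposition produces \emph{arbitrary} clopen sets, not members of the fixed partition $\mathcal{Q}_{\alpha+1}$, and the two are not interchangeable here. Indeed, the auxiliary dense subspace of cardinality $\omega_1$ produced by Lemmas \ref{lem:16} and \ref{lem:18b} may well be Lindel\"of --- the paper itself shows that crowded Lindel\"of $P$-spaces of cardinality and weight $\omega_1$ exist (combine Lemma \ref{lem:19} with Theorem \ref{thm:25}, or take Kunen's example) --- and in a Lindel\"of space \emph{every} partition into nonempty disjoint open sets is countable. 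For such a dense subspace each $\mathcal{Q}_{\alpha+1}$ is countable, so no subfamily of cardinality $\omega_1$ exists at all. This is why the ``without loss of generality'' used in Lemma \ref{lem:19} for $\omega$ (crowdedness forces every member to split into infinitely many members of later partitions) does not lift to $\omega_1$: splitting into $\omega_1$ many members of a later \emph{partition} is equivalent to nowhere Lindel\"ofness, which is strictly stronger than crowdedness. Note also that the uncountable disjoint family supplied by crowdedness can never cover a clopen subset of a Lindel\"of space, so it cannot be turned into the required partition of the already-fixed dense subspace after the fact.

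The paper closes this gap with Proposition \ref{pro:19}, which your proposal bypasses entirely: one first constructs a \emph{nowhere Lindel\"of} subspace $Y\subseteq X$ together with a $P$-matrix in which every member contains $\omega_1$ many members of each later partition. The crucial point of that construction is that the $\omega_1$ many disjoint clopen subsets of $X$ given by crowdedness are chosen \emph{simultaneously} with the points of $Y$ (one point of $Y$ inside each chosen set, anchored to a $P$-matrix of $X$ to keep the base property), so that their traces really do form partitions of $Y$ with $\omega_1$-fold splitting. Only after this does the recursion of Lemma \ref{lem:19} run verbatim with $\omega$ replaced by $\omega_1$, because now the families $\mathcal{R}_V\subseteq\mathcal{Q}_{\alpha+1}$ of size $\omega_1$ genuinely exist. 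So your overall recursion scheme matches the paper's second step, but the proposal is missing the construction that makes that step legitimate.
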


  \begin{proof}
Using Proposition \ref{pro:19}, take $Y\subseteq X$ such that $Y$ is a nowhere Lindel\"of subspace.  
  Let  $\{\mathcal{Q}_\alpha\colon \alpha<\omega_1\}$ be a $P$-matrix for $Y$ such that if $\alpha<\beta$ and $V\in\mathcal{Q}_\alpha$,  then $V$ contains $\omega_1$ many elements of $\mathcal{Q}_\beta$.
  The rest of the proof is a modification of the reasoning of the proof of Lemma \ref{lem:19}.
Namely, the family $\mathcal{P}_0\subseteq \mathcal{Q}_0$ is chosen to be of cardinality $\omega_1$.
  If $V\in\mathcal{P}_0$, then select a point $E(V)\in V$.
Assume that  families $\{\mathcal{P}_\beta\colon \beta<\alpha\}$ and points
  $$\{E(V)\colon V\in\bigcup\{\mathcal{P}_\beta\colon\beta<\alpha\}\}$$
  are defined.
  If $V\in\bigcup\{\mathcal{P}_\beta\colon\beta<\alpha\}$, then we repeat the definition of  $$\mathcal{L}_V=\{W\in\bigcup\{\mathcal{P}_\beta\colon \beta<\alpha\}\colon E(V)\in W\}.$$
  Then a family  $\mathcal{R}_V\subseteq\mathcal{Q}_{\alpha+1}$ is chosen such that it consists of $\omega_1$ many pairwise disjoint clopen subsets of $\bigcap\mathcal{L}_V\subseteq V$ and  $E(V)\in\bigcup\mathcal{R}_V$.
  Let
  $$\mathcal{P}_\alpha=\bigcup\{\mathcal{R}_V\colon \mathcal{R}_V\subseteq\mathcal{Q}_{\alpha+1}\mbox{ and }V\in\bigcup \{\mathcal{P}_\beta\colon\beta<\alpha\}\}.$$
  If $W\in\mathcal{R}_V$ and $E(V)\notin W$, then choose a point $E(W)\in W$.
  Let $Z$ be the set of all points $E(V)$, which are defined above.
  By the definition, any base set of  $Z\subseteq X$ has a partition consisting of  $\omega_1$  clopen subsets, then $Z$ is nowhere Lindel\"of.

  Since $E\colon \bigcup\{\mathcal{P}_\alpha\colon\alpha<\omega_1\}\to Z$ is a surjection, then $Z$ has an $\omega_1$-thin labeling.
\end{proof}

Now, we can prove counterparts of Theorem \ref{thm:14}.

\begin{thm}\label{thm:23}
If $\lambda$ is an infinite cardinal number, then 
any two $P$-spaces which have $\lambda$-thin labelings  are homeomorphic.
\end{thm}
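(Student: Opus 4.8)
The plan is to attach to each of the two $P$-spaces the enriched inverse system produced from its $\lambda$-thin labeling, exactly as in the proof of Theorem \ref{lem:18a}, and then to build a level-by-level isomorphism between these two systems; Lemma \ref{lem:I} will turn such an isomorphism into the required homeomorphism. So fix $P$-spaces $X$ and $Y$ with $\lambda$-thin labelings, with $P$-matrices $\{\calP_\alpha\colon\alpha<\omega_1\}$ and $\{\calR_\alpha\colon\alpha<\omega_1\}$ and labelings $E\colon\bigcup\{\calP_\alpha\colon\alpha<\omega_1\}\to X$ and $E'\colon\bigcup\{\calR_\alpha\colon\alpha<\omega_1\}\to Y$. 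As in Theorem \ref{lem:18a}, these data give enriched inverse systems $\mathbb{P}=\{\calP_\alpha,r^\alpha_\beta,\eta^\alpha_\beta,\omega_1\}$ and $\mathbb{R}=\{\calR_\alpha,\rho^\alpha_\beta,e^\alpha_\beta,\omega_1\}$, where $\eta^\alpha_\beta(U)$ (resp. $e^\alpha_\beta(U)$) is the unique element of the $\alpha$-th partition containing $E(U)$ (resp. $E'(U)$), and moreover $X$ is homeomorphic to $\mathbf{\Sigma}_{\mathbb{P}}$ and $Y$ to $\mathbf{\Sigma}_{\mathbb{R}}$. It therefore suffices to produce bijections $s_\alpha\colon\calP_\alpha\to\calR_\alpha$ with $\rho^\alpha_\beta\circ s_\alpha=s_\beta\circ r^\alpha_\beta$ and $s_\alpha\circ\eta^\alpha_\beta=e^\alpha_\beta\circ s_\beta$ for all $\beta<\alpha$; then the bijective case of Lemma \ref{lem:I} yields that $\mathbf{\Sigma}_{\mathbb{P}}$ is homeomorphic to $\mathbf{\Sigma}_{\mathbb{R}}$, whence $X$ is homeomorphic to $Y$.

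I would define the $s_\alpha$ by transfinite recursion. Let $s_0$ be any bijection $\calP_0\to\calR_0$, available since both partitions have cardinality $\lambda$ by (5-$\lambda$). At a successor $\alpha=\beta+1$ and for each $V\in\calP_\beta$, the children of $V$ in $\calP_\alpha$ form a set of cardinality $\lambda$, exactly one of which, namely $\eta^\alpha_\beta(V)$, contains $E(V)$; the same holds for $s_\beta(V)$ and $e^\alpha_\beta(s_\beta(V))$ on the $Y$-side. I set $s_\alpha(\eta^\alpha_\beta(V))=e^\alpha_\beta(s_\beta(V))$ and, using that $\lambda$ is infinite so each node still has $\lambda$ remaining children, extend by any bijection between the non-distinguished children of $V$ and those of $s_\beta(V)$. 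Collecting over all $V$ gives a bijection $s_\alpha$ commuting with the bonding maps and, on distinguished children, with the label-injections; since $\eta^\alpha_\gamma=\eta^\alpha_\beta\circ\eta^\beta_\gamma$ and likewise for $e$, commutation with the injections propagates to all $\beta<\alpha$. At a limit $\alpha$, condition (4) forces the value: each $V\in\calP_\alpha$ equals $\bigcap\{r^\alpha_\beta(V)\colon\beta<\alpha\}$, so I put $s_\alpha(V)=\bigcap\{s_\beta(r^\alpha_\beta(V))\colon\beta<\alpha\}$.

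The main obstacle is verifying that this limit-stage $s_\alpha$ is a well-defined bijection, and this is exactly where $\lambda$-thinness enters. The relevant fact, read off from condition $(*)$ together with the labeling axiom, is that a maximal chain $\mathcal{L}$ below level $\alpha$ has $\bigcap\mathcal{L}\neq\emptyset$ if and only if $\mathcal{L}$ eventually follows the label-injections, i.e. from some node on each member is the distinguished child of its predecessor: condition $(*)$ gives the forward direction (some $E(V)\in\bigcap\mathcal{L}$ forces the labels, hence the branch, to be constant from $V$ on), and the converse is immediate since the stabilized label then lies in every member of $\mathcal{L}$. By (4) and the fact that $\calP_\alpha$ is a partition, the elements of $\calP_\alpha$ correspond bijectively to the maximal chains below $\alpha$ with nonempty intersection. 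Because the already-constructed $s_\beta$ commute with the bonding maps and the injections, they carry eventually-$\eta$-following chains in $\mathbb{P}$ to eventually-$e$-following chains in $\mathbb{R}$ and back; hence they match the nonempty-intersection chains of the two systems bijectively. This shows both that $s_\alpha(V)$ is a genuine element of $\calR_\alpha$ and that every element of $\calR_\alpha$ is attained, while injectivity is clear because distinct elements of $\calP_\alpha$ have distinct predecessor chains. Thus $s_\alpha$ is a bijection with the required commutations, and the recursion goes through.

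With all $s_\alpha$ in hand, $\{s_\alpha\colon\alpha<\omega_1\}$ is an isomorphism of the enriched inverse systems $\mathbb{P}$ and $\mathbb{R}$, so Lemma \ref{lem:I} gives that $\mathbf{\Sigma}_{\mathbb{P}}$ is homeomorphic to $\mathbf{\Sigma}_{\mathbb{R}}$. Combined with the homeomorphisms $X\cong\mathbf{\Sigma}_{\mathbb{P}}$ and $Y\cong\mathbf{\Sigma}_{\mathbb{R}}$ furnished by Theorem \ref{lem:18a}, this proves that $X$ and $Y$ are homeomorphic. I expect the only delicate point to be the limit-stage bijectivity above; the base and successor stages are routine cardinality bookkeeping secured by (5-$\lambda$).
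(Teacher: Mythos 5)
Your proposal is correct and takes essentially the same route as the paper's own proof: both form the enriched inverse systems determined by the labelings, build level bijections $s_\alpha$ (at successors by matching the distinguished children $\iota^\alpha_\gamma(V)\mapsto\eta^\alpha_\gamma(s_\gamma(V))$ and extending arbitrarily using (5-$\lambda$), at limits by a value forced by condition $(*)$), and finish with the bijective case of Lemma \ref{lem:I}. The only cosmetic differences are that the paper defines the limit-stage value directly, by using $(*)$ to find a lower-level node with the same label and pushing it through the injection (your intersection-of-image-chains argument produces the same map, just with the well-definedness spelled out), and that the paper identifies $X$ and $Y$ with the full inverse limits $\varprojlim\mathbb{Q}$ and $\varprojlim\mathbb{R}$ via $(*)$ and Lemma \ref{lem:I5}, whereas you identify them with $\mathbf{\Sigma}_{\mathbb{P}}$ and $\mathbf{\Sigma}_{\mathbb{R}}$ as in Theorem \ref{lem:18a} --- under $(*)$ these subspaces coincide with the full limits, so both identifications are valid.
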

\begin{proof}
Assume that $X$ and $Y$ have $\lambda$-thin labelings.
Let $\frak{M}_X=\{{Q}_\alpha\colon\alpha<\omega_1\}$ be a $P$-matrix of $X$ with a $\lambda$-thin labeling $E\colon\bigcup\frak{M}_X\to X$ and let $\frak{M}_Y=\{{R}_\alpha\colon\alpha<\omega_1\}$ be a $P$-matrix of $Y$ with a $\lambda$-thin labeling $F\colon\bigcup\frak{M}_Y\to Y$.
Thus, we have two enriched systems $\mathbb{Q}=\{Q_\alpha,q^\alpha_\beta,\iota^\alpha_\beta,\omega_1\}$ and $\mathbb{R}=\{R_\alpha,r^\alpha_\beta,\eta^\alpha_\beta,\omega_1\}$, where $\iota^\alpha_\beta$ and $\eta^\alpha_\beta$ are determined by $\lambda$-thin labelings $E$ and $F$, respectively, but $q^\alpha_\beta$ and $r^\alpha_\beta$ are determined by the inclusion. 
We shall define a bijection $s_\alpha\colon Q_\alpha\to R_\alpha$ such that the following diagram
$$\xymatrix{
	Q_\beta \ar@<1ex>@{--{>}}[rr]^{\iota^\alpha_\beta}\ar@{-->}[d]_{s_\beta} && Q_\alpha\ar[ll]^{q^\alpha_\beta} \ar@{->}@{--{>}}[d]^{s_\alpha} \\
	R_\beta\ar@<1ex>@{--{>}}[rr]^{\eta^\alpha_\beta}       && R_\alpha\ar[ll]^{r^\alpha_\beta}
      }$$
is commutative, whenever $\beta<\alpha<\omega_1$.

If $\alpha=\gamma+1$, then fix $V\in Q_\gamma$.
      Choose an injection
      $$s^V_\alpha\colon \{U\in Q_{\alpha}\colon U\subseteq V\}\to \{W\in R_{\alpha}\colon W\subseteq s_\gamma(V)\}$$
      such that $s^V_\alpha(\iota^\alpha_\gamma(V))=\eta^\alpha_\gamma(s_\gamma(V))$.
Put $s_\alpha(U)=s^V_\alpha(U)$, where $V$ is a unique element of $Q_\gamma$ containing $U$.

      If $\alpha$ is a limit ordinal and $V\in Q_\alpha$, then, by condition $(*)$, it follows that there exists $\gamma<\alpha$ and $U\in Q_\gamma$ such that $E(V)=E(U)$, hence we define $s_\alpha(V)=s_\gamma(U)$.

      By Lemma \ref{lem:I}, the inverse limits $\varprojlim\mathbb{Q}$ and $\varprojlim\mathbb{R}$ are homeomorphic.
      Condition $(*)$ and Lemma \ref{lem:I5} imply that $X=\varprojlim\mathbb{Q}$ and $Y=\varprojlim\mathbb{R}$.
\end{proof}

Because of condition $(*)$, if a $P$-space $X$ has a $\lambda$-thin labeling, then $X$ is  an inverse limit,  as it focused on at the end of the above proof.

\begin{cor}\label{cor:25}
If a $P$-space $Y$ has a $\lambda$-thin labeling and a subset $Z\subseteq Y$ is non-empty and clopen, then  $Z\subseteq Y$ also has  a $\lambda$-thin labeling.
\end{cor}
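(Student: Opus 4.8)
The plan is to build a $P$-matrix for $Z$ together with a $\lambda$-thin labeling by restricting the structure that $Y$ already carries. Write $\frak{M}_Y=\{\calP_\alpha\colon\alpha<\omega_1\}$ for the given $P$-matrix and $E\colon\bigcup\frak{M}_Y\to Y$ for its $\lambda$-thin labeling. First I would isolate the family $\mathcal{M}$ of all maximal (with respect to inclusion) members of $\bigcup\frak{M}_Y$ that are contained in $Z$. Because $\bigcup\frak{M}_Y$ is a base and $Z$ is open, every point of $Z$ lies in some basic set contained in $Z$; since the basic sets through a fixed point form a chain, each point of $Z$ lies in a largest such set, so $\mathcal{M}$ is a partition of $Z$ into clopen pieces. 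Denote by $\ell(V)$ the level of $V\in\mathcal{M}$, i.e.\ the unique $\alpha$ with $V\in\calP_\alpha$.

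The first genuine point is to bound $|\mathcal{M}|\le\lambda$. When $\lambda\ge\omega_1$ this is immediate, since $|\bigcup\frak{M}_Y|=\omega_1\cdot\lambda=\lambda$ by condition (5-$\lambda$), and $\mathcal{M}\subseteq\bigcup\frak{M}_Y$. When $\lambda=\omega$ the space $Y$ is Lindel\"of by Theorem \ref{thm:25}, hence so is its clopen (thus closed) subspace $Z$; a partition of a Lindel\"of space into non-empty open sets is countable, so $|\mathcal{M}|\le\omega=\lambda$. This case split, with the Lindel\"of input for $\lambda=\omega$, is one of the two essential steps.

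Next I reassemble the subtrees hanging below the members of $\mathcal{M}$ into a single matrix of height $\omega_1$, shifting levels so that branching starts uniformly. For $V\in\mathcal{M}$ and $\gamma<\omega_1$ the piece of the cone below $V$ at shifted level $\gamma$ is $\{W\in\calP_{\ell(V)+1+\gamma}\colon W\subseteq V\}$, and I set
$$\calQ_\gamma=\bigcup\{\,\{W\in\calP_{\ell(V)+1+\gamma}\colon W\subseteq V\}\colon V\in\mathcal{M}\,\}.$$
Each $\calQ_\gamma$ is a partition of $Z$, the family $\{\calQ_\gamma\colon\gamma<\omega_1\}$ refines downward, and its union is a base for $Z$. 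The limit condition (4) transfers cone by cone, because $\ell(V)+1+\gamma$ is again a limit ordinal when $\gamma$ is, the original levels $\ell(V)+1+\beta$ ($\beta<\gamma$) are cofinal in it, and distinct cones do not interact. For (5-$\lambda$): by condition (5-$\lambda$) for $\frak{M}_Y$ each $V$ contributes exactly $\lambda$ sets at every shifted level, so $|\calQ_\gamma|=|\mathcal{M}|\cdot\lambda=\lambda$ using the bound above (and $\mathcal{M}\neq\emptyset$ since $Z$ is non-empty), while branching by $\lambda$ inside each cone is inherited.

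Finally I restrict the labeling, putting $E_Z(W)=E(W)$ for $W\in\bigcup\{\calQ_\gamma\colon\gamma<\omega_1\}$; here $E(W)\in W\subseteq Z$, so $E_Z$ takes values in $Z$, and the implication defining a labeling is inherited because the tree order on $\bigcup\{\calQ_\gamma\}$ is the restriction of the one on $\bigcup\frak{M}_Y$. Surjectivity onto $Z$ is the one remaining spot needing care: given $z\in Z$ choose $W_0$ with $E(W_0)=z$, compare $W_0$ with the cone $V\ni z$, and use the labeling implication to replace $W_0$, if necessary, by the unique child of $V$ at level $\ell(V)+1$ containing $z$, which still maps to $z$. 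Condition $(*)$ for $E_Z$ follows from $(*)$ for $E$: any chain in $\bigcup_{\beta<\gamma}\calQ_\beta$ is bounded in the original levels, so the original $(*)$ supplies the required member. Thus $E_Z$ is a $\lambda$-thin labeling of $Z$. I expect the main obstacles to be precisely the cardinality bound on $\mathcal{M}$ (where the Lindel\"of case $\lambda=\omega$ is indispensable) and the check that the level shift preserves (4) and $(*)$; the remaining verifications are routine transfers.
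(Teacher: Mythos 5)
Your proof is correct, and its skeleton is the same as the paper's: both arguments manufacture the new $P$-matrix for $Z$ out of the members of $\bigcup\frak{M}_Y$ that are contained in $Z$, and both take the restriction of $E$ as the new labeling. The execution differs in two ways worth recording. First, the paper re-levels by transfinite recursion: it starts from a maximal pairwise disjoint subfamily $R_0$ of $R=\{V\in\bigcup\frak{M}_Y\colon V\subseteq Z\}$ asserted to have cardinality $\lambda$, and at each later stage attaches, below every maximal chain already built, the maximal members of $R$ inside that chain's intersection, ending with ``check that\dots''; you instead fix once and for all the partition $\mathcal{M}$ of $Z$ into maximal basic sets and shift levels uniformly inside each cone ($\gamma\mapsto\ell(V)+1+\gamma$), which turns the verification of (1)--(4), (5-$\lambda$) and $(*)$ into a cone-by-cone transfer. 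Second, and more substantively, you actually prove the cardinality of the new levels is $\lambda$: the bound $|\mathcal{M}|\le\lambda$ is immediate for $\lambda\ge\omega_1$, while for $\lambda=\omega$ you invoke Theorem \ref{thm:25} to make $Y$, hence the clopen (so closed) subspace $Z$, Lindel\"of, which forces the partition $\mathcal{M}$ to be countable. The paper's sketch never explains why its maximal disjoint family of cardinality $\lambda$ exists, and Lindel\"ofness is exactly the missing ingredient in the $\omega$-thin case, so your case split is a genuine completion of the argument rather than a detour; the same goes for your explicit surjectivity check (when the witness $W_0$ for $z$ sits at level at most $\ell(V)$, pass to the child of the cone at level $\ell(V)+1$ via the labeling implication; otherwise $W_0$ already belongs to the new tree), which is part of the paper's unperformed ``check that'' step.
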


\begin{proof}
Assume that  $\{P_\alpha\colon \alpha<\omega_1\}$ is a $P$-matrix and $E\colon \bigcup\{P_\alpha\colon\alpha<\omega_1\}\to Y$ is a $\lambda$-thin labeling.
Consider the family
$$R=\{V\subseteq Z\colon V\in P_\alpha\mbox{ and }0<\alpha<\omega_1\}.$$
Let $R_0\subseteq R$ be a maximal family of cardinality $\lambda$,  consisting of pairwise disjoint sets.
Assume that families $\{R_\beta\subseteq R\colon \beta<\alpha\}$ are defined.
Fix a maximal chain $\mathcal{L}\subseteq\bigcup\{R_\beta\colon \beta<\alpha\}$.
Let $L\subseteq \{V\subseteq \bigcap\mathcal{L}\colon V\in R\}$ be the set of all maximal subsets with respect to inclusion and let $R_{\alpha}$ be the union of all just defined families $L$.
Check that $E|_{\bigcup\{R_\alpha\colon\alpha<\omega_1\}}$ is a $\lambda$-thin labeling for $Z$.
\end{proof}

Thus we have the following facts about dimensional types of crowded $P$-spaces of cardinality and weight $\omega_1$.

\begin{thm}\label{thm:26}
Assume that  a $P$-space $X$ of cardinality and weight $\omega_1$ is crowded.
  If a $P$-space $Y$ has an $\omega$-thin labeling and a $P$-space $Z$ has an  $\omega_1$-thin labeling, then  $Y=_h Z\subseteq_h X.$
\end{thm}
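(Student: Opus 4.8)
The plan is to reduce the entire statement to Theorem~\ref{thm:23} together with the two existence results Lemma~\ref{lem:19} and Corollary~\ref{cor:19}. Recall that Theorem~\ref{thm:23} asserts that any two $P$-spaces carrying $\lambda$-thin labelings are homeomorphic, \emph{for one and the same} $\lambda$. The obstacle is precisely that $Y$ carries an $\omega$-thin labeling while $Z$ carries an $\omega_1$-thin labeling, so Theorem~\ref{thm:23} cannot be applied to the pair $(Y,Z)$ directly. I would circumvent this by interposing suitable subspaces, exploiting the fact that inside any crowded $P$-space of weight $\omega_1$ one finds both an $\omega$-thin labeled subspace, by Lemma~\ref{lem:19}, and an $\omega_1$-thin labeled subspace, by Corollary~\ref{cor:19}.

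First I would record two standing facts about $Y$ and $Z$. By the remark following condition $(*)$, every $P$-space with a $\lambda$-thin labeling is crowded, so both $Y$ and $Z$ are crowded. Moreover such a space carries a $P$-matrix $\{\calP_\alpha\colon\alpha<\omega_1\}$ whose union is a base of cardinality $\max\{\lambda,\omega_1\}=\omega_1$; since a crowded $T_1$ $P$-space cannot have a countable base (the countable intersection of the basic neighbourhoods of a point would be an open singleton, forcing an isolated point), the weight of each of $Y$ and $Z$ is exactly $\omega_1$. Thus $Y$ and $Z$ both satisfy the hypotheses ``crowded $P$-space of weight $\omega_1$'' needed to invoke the auxiliary results.

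The inclusion $Z\subseteq_h X$ is immediate: $X$ is by hypothesis a crowded $P$-space of weight $\omega_1$, so Corollary~\ref{cor:19} produces $Z'\subseteq X$ carrying an $\omega_1$-thin labeling, and Theorem~\ref{thm:23} with $\lambda=\omega_1$ yields $Z\cong Z'$, whence $Z\subseteq_h X$. For $Y=_h Z$ I would prove the two inclusions separately. Applying Lemma~\ref{lem:19} to $Z$ gives a subspace of $Z$ with an $\omega$-thin labeling, homeomorphic to $Y$ by Theorem~\ref{thm:23} with $\lambda=\omega$; this yields $Y\subseteq_h Z$. Symmetrically, applying Corollary~\ref{cor:19} to $Y$ gives a subspace of $Y$ with an $\omega_1$-thin labeling, homeomorphic to $Z$ by Theorem~\ref{thm:23} with $\lambda=\omega_1$; this yields $Z\subseteq_h Y$. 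Combining the two inclusions gives $Y=_h Z$, and together with the previous step we conclude $Y=_h Z\subseteq_h X$.

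The only point demanding genuine care is the verification of hypotheses before each appeal: one must check that each of $X$, $Y$, $Z$ is a crowded $P$-space of weight $\omega_1$ before invoking Lemma~\ref{lem:19} or Corollary~\ref{cor:19}, and that every use of Theorem~\ref{thm:23} is made with matching $\lambda$. For $X$ this is part of the hypothesis, and for $Y$ and $Z$ it follows from the two standing facts above; no additional set-theoretic assumption is required.
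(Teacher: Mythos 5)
Your proposal is correct and follows essentially the same route as the paper's own proof: both reduce the statement to Lemma~\ref{lem:19}, Corollary~\ref{cor:19} and Theorem~\ref{thm:23}. The only difference is that you spell out what the paper leaves implicit, namely that $Y$ and $Z$ are themselves crowded $P$-spaces of weight $\omega_1$ (so that the two auxiliary results can be applied to them as well as to $X$), and that each appeal to Theorem~\ref{thm:23} is made with one fixed value of $\lambda$.
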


\begin{proof}
  By Lemma \ref{lem:19} and Corollary \ref{cor:19}, any crowded $P$-space $X$ of cardinality and weight $\omega_1$ contains copies of a space with  an $\omega$-thin labeling and a space with an $\omega_1$-thin labeling.
Theorem \ref{thm:23} implies that $Y=_h Z\subseteq_h X.$
\end{proof}

\begin{cor}
If a $P$-space $X$ has an $\omega$-thin labeling (or an $\omega_1$-thin labeling), then $X$ has the smallest dimensional type in the class of all crowded $P$-spaces of weight $\omega_1$.
\end{cor}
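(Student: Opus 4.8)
The plan is to unwind ``smallest dimensional type'' into the statement that $X\subseteq_hW$ for \emph{every} crowded $P$-space $W$ of weight $\omega_1$; combined with the observation that $X$ itself belongs to this class, that is exactly what must be shown. That $X$ lies in the class is immediate from the definitions: a $P$-space carrying a $\lambda$-thin labeling is crowded, and since condition (5-$\lambda$) forces each $\calP_\alpha$ to have cardinality $\lambda\in\{\omega,\omega_1\}$, the union $\bigcup\frak{M}_X$ is a base of cardinality $\max\{\lambda,\omega_1\}=\omega_1$, so $X$ has weight $\omega_1$.

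First I would fix an arbitrary crowded $P$-space $W$ of weight $\omega_1$ and split according to the hypothesis. Suppose $X$ has an $\omega$-thin labeling. Applying Lemma \ref{lem:19} to $W$ produces a subspace $Z\subseteq W$ equipped with an $\omega$-thin labeling. By Theorem \ref{thm:23}, taken with $\lambda=\omega$, any two $P$-spaces carrying $\omega$-thin labelings are homeomorphic, so $X=_hZ$; as $Z\subseteq_hW$ trivially, we get $X\subseteq_hW$. The case of an $\omega_1$-thin labeling is verbatim the same, replacing Lemma \ref{lem:19} by Corollary \ref{cor:19} to obtain a subspace $Z\subseteq W$ with an $\omega_1$-thin labeling and invoking Theorem \ref{thm:23} with $\lambda=\omega_1$. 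Since $W$ was arbitrary, $X\subseteq_hW$ holds for every crowded $P$-space $W$ of weight $\omega_1$, which is the asserted minimality.

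I do not expect a genuine obstacle: the statement is a clean assembly of Lemma \ref{lem:19}, Corollary \ref{cor:19} and Theorem \ref{thm:23}, with Theorem \ref{thm:26} already foreshadowing that the $\omega$-thin and $\omega_1$-thin types coincide. The one point deserving a moment's care is that Lemma \ref{lem:19} and Corollary \ref{cor:19} require only that $W$ be crowded of weight $\omega_1$, imposing no bound on $|W|$. This is precisely why the argument cannot be routed through Theorem \ref{thm:26}, whose target must have cardinality \emph{and} weight $\omega_1$, and must instead pass directly through the extraction of a thinly labeled subspace of $W$.
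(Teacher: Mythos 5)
Your proposal is correct and follows essentially the same route as the paper: extract a subspace with an $\omega$-thin (resp.\ $\omega_1$-thin) labeling from an arbitrary crowded $P$-space of weight $\omega_1$ via Lemma \ref{lem:19} (resp.\ Corollary \ref{cor:19}), then conclude with the uniqueness-up-to-homeomorphism given by Theorem \ref{thm:23}. The only cosmetic difference is that the paper first passes explicitly to a dense subset of cardinality $\omega_1$ (a step already built into the proofs of Lemma \ref{lem:19} and Corollary \ref{cor:19}), while you invoke those results directly and additionally check that $X$ itself belongs to the class.
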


\begin{proof}
Any crowded $P$-space $X$ of weight $\omega_1$ contains a dense subset  $Z\subseteq X$ of cardinality $\omega_1$.
Since  $Z$ is a $T_1$-space, it is  crowded and hence $Z$ contains subspaces which have an $\omega$-thin labeling and an $\omega_1$-thin labeling.
\end{proof}

\section{Remarks on rigid $P$-spaces}\label{sec:9}


K. Kunen showed that there exists a rigid Lindel\"of $P$-space of cardinality and weight $\omega_1$, see  \cite[2.1. Theorem]{kun}. Let us add a few remarks about rigid Lindel\"of $P$-spaces.

\begin{pro}
There exist at least $\frak c$ many (rigid) $P$-spaces of cardinality and weight $\omega_1$ such that any two of them are not homeomorphic.
\end{pro}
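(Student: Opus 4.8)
The plan is to bootstrap from a single example. By K. Kunen \cite{kun} there is a rigid Lindel\"of $P$-space $K$ of cardinality and weight $\omega_1$, and everything will be extracted from its clopen algebra via two elementary rigidity facts. First, \emph{every clopen subset of a rigid space is rigid}: if $U\subseteq K$ is clopen and $h\colon U\to U$ is a homeomorphism, then extending $h$ by $\id$ on $K\setminus U$ yields a self-homeomorphism of $K$ (continuous since $U$ is clopen), so it is $\id$, whence $h=\id$. Second, \emph{two disjoint nonempty clopen subsets of a rigid space are never homeomorphic}: a homeomorphism $\phi\colon U\to V$ with $U\cap V=\emptyset$ gives the nontrivial self-homeomorphism of $K$ equal to $\phi$ on $U$, to $\phi^{-1}$ on $V$, and to $\id$ elsewhere, contradicting rigidity. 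In particular $K$ has at most one isolated point (two isolated points could be transposed).

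Next I would manufacture countably many pairwise non-homeomorphic rigid blocks. Deleting the at-most-one isolated point of $K$ produces a crowded rigid clopen subspace $K'$, still of cardinality and weight $\omega_1$ and still Lindel\"of. Since $K'$ is crowded, the Proposition of Section~\ref{sec:2} asserting that a clopen set with a limit point contains uncountably many pairwise disjoint clopen subsets lets me fix a clopen $D\subsetneq K'$ together with pairwise disjoint nonempty clopen sets $\{C_n\colon 1\le n<\omega\}\subseteq D$; I set $C_0=K'\setminus D$, arranging (by taking $D$ small) that $C_0$ has cardinality and weight $\omega_1$. Each $C_n$ is clopen in $K$, hence rigid by the first fact, and the $C_n$ are pairwise non-homeomorphic by the second fact, being pairwise disjoint.

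Now for each $S\subseteq\{1,2,\dots\}$ put
$$X_S=C_0\cup\bigcup\{C_n\colon n\in S\},$$
which is a clopen subspace of $K$ (a countable union of clopen sets is clopen in a $P$-space). By the first fact $X_S$ is a rigid $P$-space, and since $C_0\subseteq X_S\subseteq K$ it has cardinality and weight exactly $\omega_1$. As $S$ ranges over $\mathcal P(\{1,2,\dots\})$ this produces $2^\omega=\mathfrak{c}$ candidates. The remaining, and genuinely hard, step is to show that $S\neq S'$ forces $X_S$ and $X_{S'}$ to be non-homeomorphic. \textbf{The main obstacle} is exactly here: a $0$-dimensional space has no canonical finest clopen partition, so a homeomorphism $X_S\to X_{S'}$ need not carry the blocks $C_n$ to blocks, and the swap argument is unavailable because $X_S,X_{S'}$ overlap in $C_0$ and in their common blocks.

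To overcome this I would equip each block with a \emph{homeomorphism-invariant local signature} realized through the Lindel\"of/nowhere-Lindel\"of dichotomy developed in Section~\ref{sec:8}. Concretely, using the freedom in splitting $K'$ one fixes inside each $C_n$ a distinguished point $z_n$ whose chain of clopen neighbourhoods alternates between Lindel\"of pieces (built via $\omega$-thin labelings, Theorem~\ref{thm:25}) and nowhere-Lindel\"of pieces (built via $\omega_1$-thin labelings, Corollary~\ref{cor:19}) according to a pattern coding $n$; since the local Lindel\"of behaviour along the neighbourhood chain of a point is preserved by homeomorphisms, the signature of $z_n$ is a topological invariant. One then checks that a point of signature $n$ occurs in $X_S$ precisely when $n\in S$, so that $S$ is recovered from the homeomorphism type of $X_S$; this makes the $X_S$ pairwise non-homeomorphic and completes the construction of $\mathfrak{c}$ many rigid $P$-spaces of cardinality and weight $\omega_1$, no two of which are homeomorphic.
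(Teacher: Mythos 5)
Your two rigidity facts are exactly right, and they are already all that is needed; the genuine flaw is that you then declare the swap argument ``unavailable'' and abandon it, when in fact your second fact finishes the proof in one line. Suppose $S\neq S'$, say (without loss of generality) $S\setminus S'\neq\emptyset$, and let $h\colon X_S\to X_{S'}$ be a homeomorphism. Put $U=X_S\setminus X_{S'}=\bigcup\{C_n\colon n\in S\setminus S'\}$, a non-empty clopen subset of $K$. Its image $V=h[U]$ is clopen in $X_{S'}$, hence clopen in $K$, and $V\cap U=\emptyset$ simply because $V\subseteq X_{S'}$ while $U\cap X_{S'}=\emptyset$. Thus $U$ and $V$ are disjoint non-empty clopen subsets of the rigid space $K$ which are homeomorphic via $h|_U$, contradicting your second fact. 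The overlap of $X_S$ and $X_{S'}$ in $C_0$ and in common blocks is irrelevant, and it is never needed that $h$ carry blocks to blocks: one only needs that the difference $X_S\setminus X_{S'}$ is clopen and misses $X_{S'}$. This one-line finish is precisely the paper's own proof (there the map equal to $h$ on $X_A\setminus X_B$, to $h^{-1}$ on $h[X_A\setminus X_B]$, and to the identity elsewhere is the forbidden non-trivial self-homeomorphism of $X$).

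The replacement you propose for this step is not only unnecessary but unworkable. Kunen's space $K$ is Lindel\"of, and every clopen (indeed closed) subspace of a Lindel\"of space is Lindel\"of, so $K$ contains no nowhere-Lindel\"of clopen pieces whatsoever; the alternating ``signature'' cannot be realized inside $K$. Moreover, Lemma \ref{lem:19}, Corollary \ref{cor:19} and Theorem \ref{thm:25} construct certain \emph{subspaces} (in general neither closed nor open) of a given crowded $P$-space; they give you no freedom to prescribe the clopen neighbourhood structure of points of $K$, which is a fixed space, not one you are building. Worse, clopen pieces carrying thin labelings are incompatible with rigidity: by Theorem \ref{thm:23} any two $P$-spaces with $\lambda$-thin labelings are homeomorphic, so two disjoint such clopen pieces would contradict your own second fact --- this is exactly the content of the paper's corollary in Section \ref{sec:9} that in a rigid $P$-space a closed subset with an $\omega$-thin or $\omega_1$-thin labeling must be nowhere dense. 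So the signature step would fail; deleting it and applying your second fact to $X_S\setminus X_{S'}$ and its $h$-image repairs the proof and recovers the paper's argument.
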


\begin{proof}
Let $X$ be a rigid Lindel\"of $P$-space.
Choose an infinite family $\{U_n\colon n<\omega\}$ of pairwise disjoint clopen subsets of $X$.
Assign each $A\subseteq\omega$ a subspace  $X_A=\bigcup\{U_n\colon n\in A\}\subseteq X$, which is a clopen subset.
If $A\neq B$, then $X_A$ and $X_B$ cannot be homeomorphic.
Indeed, if $A\setminus B\neq\emptyset$ and $h\colon X_A\to X_B$ is a homeomorphism, then $H\colon X\to X$, given by the formula
    $$H(x)=
    \begin{cases}
      h(x),&\mbox{if }x\in X_A\setminus X_B;\\
          h^{-1}(x),&\mbox{if }x\in h[X_A\setminus X_B];\\
    x,&\mbox{otherwise},\\
  \end{cases}$$
 is a non-trivial homeomorphism.
 So, $X$ is not a rigid space.
 
 Thus, if $X$ is a rigid $P$-space constructed by Kunen \cite{kun}, then spaces $\{X_A\colon A\subseteq\omega\}$  are  of cardinality and weight $\frak{c}$, whenever $A\neq\emptyset$, and also are rigid, Lindel\"of and not homeomorphic.
\end{proof}

\begin{cor}
If a rigid $P$-space $X$ is of cardinality and weight $\omega_1$, then a closed subset of $X$, which has an $\omega$-thin labeling or an $\omega_1$-thin labeling, is a nowhere dense set.
\end{cor}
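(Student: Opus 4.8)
The plan is to argue by contradiction, exploiting the rigidity of $X$ together with the swapping construction already used in the preceding proposition. Suppose $Z\subseteq X$ is a closed subset carrying a $\lambda$-thin labeling, where $\lambda\in\{\omega,\omega_1\}$, and suppose towards a contradiction that $Z$ is \emph{not} nowhere dense. Since $Z$ is closed, this means $\operatorname{int}Z\neq\emptyset$. By Proposition \ref{pro:3} the space $X$ is $0$-dimensional, so $X$ has a base of clopen sets and $\operatorname{int}Z$ therefore contains a non-empty clopen subset $U$ of $X$. As $U\subseteq Z$ is clopen in $X$, it is in particular clopen in the subspace $Z$, and hence by Corollary \ref{cor:25} the set $U$ inherits a $\lambda$-thin labeling.

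Next I would split $U$ into two homeomorphic halves. Because a $P$-space with a $\lambda$-thin labeling is crowded, $U$ has at least two points, and $0$-dimensionality lets me choose a clopen $W\subseteq U$ with $\emptyset\neq W\neq U$. Put $U_1=W$ and $U_2=U\setminus W$; these are disjoint, non-empty, and clopen in $X$. Being clopen in $U$, each $U_i$ again carries a $\lambda$-thin labeling by Corollary \ref{cor:25}, for the \emph{same} cardinal $\lambda$, so Theorem \ref{thm:23} applies and yields a homeomorphism $h\colon U_1\to U_2$.

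Finally I would manufacture a non-trivial self-homeomorphism of $X$, mirroring the formula from the previous proposition. Define $H\colon X\to X$ by
\[
H(x)=
\begin{cases}
h(x), & x\in U_1;\\
h^{-1}(x), & x\in U_2;\\
x, & \text{otherwise.}
\end{cases}
\]
Since $U_1$ and $U_2$ are disjoint clopen sets and $h$ is a homeomorphism, $H$ is a well-defined bijection, continuous on each piece of the clopen partition $\{U_1,U_2,X\setminus(U_1\cup U_2)\}$, hence a homeomorphism; and $H$ is non-trivial because it sends each point of $U_1$ into the disjoint set $U_2$. This contradicts the rigidity of $X$, so $Z$ must be nowhere dense.

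The only genuinely delicate point is the bookkeeping that guarantees $U$, $U_1$ and $U_2$ all honestly carry $\lambda$-thin labelings, with the same $\lambda$ inherited from $Z$, so that Theorem \ref{thm:23} produces disjoint homeomorphic clopen halves; once that is secured, the contradiction is just the swapping trick already established for rigid $P$-spaces. Everything else is routine $0$-dimensional manipulation of clopen sets.
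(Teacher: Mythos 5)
Your proof is correct and follows essentially the same route as the paper's: find two disjoint non-empty subsets of the closed set that are clopen in $X$, give each a $\lambda$-thin labeling via Corollary \ref{cor:25}, apply Theorem \ref{thm:23} to get a homeomorphism between them, and contradict rigidity with the swapping homeomorphism from the preceding proposition. The only cosmetic difference is that you split one clopen set $U\subseteq\operatorname{int}Z$ into two halves, while the paper directly picks two disjoint clopen pieces inside $Y$; the extra bookkeeping you supply (why the pieces carry the same $\lambda$, why $H$ is a homeomorphism) is exactly what the paper leaves implicit.
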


\begin{proof}
Suppose a closed subset $Y\subseteq X$ is  not nowhere dense.
Choose two disjoint subsets $U,V\subseteq Y$, which are  clopen in $X$.
If $Y$ has an $\omega$-thin labeling ($\omega_1$-thin labeling), then $U$ is homeomorphic to $V$, since Theorem \ref{thm:23} and Corollary \ref{cor:25}, which contradicts the rigidity of $X$.
\end{proof}

\begin{thm}
If a $P$-space $X$ of cardinality and weight $\omega_1$ is rigid and a $P$-space $Y$ has an $\omega$-thin labeling, then the relation $X\subset_h Y$ is not fulfilled.
\end{thm}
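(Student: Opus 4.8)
The plan is to argue by contradiction. Suppose $X\subset_h Y$ and fix an embedding $e\colon X\to Y$; identify $X$ with the subspace $X'=e[X]\subseteq Y$, so that $X'$ is a rigid $P$-space of cardinality and weight $\omega_1$. By Theorem \ref{thm:25} the space $Y$ is Lindel\"of, and by Corollary \ref{cor:25} together with Theorem \ref{thm:23} every non-empty clopen subset of $Y$ carries an $\omega$-thin labeling and is therefore homeomorphic to $Y$; in particular $Y$ splits into two disjoint non-empty clopen pieces, each homeomorphic to the whole of $Y$. The whole difficulty is to transport this self-similarity of $Y$ onto the copy $X'$, whose rigidity should make it impossible.

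The strategy I would follow reduces everything to the Corollary immediately preceding this theorem. It suffices to produce a single non-empty set $C$, relatively clopen in $X'$, that itself admits an $\omega$-thin labeling: pulling $C$ back through the homeomorphism $X'\cong X$ yields a non-empty clopen, hence closed, subset of $X$ carrying an $\omega$-thin labeling, and the preceding Corollary forces such a closed set to be nowhere dense, which is absurd for a non-empty clopen set. By Theorem \ref{thm:23} a $P$-space has an $\omega$-thin labeling exactly when it is homeomorphic to $Y$, so the task is equivalently to find a relatively clopen piece of $X'$ homeomorphic to $Y$; and this is the same, via the swapping construction used in the proof of the Proposition preceding that Corollary, as finding two disjoint non-empty relatively clopen subsets $A,B\subseteq X'$ with $A\cong B$. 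Indeed, extending a homeomorphism $A\to B$ by its inverse on $B$ and by the identity elsewhere would be a non-trivial self-homeomorphism of $X'\cong X$, contradicting rigidity.

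To build such a piece I would pass to the tree representation of Section \ref{sec:2}. Writing $Y=\varprojlim\{\mathcal{P}_\alpha,r^\alpha_\beta,\omega_1\}$ with each level $\mathcal{P}_\alpha$ countable and each member of $\mathcal{P}_\beta$ splitting into countably many members of $\mathcal{P}_\alpha$ (condition (5-$\omega$)), the copy $X'$ is carried by the subtree $T'=\{t\colon [t]\cap X'\neq\emptyset\}$, where $[t]=\{b\colon t\in b\}$ is the basic clopen set at the node $t$. Then $T'$ has height $\omega_1$ and countable levels, and since $X'$ has weight $\omega_1$, uncountably many nodes of $T'$ are genuine branching nodes for $X'$. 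The key step, and the main obstacle, is to run a pressing-down and $\Delta$-system argument over the $\omega_1$ levels: using that $Y$ is Lindel\"of, so that the branching of $X'$ is level by level sufficiently tame, one stabilises on a stationary set of levels the relative homeomorphism type of the traces $[t]\cap X'$. Two distinct nodes $s,t$ at which the stabilised type agrees then give disjoint relatively clopen pieces $[s]\cap X'\cong[t]\cap X'$, which is precisely the pair $A,B$ sought above. I expect this stabilisation to be the crux, as it is the point where the height-$\omega_1$, countable-level, Lindel\"of structure of $Y$ is indispensable: for countably branching trees of height $\omega$, such as subspaces of the Cantor set, rigid examples do exist and no repetition of relative types is forced, so the argument must genuinely exploit Fodor's lemma on $\omega_1$ rather than a finite-height pigeonhole.
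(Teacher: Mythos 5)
Your overall frame---assume an embedding, identify $X$ with $X'=e[X]$, and derive a contradiction with rigidity by either producing a relatively clopen piece of $X'$ carrying an $\omega$-thin labeling, or producing two disjoint homeomorphic relatively clopen pieces and applying the swap homeomorphism---is exactly the endgame of the paper's proof, and that part of your reduction is sound. The problem is the engine you propose for producing such pieces. Your key step, ``one stabilises on a stationary set of levels the relative homeomorphism type of the traces $[t]\cap X'$'' via a pressing-down and $\Delta$-system argument, is not an argument that can be carried out. Fodor's lemma applies to regressive \emph{ordinal-valued} functions on a stationary set; the assignment $t\mapsto$ (homeomorphism type of $[t]\cap X'$) takes values in a class of topological types of size possibly $2^{\omega_1}$, with no regressive structure and no coding of types by ordinals below the level of $t$. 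Since the tree $T'$ has only $\omega_1$ nodes but there is room for $\omega_1$ (indeed more) pairwise non-homeomorphic traces, no counting, pigeonhole, or pressing-down argument forces a repetition of type. Worse, rigidity of $X$ is precisely the statement that disjoint clopen pieces of $X'$ are pairwise non-homeomorphic (via the swap map), so the configuration you are trying to rule out by ``stabilisation'' is exactly the configuration the hypothesis permits a priori; Lindel\"ofness of $Y$ plays no role in taming it. This is a genuine gap, not a detail to be filled in.

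What the paper does instead is structural rather than combinatorial: it transfers the labeling of $Y$ onto the copy $f[X]$. Starting from a $P$-matrix $\{\mathcal{P}_\alpha\colon\alpha<\omega_1\}$ of $Y$ and its $\omega$-thin labeling $E$, it recursively selects, inside each relevant intersection $L_V$, a maximal pairwise disjoint family $R_V$ of members of $\bigcup_\alpha\mathcal{P}_\alpha$ whose $E$-labels lie in $f[X]$ and which cover $f[X]\cap L_V$; the traces of these families on $f[X]$ form a $P$-matrix of $f[X]$ equipped with a labeling $E^*$. Then a cardinality dichotomy finishes the proof: if every $R_V$ has cardinality $\omega_1$, then $f[X]$ itself has an $\omega_1$-thin labeling, while if some $R_V$ is countable, then $f[X]$ has a non-empty relatively clopen subset with an $\omega$-thin labeling; in either case Theorem \ref{thm:23} and Corollary \ref{cor:25} produce two disjoint homeomorphic clopen pieces of $X$, and the swap homeomorphism contradicts rigidity. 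In other words, the repetition of homeomorphism type that you hoped to extract by pressing down is obtained in the paper from condition $(*)$ of the thin labeling itself, which is the only place the hypothesis on $Y$ enters with full force.
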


\begin{proof}
It suffices to show that, if there is an embedding of $X$ into a $P$-space $Y$, which has an $\omega$-thin labeling, then $X$ contains a clopen subset, which has an $\omega$-thin labeling.
Indeed, if $f\colon X\to Y$ is an embedding, then the image $f[X]$ has a $P$-matrix, which is defined as follows.
Assume that $X=\{x_\alpha\colon\alpha<\omega_1\}$ and let $\{\calP_\alpha\colon\alpha<\omega_1\}$ be a  $P$-matrix for $Y$, and let $E\colon \bigcup\{\calP_\alpha\colon\alpha<\omega_1\}\to Y$ be an $\omega$-thin labeling.
Choose a maximal and pairwise disjoint family $\calR_0\subseteq\bigcup\{\calP_\alpha\colon\alpha<\omega_1\}$ such that $f[X]\subseteq \bigcup\calR_0$ and  if $V\in\calR_0$, then $E(V)\in f[X]$ and also there exists $V\in\calR_0$ such that $E(V)=f(x_0)$.
Suppose that families $\{\calR_\beta\colon\beta<\alpha\}$ are defined.
If $V\in\bigcup\{\calR_\beta\colon\beta<\alpha\}$, then let
$$L_V=\bigcap\{W\in\bigcup\{\calR_\beta\colon\beta<\alpha\}\colon E(W)=E(V)\}$$
and then choose a family $R_V\subseteq\bigcup\{\calP_\beta\colon\beta<\omega_1\}$ such that
\begin{itemize}
\item $R_V$ consists of pairwise disjoint sets;
\item $R_V$ is of the maximal possible cardinality, i.e. $\omega_1$ or  $\omega$;
  \item $f[X]\cap L_V\subseteq\bigcup R_V\subseteq L_V$;
\item If $f(x_\alpha)\in V$, then there exists $W\in\calR_V$ such that $E(W)=f(x_\alpha)$;
  \item If $W\in R_V$, then $E(W)\in f[X]$.
    \end{itemize}
    Let $\calR_\alpha$ be the union of above defined families $R_V$, here we assume that if $L_V=L_W$, then $R_V=R_W$.
    For any $\alpha<\omega_1$, put
    $$\calQ_\alpha=\{V\cap f[X]\colon V\in\calR_\alpha\}.$$
    Thus $\{\calQ_\alpha\colon\alpha<\omega_1\}$ constitute a $P$-matrix for $f[X]\subseteq Y$  such that if $E^*(V\cap f[X])=E(V)$, then
    $$E^*\colon\bigcup\{\calQ_\alpha\colon\alpha<\omega_1\}\colon \to f[X]$$
    is a labeling.
    If all families $R_V$ are of cardinality $\omega_1$, then $f[X]$ has an $\omega_1$-thin labeling; a contradiction.
If there exists $R_V$ of cardinality $\omega$, then $f[X]$ contains the clopen subset (in the topology inherited on $f[X]$), which has an $\omega$-thin labeling; again a contradiction.
\end{proof}

\end{document}